\documentclass[11pt,reqno,a4paper]{amsart}
\usepackage[T1]{fontenc}
\usepackage{lmodern,amssymb,mathtools}
\usepackage{microtype,xcolor}
\usepackage[colorlinks=true,linkcolor=blue,citecolor=blue,
 urlcolor=blue,bookmarksnumbered=true]{hyperref}
\hypersetup{
 pdftitle={Singular Yamabe metrics on complements of closed sets},
 pdfauthor={Weiyang Li and Heng Zhang},
 pdfsubject={A construction by uniform finite point gluing},
 pdfkeywords={singular Yamabe problem, Delaunay ends, Hausdorff dimension,
 Newtonian potentials, gluing}}
\numberwithin{equation}{section}
\newtheorem{theorem}{Theorem}[section]
\newtheorem{proposition}[theorem]{Proposition}
\newtheorem{lemma}[theorem]{Lemma}
\newtheorem{question}[theorem]{Question}

\theoremstyle{definition}

\theoremstyle{remark}

\newcommand{\R}{\mathbb R}
\newcommand{\Sph}{\mathbb S}
\newcommand{\dd}{\mathrm d}
\newcommand{\Hh}{\mathcal H}

\newcommand{\cM}{\mathcal M}
\newcommand{\cE}{\mathcal E}
\newcommand{\cB}{\mathcal B}
\newcommand{\norm}[1]{\left\lVert#1\right\rVert}

\DeclareMathOperator{\dist}{dist}
\DeclareMathOperator{\diam}{diam}
\DeclareMathOperator{\supp}{supp}

\title[Singular Yamabe metrics]{Singular Yamabe metrics on complements of closed sets}
\author[W. Li and H. Zhang]{Weiyang Li and Heng Zhang}
\address{School of Mathematical Sciences, University of Science and Technology of China, Hefei, China}
\email[Weiyang Li]{liweiyang@ustc.edu.cn}
\address{School of Mathematical Sciences, University of Science and Technology of China, Hefei, China}
\email[Heng Zhang]{hengz@mail.ustc.edu.cn}
\keywords{Singular Yamabe problem, Hausdorff dimension, Newtonian potentials, Elliptic gluing}
\subjclass[2020]{Primary 53C21; Secondary 35J60, 35J75}
\date{}

\begin{document}
\begin{abstract}
Let $n\geq3$ and let $\Lambda\subset\Sph^n$ be closed, contain at least
two points, and satisfy $\dim_{\mathrm H}\Lambda<(n-2)/2$.
We prove that there exists a complete
metric of scalar curvature $n(n-1)$ on $\Sph^n\setminus\Lambda$,
conformal to the round metric. 
\end{abstract}
\maketitle
\section{Introduction}
\subsection{Question and main result}
The Yamabe problem asks whether every conformal class on a closed, connected
Riemannian manifold of dimension at least three contains a metric of
constant scalar curvature. Yamabe proposed a solution in 1960
\cite{Yamabe1960}. In 1968,
Trudinger found a gap in the compactness argument and
proved the result under a restriction on the conformal variational
constant \cite{Trudinger1968}. Eliasson gave another proof with a more
precise sufficient hypothesis \cite{Eliasson1971}. These arguments
settled an important range of conformal classes, but not the general
problem.

Aubin established the key comparison with the round sphere. A
minimizer exists when the Yamabe constant is strictly less than that
of the sphere, and he proved this strict inequality in dimension
$n\geq6$ when the manifold is not locally conformally flat
\cite{Aubin1976}. Schoen completed the argument in 1984
\cite{Schoen1984}. His method uses the Green function of the conformal
Laplacian and a global positive mass theorem, in contrast to Aubin's
local test functions. The positive mass and positive action results of Schoen and Yau provided essential foundation \cite{SY1979mass,SY1979action}; the locally conformally flat higher-dimensional case is discussed in \cite{SY1988}.
Thus every such conformal class has a constant scalar curvature
representative. Lee and Parker give a detailed account of the proof
and its history \cite{LeeParker1987}.

On a noncompact manifold, the corresponding geometric problem also
requires the conformal metric to be complete. This additional
requirement is essential: Jin constructed a complete metric whose
conformal class contains no complete metric of constant scalar
curvature \cite{Jin1988}. We study the positive singular Yamabe
problem on domains in the round sphere, where the conformal geometry
is fixed and the question is whether the removed set can be placed
at infinite metric distance while the scalar curvature is a positive
constant.

Write $g_{\mathrm{rd}}$ for the round metric of sectional curvature
one on $\Sph^n$, with $n\geq3$. Yau posed the following interesting question in the problem section of \emph{Seminar on Differential Geometry}, attributing it to Schoen \cite[Problem 32, p. 677]{Yau1982}.

\begin{question}\label{question:yau}
If $\Lambda\subset\Sph^n$ is closed, contains at least two points, and
satisfies
\begin{equation}\label{eq:dimension}
 \dim_{\mathrm H}\Lambda<\frac{n-2}{2},
\end{equation}
does $\Sph^n\setminus\Lambda$ admit a complete metric conformal to
$g_{\mathrm{rd}}$ with scalar curvature $n(n-1)$?
\end{question}

We give an affirmative answer, together with a global distributional
formulation of the solution. Set
\begin{equation}\label{eq:constants}
\begin{aligned}
 a&=\frac{n-2}{2},& p&=\frac{n+2}{n-2}=1+\frac2a,\\
 q&=p-1,& \kappa&=\frac{n(n-2)}4=a(a+1).
\end{aligned}
\end{equation}
We use the sign convention $\Delta=\sum_j\partial_j^2$ in Euclidean
coordinates. For $g=u^{2/a}g_{\mathrm{rd}}$, the equation
$R(g)=n(n-1)$ is
\begin{equation}\label{eq:round}
 \mathcal A_{\mathrm{rd}}u=\kappa u^p,\qquad \mathcal A_{\mathrm{rd}}=-\Delta_{g_{\mathrm{rd}}}+\kappa.
\end{equation}

\begin{theorem}\label{thm:main}
Let $n\geq3$, and let $\Lambda\subset\Sph^n$ be closed, contain at
least two points, and satisfy \eqref{eq:dimension}. Then
$\Omega=\Sph^n\setminus\Lambda$ admits a complete metric
$g=u^{2/a}g_{\mathrm{rd}}$ of scalar curvature $n(n-1)$, with
$u\in C^\infty(\Omega)$ positive. Moreover, $u\in L^p(\Sph^n)$ and
$\mathcal A_{\mathrm{rd}}u=\kappa u^p$ on the whole sphere in the sense of distributions.
\end{theorem}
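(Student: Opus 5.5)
We will construct $u$ as a limit of solutions singular at finite sets, following the metadata hint "uniform finite point gluing". First choose a nested sequence of finite sets $\Lambda_1\subset\Lambda_2\subset\cdots\subset\Lambda$, each with at least two points, such that $\Lambda_k\to\Lambda$ in the Hausdorff sense. Since $\dim_{\mathrm H}\Lambda<a$, fix $s$ with $\dim_{\mathrm H}\Lambda<s<a$. Then for every $\varepsilon>0$ there are coverings of $\Lambda$ by balls $B(x_i,r_i)$ with $\sum r_i^{s}<\varepsilon$. We take $\Lambda_k$ to be centers of a good covering at scale $2^{-k}$, refined dyadically, so that the counting bound $\#\{x\in\Lambda_k:\ \text{points at scale }2^{-j}\}\lesssim 2^{js}$ holds uniformly in $k$.

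For each finite set $\Lambda_k$ we want a solution $u_k$ of \eqref{eq:round} on $\Sph^n\setminus\Lambda_k$ with complete Delaunay ends. By Mazzeo--Pacard type gluing, such a solution is asymptotic near each $x\in\Lambda_k$ to a Delaunay solution with small neck size $\varepsilon_x$. The key is to run the gluing with \emph{uniform} estimates. Work in stereographic coordinates and take as approximate solution the sum of Delaunay-type profiles $\varepsilon_x^{a}\,|y-x|^{-a}$-like bumps. Each bump behaves like $\varepsilon_x^{a} G(\cdot,x)$ away from $x$, where $G$ is the Newtonian potential, and like a Delaunay end near $x$. We choose the neck parameters $\varepsilon_x$ tied to the scale $r_x$ of $x$ in the covering, e.g. $\varepsilon_x^{a}\sim r_x^{a}\,\delta$ with a small $\delta$.

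The interaction of the bumps is then governed by the potential sum $\sum_x \varepsilon_x^{a}|y-x|^{2-n}$. The condition $s<a=(n-2)/2$ is exactly what makes this sum, together with the nonlinearity $u^p$, bounded in weighted spaces and in $L^p$ independently of $k$. Concretely, the sum $\sum_x r_x^{s}$ converges, and $r_x^{a}\le r_x^{s}$ gives room to make the error small.

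Next, solve for a correction $w_k$ by a fixed point in weighted Hölder spaces. Here we use the invertibility of the linearized operator $-\Delta+\kappa-p\kappa\,\bar u^{q}$ about the Delaunay pieces, on weights that exclude the kernel, using the Delaunay parameters as in Mazzeo--Pacard to absorb cokernel obstructions. The main obstacle is proving that the norm of the inverse is bounded uniformly in $k$, even though the number of points grows without bound and the points cluster at all scales. We would first try a barrier/maximum-principle argument on the Newtonian potential. Since $\mathcal A_{\mathrm{rd}}$ is coercive, the sum of Green functions with weights $\varepsilon_x^a$ serves as a supersolution dominating the errors. We would combine this with blow-up at a point and at the scale of a cluster to rule out concentration of the local inverse bounds, using the dyadic counting bound to control how many ends interact at each scale.

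With uniform bounds in hand, we pass to the limit. The $u_k$ are uniformly bounded in $C^{2,\alpha}_{\mathrm{loc}}(\Omega)$, since on compacts of $\Omega$ they are bounded by the potential sum. They are also bounded in $L^p(\Sph^n)$, because $\int|y-x|^{-(a+2)}$ over small balls is integrable with a factor $r_x^{n-a-2}=r_x^{a}$ that is summable. Hence a subsequence converges to a smooth $u>0$ solving \eqref{eq:round} on $\Omega$. Positivity follows from a uniform lower bound by the Green-function term of a fixed point. The distributional identity on $\Sph^n$ follows by testing against $\varphi\in C^\infty(\Sph^n)$. Each $u_k$ satisfies it, since isolated singularities with $u_k\in L^p$ are removable distributionally. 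Uniform integrability of $u_k^p$ then gives the limit.

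Completeness is where the Delaunay lower bound is used. Near every $x\in\Lambda$, the limit satisfies $u(y)\gtrsim\dist(y,\Lambda)^{-a}$ along some curve family, and in fact everywhere near $\Lambda$. This holds because each approximant has Delaunay ends at points $\delta$-dense in $\Lambda$ at scale $2^{-k}$, with necks not degenerating. Therefore $\int u^{1/a}\,\dd s\ge\int \dd t/t=\infty$ along any curve reaching $\Lambda$. Proving this lower bound uniformly is the second delicate point, and we would obtain it from Harnack estimates on the Delaunay regions plus the fixed-point smallness.
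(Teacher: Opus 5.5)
There is a genuine gap, and it is the step you yourself call the main obstacle: a one-shot gluing on each $\Lambda_k$ with inverse bounds uniform in $k$. You give no mechanism for it, and the suggested one does not apply. The operator to invert is the Jacobi operator $\mathcal A_{\mathrm{rd}}-p\kappa\bar u^{q}$, not $\mathcal A_{\mathrm{rd}}$. Its potential reaches size $|x-x_0|^{-2}$ along every end, and every end carries approximate Jacobi fields (neck, phase, special conformal). A Green-function barrier built from the coercivity of $\mathcal A_{\mathrm{rd}}$ therefore says nothing about it. The resulting obstructions must be removed by a parameter system with an unbounded number of unknowns.

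Two further problems compound this. The coarse points carry Newtonian coefficients that are not small, so there is no small parameter at the top scales. Your nested dyadic covers are also not disjoint packed families across levels: a finer center typically lies in the ball of a coarser one.

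The paper never needs this kind of uniformity, because it iterates Theorem~\ref{thm:insertion}:
\begin{itemize}
\item Each generation is a disjoint packed family from Lemma~\ref{pot:cover}, inserted into the previous exact, marked nondegenerate solution $u_{j-1}$.
\item Only $\eta$ and the constant in \eqref{eq:core-lower} are universal. Interactions among the new points are controlled by packing and relative Newtonian potential estimates.
\item Every constant depending on $u_{j-1}$ is absorbed by choosing the new $\max_i r_i$ and $Q$ small afterwards. This also allows arbitrary scale separation between generations.
\item Preservation of marked nondegeneracy makes $u_j$ an admissible background for the next step.
\item The budgets \eqref{eq:absolute-budget}--\eqref{eq:relative-budget} give convergence of the whole sequence directly, with no compactness argument.
\end{itemize}
Your compactness step is unsupported for the same reason. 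At the critical exponent there are no interior a priori bounds, because bubbles can concentrate. The local $C^{2,\alpha}$ bounds would therefore have to come from the missing uniform construction.

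The completeness argument also fails as stated. A uniform bound $u\gtrsim\dist(\cdot,\Lambda)^{-a}$ near $\Lambda$ is not available, and it contradicts your own scaling $\varepsilon_x\sim\delta^{1/a}r_x\to0$. Necks do degenerate: in the paper $\varepsilon_i^2\asymp A_iB_i$, which tends to zero with $r_i$ when $B_i$ stays bounded. At a neck the conformal factor is only about $\varepsilon|x-y|^{-a}$.

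What holds uniformly is the shell bound \eqref{eq:shell-lower}, which comes from the bubble tail. The paper uses it as follows:
\begin{itemize}
\item It builds Lipschitz cutoffs $f_j$ whose transition sets $W_j$ are pairwise disjoint.
\item On $W_j$ the $g_j$-gradients of the $f_j$ are at most $C/\eta$.
\item This bound survives in the limit, because $u\ge\frac34u_j$ on $W_j$ by the relative budget.
\item Every divergent curve must cross every $W_j$, paying length at least $c\eta$ each time.
\end{itemize}

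Finally, your $L^p$ bound sums single-point profiles, which needs pointwise upper bounds near clusters that you do not have. The paper instead uses the construction-independent Morrey bound $\int_{B(x,r)}u_j^p\le C_nr^a$, from Mitidieri--Pohozaev test functions, together with $\Hh^a(\Lambda)=0$ (Proposition~\ref{prop:no-defect}).
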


\subsection{The dimension threshold and previous results}

The two restrictions on $\Lambda$ have different origins. The
exclusion of a one-point singular set is necessary: stereographic
projection would otherwise produce a positive entire solution of the
critical Euclidean equation. Its classification gives a standard
bubble, whose conformal metric is incomplete at infinity
\cite{CGS1989}. For the empty singular set, the round metric provides
a solution.

The dimension restriction is suggested by the flat model for the
complement of a smooth $k$-dimensional singular set. In coordinates
$(y,r,\omega)\in\R^k\times(0,\infty)\times\Sph^{n-k-1}$, the metric
\begin{equation}\label{eq:intro-product}
 r^{-2}\bigl(|\dd y|^2+\dd r^2+r^2g_{\Sph^{n-k-1}}\bigr)
 =\frac{|\dd y|^2+\dd r^2}{r^2}+g_{\Sph^{n-k-1}}
\end{equation}
is the product of hyperbolic $(k+1)$-space and the unit
$(n-k-1)$-sphere. It dominates $\dd r^2/r^2$, so $r=0$ lies at
infinite distance. Its scalar curvature is
\begin{equation}\label{eq:intro-scalar}
 -k(k+1)+(n-k-1)(n-k-2)=(n-1)(n-2-2k),
\end{equation}
which is positive when $k<(n-2)/2$. This model is the basis for the smooth singular set constructions of Mazzeo--Smale and Mazzeo--Pacard \cite{MazzeoSmale1991,MP1996}.

The same threshold occurs in a necessary condition valid without
smoothness assumptions on the boundary. Schoen and Yau
\cite[Theorem~2.7]{SY1988} proved that a domain
$\Omega\subset\Sph^n$ admitting a complete metric conformal to the
round metric, with scalar curvature bounded below by a positive
constant, must satisfy
\begin{equation}\label{eq:intro-necessary}
 \dim_{\mathrm H}\partial\Omega\leq\frac{n-2}{2}.
\end{equation}
For nonnegative scalar curvature, the estimate in the original
theorem of Schoen and Yau is
$\dim_{\mathrm H}\partial\Omega\leq n/2$.
For subsequent refinements, see
\cite[Theorem~C]{Carron2012} and
\cite[Theorem~1.3]{MaQing2025}.
In particular, Ma and Qing proved that a conformal metric
with nonnegative scalar curvature, geodesically complete
near a compact singular set $S$ in a smooth Riemannian
background, satisfies
$\dim_{\mathrm H}S\leq(n-2)/2$.
Since $\Lambda$ is closed and has Hausdorff dimension less than $n$,
it has empty interior and $\partial\Omega=\Lambda$.
Theorem~\ref{thm:main} shows that the strict Hausdorff dimension
bound $\dim_{\mathrm H}\Lambda<(n-2)/2$ is sufficient for
existence, provided that $\Lambda$ contains at least two points.

The equality case is not covered by our theorem. Although the model
\eqref{eq:intro-product} is scalar flat when $k=(n-2)/2$, complete
positive scalar curvature metrics can still exist at this dimension.
For even $n\geq4$, Pacard constructed such metrics for finite disjoint
unions of compact smooth submanifolds without boundary of dimension
$(n-2)/2$ \cite{Pacard1995}. Chan and DelaTorre later gave an analytic construction of
local singular Yamabe metrics near smooth submanifolds of
dimension $(n-2)/2$ \cite{ChanDelaTorre2020}.

The corresponding existence theories for other signs of curvature
have a different form. The negative case originates in
Loewner--Nirenberg \cite{LoewnerNirenberg1974}; on compact backgrounds,
the smooth set dimension range is $k>(n-2)/2$
\cite{AvilesMcOwen1988,KatoNayatani1993}. For general closed sets,
Labutin's criterion is a Wiener-type capacity condition at each
boundary point \cite{Labutin2005}. Scalar flat metrics on smooth
complements were constructed in
\cite{MaMcOwen1992,KatoNayatani1993}. For general sphere domains,
Karakhanyan \cite[Theorem~1.1]{Karakhanyan2024} characterized
scalar flat existence by $\mathcal C_{1+2/n,n/2}(\Lambda)=0$, where
$\mathcal C$ denotes Bessel capacity. These criteria concern
different equations and do not yield a positive curvature existence
theorem under \eqref{eq:dimension}.

Earlier existence results for metrics of constant positive scalar
curvature cover several classes of singular sets of Hausdorff
dimension less than $(n-2)/2$. Schoen constructed such metrics
for every finite set of at least two points, as well as certain
infinite singular sets \cite{Schoen1988}. Connected-sum
constructions give further infinite examples under specific
geometric arrangement hypotheses \cite{MPU1995}. For smooth
singular sets in this dimension range, Mazzeo and Smale treated
perturbations of suitable spheres \cite{MazzeoSmale1991}, and
Mazzeo and Pacard constructed metrics for arbitrary
positive-dimensional components \cite{MP1996}. Their finite point
and mixed dimensional constructions appear in \cite{MP1999}.
Related constructions with singular sets given by smooth
submanifolds with boundary were obtained by Fakhi
\cite{Fakhi2003}.
Within the dimension range \eqref{eq:dimension},
Theorem~\ref{thm:main} gives existence for every closed set
containing at least two points, without smoothness or
geometric arrangement assumptions.

A positive global weak solution of the Yamabe equation need not
induce a complete metric on the complement of its singular set.
In dimensions $n=4$ and $6$, Pacard constructed positive weak
solutions whose singular sets have Hausdorff dimension greater
than $(n-2)/2$ \cite{Pacard1994}. The corresponding conformal
metrics are necessarily incomplete by \eqref{eq:intro-necessary}.
For related constructions of weak solutions with prescribed
singular sets, see \cite{ChenLin1999}. Theorem~\ref{thm:main}
gives a complete metric on $\Sph^n\setminus\Lambda$ whose conformal
factor also satisfies \eqref{eq:round} on the whole sphere in
the sense of distributions.

\subsection{Proof strategy}

We start with a noncylindrical Delaunay metric, that is a complete
rotationally symmetric model of scalar curvature $n(n-1)$ on the
sphere minus two points. A conformal transformation places these
punctures at two chosen points of $\Lambda$. We then insert finite
families of new singular points in $\Lambda$, obtaining complete
metrics $g_j=u_j^{2/a}g_{\mathrm{rd}}$ at each step. Finitely many
insertions suffice when $\Lambda$ is finite. For infinite $\Lambda$,
we must control the limit as well as the individual steps.

The dimension hypothesis enters through the choice of covers.
Choose $\dim_{\mathrm H}\Lambda<\beta<a$, so that
$\Hh^\beta(\Lambda)=0$, where $\Hh^\beta$ denotes $\beta$-dimensional
Hausdorff measure. Outside small neighborhoods of the existing
punctures, the remaining part of $\Lambda$ is compact. In a fixed
stereographic chart, Lemma~\ref{pot:cover} provides disjoint
Euclidean balls $B(y_i,r_i)$ centered on this remainder, whose
fivefold enlargements $B(y_i,5r_i)$ cover it. Both the maximal radius $\max_i r_i$ and the sum
$\sum_i r_i^\beta$ can be made arbitrarily small.
They satisfy the following uniform $\beta$-packing bound:
$$
 \sum_{\substack{y_i\in B(x,R)\\ r_i\le R}} r_i^\beta
 \le C R^\beta
 \qquad (x\in\R^n,\ R>0),
$$
where $C$ depends only on $n$ and $\beta$.
This bound is a property of the chosen family of balls.

For a fixed $\eta>0$, set $\lambda_i=\eta r_i$.
The standard bubble
$\left(2\lambda_i/(\lambda_i^2+|x-y_i|^2)\right)^a$ has leading term
$(2\lambda_i)^a|x-y_i|^{2-n}$ far from its center.
The sum $Q$ of these Newtonian coefficients therefore satisfies
\begin{equation}\label{pot:charge}
 Q:=\sum_i(2\eta r_i)^a
 \leq (2\eta)^a\bigl(\max_i r_i\bigr)^{a-\beta}
                \sum_i r_i^\beta.
\end{equation}
Thus the radii and $Q$ can be made arbitrarily small while $\eta$
and the packing constant remain fixed.

The key analytic tool is Theorem~\ref{thm:insertion}. Under a nondegeneracy hypothesis
on the current finite step metric, it inserts complete Delaunay
ends for these packed families and allows arbitrarily small perturbations on
compact sets separated from the insertion region. The hypothesis,
called marked nondegeneracy, excludes nonzero infinitesimal
conformal deformations solving the linearized equation and
decaying exponentially at every end. The initial metric satisfies
this condition, and gluing preserves it, so each produced metric is an
admissible background for the next insertion.

The crucial point is that the smallness threshold for $\eta$
is independent of the current background. Once $\eta$ is fixed,
the constants in the lower bounds are also independent of the
step, the number of new points, and their radius ratios.
The admissible upper bounds on the radii and $Q$ may instead
depend on the background and the approximation accuracy.
The uniform estimates combine local control of the Delaunay ends
with bounds on interaction errors relative to positive Newtonian
potentials. We can therefore keep $\eta$ fixed and choose summable
perturbations in local smooth norms, with pointwise errors also
summable relative to the preceding conformal factor. These give
a positive smooth limit $u$ satisfying
\eqref{eq:round} on $\Omega$.

Completeness requires more than local smooth convergence.
At step $j$, we construct a Lipschitz cutoff $0\leq f_j\leq1$
that equals one near $\Lambda$ and is supported in a neighborhood
shrinking to $\Lambda$. The transition regions
$W_j=\{1/4\leq f_j\leq3/4\}\Subset\Omega$ are pairwise disjoint.
The lower bounds from the gluing construction give the metric
gradient estimate $|\dd f_j|_{g_j}\leq C/\eta$ almost everywhere
on $W_j$. Summable relative perturbations on the earlier regions
preserve this bound, up to a fixed factor, for
$g=u^{2/a}g_{\mathrm{rd}}$. Any curve that eventually leaves every
compact subset of $\Omega$ starts outside $\supp f_j$ for all
sufficiently large $j$ and eventually reaches the region where
$f_j=1$. It must therefore cross each such $W_j$ from level $1/4$
to level $3/4$. Each crossing has $g$-length at least $c\eta$,
with $c>0$ independent of $j$. The disjointness of the regions
then forces the curve to have infinite length.

Finally, each $u_j$ belongs to $L^p(\Sph^n)$ and already satisfies
\eqref{eq:round} globally in distributions.
Proposition~\ref{prop:no-defect} gives the uniform estimate
$$
 \int_{B_{g_{\mathrm{rd}}}(x,r)}u_j^p\,\dd V_{g_{\mathrm{rd}}}
 \leq C_n r^a,\qquad 0<r\leq r_0,
$$
for round balls, with $C_n,r_0>0$ independent of $j$ and $x$. Since
$\Hh^a(\Lambda)=0$, covers of $\Lambda$ with arbitrarily small
$\sum_i r_i^a$ make the integrals of $u_j^p$ uniformly small
on a neighborhood of $\Lambda$. Together with local smooth
convergence away from $\Lambda$, this gives
$u_j^p\to u^p$ in $L^1(\Sph^n)$ and $u_j\to u$ in
$L^p(\Sph^n)$. We can therefore pass to the global distributional
equation.

\subsection{Organization of paper}

Section~\ref{sec:finite-gluing} introduces the uniform finite point gluing theorem,
and establishes the covering and Newtonian potential estimates.
Section~\ref{in:section} constructs the local Delaunay families,
and Section~\ref{old:section} develops the boundary operators
and inverse estimates for the old ends.
Section~\ref{sec:nonlinear-matching} solves the nonlinear exterior
problem and matches the interior and exterior families, proving
Theorem~\ref{thm:insertion} and preservation of marked
nondegeneracy. Section~\ref{sec:limit} carries out the iteration
and proves Theorem~\ref{thm:main}, including completeness and
the global distributional equation.

\section{Finite point gluing and potential estimates}
\label{sec:finite-gluing}

\subsection{Finite-ended solutions and nondegeneracy}
\label{subsec:finite-backgrounds}

We formulate the gluing result in stereographic coordinates.
Write $g_{\mathrm{rd}}=\vartheta^{2/a}|\dd x|^2$, where
$\vartheta(x)=(2/(1+|x|^2))^a$. The flat conformal factor
$U=\vartheta u$ then satisfies
\begin{equation}\label{eq:flat}
 \Delta U+\kappa U^p=0,\qquad
 g=U^{2/a}|\dd x|^2.
\end{equation}

Let $F\subset\Sph^n$ be finite. We consider positive solutions
$u$ of \eqref{eq:round} on $\Sph^n\setminus F$ such that the
conformal metric $g=u^{2/a}g_{\mathrm{rd}}$ is complete and is
asymptotic to a noncylindrical Delaunay metric at each puncture.
For brevity, we call such a solution a \emph{finite-ended solution}.
Each puncture corresponds to one end of $(\Sph^n\setminus F,g)$.
Near a puncture taken as the origin, write $r=|x|$ and
$t=-\log r$. The Delaunay condition means that
$$
 r^aU(r,\omega)=v_\varepsilon(t+\tau)+R(t,\omega),
$$
where $v_\varepsilon$ is a positive nonconstant periodic
solution of $v''-a^2v+\kappa v^p=0$, with minimum
$\varepsilon>0$, which we call the neck parameter.
The parameter $\tau$ denotes the phase. The remainder
$R$ and its cylindrical derivatives of order at most two
decay exponentially as $t\to\infty$
\cite{MP1999,MPU1996}.

Such a solution is called \emph{marked nondegenerate} if
the only global solution of
$$
 (\Delta_g+n)j=0
$$
that decays exponentially at every end is zero
\cite[Definition~3]{MP1999}. We impose this condition only
at finite steps.

\subsection{Uniform finite point gluing}
\label{subsec:gluing-statement}

Fix a packing exponent $\beta\in(0,a)$ and a constant
$C_{\mathrm{pack}}>0$. Also fix the auxiliary H\"older
exponent $\theta\in(0,1)$ and weights
$1<\mu_1<\mu_2<\mu_3<2$ used in Section~\ref{in:section}.
Constants depending only on
$n,\beta,C_{\mathrm{pack}},\theta,\mu_1,\mu_2,\mu_3$
will be called universal.

Let $u_0$ be a marked nondegenerate finite-ended solution
with singular set $F$, and set $U_0=\vartheta u_0$.
We refer to $u_0$, or to $U_0$ in flat coordinates, as the
background solution for the current gluing step. Choose a stereographic
pole outside $F$ and a compact core $D$, obtained by
truncating the old ends and removing a small regular cap
about the pole. Fix a compact set $K\Subset D$ of permitted
new centers. Consider finite families of pairwise disjoint
balls $B(y_i,r_i)$, with $y_i\in K$, satisfying
\begin{equation}\label{eq:packing}
 \sum_{\substack{y_i\in B(x,R)\\r_i\leq R}}r_i^\beta
 \leq C_{\mathrm{pack}}R^\beta
 \qquad(x\in\R^n,\ R>0).
\end{equation}
For $\eta>0$, set
\begin{equation}\label{eq:scales}
 \lambda_i=\eta r_i,\qquad
 \rho_i=\eta^{2/(n+2)}r_i,\qquad
 A_i=(2\lambda_i)^a,\qquad
 Q=\sum_i A_i.
\end{equation}
Here $\lambda_i$ is the bubble scale and $\rho_i$ is the
matching radius. The coefficient $A_i$ is the coefficient
of $|x|^{2-n}$ in the tail of the standard bubble
$b_{\lambda_i}(x)
=(2\lambda_i/(\lambda_i^2+|x|^2))^a$.

\begin{theorem}[Uniform finite point gluing]\label{thm:insertion}
There exists a universal $0<\eta_*\leq1$ with the following
property. For each fixed $0<\eta\leq\eta_*$ and each choice of data $(U_0,D,K)$ as above, every such family with sufficiently small
$\max_i r_i$ and $Q$ admits a marked nondegenerate finite-ended
solution $U_1$ with singular set $F\cup\{y_i\}$.
On $D\setminus\bigcup_i B(y_i,\rho_i)$,
\begin{equation}\label{eq:core-lower}
 U_1\geq c\left(
 U_0+\sum_i\frac{A_i}{(\rho_i^2+|x-y_i|^2)^a}
 \right),
\end{equation}
where $c>0$ depends only on the universal data and $\eta$.
By decreasing the radius and coefficient bounds, $u_1$ can be
made arbitrarily close to $u_0$ in any prescribed finite
collection of $C^m$ norms on compact subsets of
$\Sph^n\setminus(F\cup K)$, while the old neck and phase
parameters remain in any prescribed neighborhood of their
original values.
\end{theorem}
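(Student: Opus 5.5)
The plan is to follow the Mazzeo--Pacard scheme for gluing finitely many Delaunay ends, keeping track of which constants depend on the background and which are universal.

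\emph{Interior models.} At each new center $y_i$, build a one-parameter family of local Delaunay solutions on $B(y_i,\rho_i)\setminus\{y_i\}$. Each has small neck parameter $\varepsilon_i$, is singular at $y_i$, and is given by prescribed high-mode boundary data on $\partial B(y_i,\rho_i)$. On the annulus $\lambda_i\ll|x-y_i|\le\rho_i$ the small-neck Delaunay solution looks like the bubble $b_{\lambda_i}(x-y_i)$. Its Newtonian tail is $A_i|x-y_i|^{2-n}$ with $A_i=(2\eta r_i)^a$, and its regular part is of size $\lambda_i^{-a}\cdot(\lambda_i/\rho_i)^{n}$-type corrections. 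The choice $\rho_i=\eta^{2/(n+2)}r_i$ balances the error of freezing the background $U_0(y_i)$ against the bubble tail. These local families are solved by a fixed point in weighted spaces $r^{\mu}C^{2,\theta}$ with $\mu\in(\mu_1,\mu_3)$, using the Delaunay linearized operator on high modes. That operator is invertible uniformly in $\varepsilon$ after rescaling, so all constants are universal once $\eta\le\eta_*$.

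\emph{Exterior problem.} Use the approximate exterior solution
$$W=U_0+\sum_i A_i G_0(\cdot,y_i)\chi_i,$$
where $G_0$ is the Green function of the linearization $\Delta+\kappa pU_0^{q}$ on the background, corrected by Jacobi fields. Then solve the nonlinear Dirichlet problem on $D\setminus\bigcup_iB(y_i,\rho_i)$ (together with the old ends) with prescribed boundary data. Marked nondegeneracy of $U_0$ gives an inverse for the linearization on weighted spaces with decay $\mu$ at the old ends, modulo the deficiency space spanned by the Jacobi fields (neck, phase and translation parameters). The new small balls are handled by weights $|x-y_i|^{-\mu}$. The essential point is to estimate this inverse uniformly in the number of balls and in their ratios. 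For this, majorize everything by the positive potential
$$\Phi=U_0+\sum_iA_i(\rho_i^2+|x-y_i|^2)^{-a}$$
and prove a maximum-principle barrier estimate $|w|\le C\Phi\cdot\|f/\Phi^{p}\|$. Interaction errors between balls are bounded by sums like $\sum_{j\ne i}A_j|y_i-y_j|^{2-n}$. The packing bound \eqref{eq:packing} with $\beta<a$ controls these by a dyadic decomposition, giving roughly $C\eta^a\sum_k 2^{k(\beta-(n-2))}$, which converges to a universal quantity times $\eta^a$. This is where $\eta_*$ enters: for $\eta\le\eta_*$ the interactions are a small fraction of each bubble's own size. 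Smallness of $Q$ and $\max r_i$ makes the total perturbation on $K$-separated compacts small, and this threshold depends on $(U_0,D,K)$.

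\emph{Matching and conclusions.} Match Cauchy data on each $\partial B(y_i,\rho_i)$. Low modes (constants and linear) are matched by adjusting $\varepsilon_i$ with $\varepsilon_i^{a}\sim A_i$-relation, a translation of $y_i$ of size $o(r_i)$, and a phase. High modes are matched through the Dirichlet-to-Neumann maps, which are invertible in their difference. Solve the resulting system by a contraction in $\ell^\infty$-weighted norms indexed by $i$, again using packing to make the off-diagonal part small. The lower bound \eqref{eq:core-lower} follows from $U_1=W+w$ with $|w|\le\frac12 c\Phi$, together with $W\ge c\Phi$ by positivity of $G_0$ near $y_i$. Closeness in $C^m$ away from $F\cup K$ and control of the old parameters come from the smallness of $Q$ via elliptic estimates. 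Marked nondegeneracy of $U_1$ is shown by contradiction: a sequence of decaying Jacobi fields normalized in a weighted sup norm would converge either to one on $U_0$, which vanishes, or, after rescaling at some $y_i$, to a decaying Jacobi field on a Delaunay or bubble model, which is excluded by mode analysis.

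The main obstacle is the uniform inverse estimate for the exterior linearized operator with arbitrarily many punctures. I would first try the barrier $\Phi^{1-\delta}$ combined with the packing dyadic sum.
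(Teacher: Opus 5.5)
The step you flag as the main obstacle is where the proposal breaks down, and the barrier method you suggest for it cannot work. The exterior operator is $\Delta+\kappa pU_0^{q}+V_{\mathbf p}$ on the perforated core, with Dirichlet data on the new spheres and the old-end boundary relation. On the truncated core, $-\Delta-\kappa pU_0^{q}$ is indefinite. By $L_0(U_0j)=U_0^p(\Delta_{g_{U_0}}+n)j$ its quadratic form is that of $-\Delta_{g_{U_0}}-n$, which is negative on cut-off constants spread over many Delaunay periods; marked nondegeneracy only excludes a decaying kernel. Hence there is no positive supersolution, no comparison principle and no positive Green function. No barrier built from $U_0+h$ or its powers can give your estimate $|w|\le C\Phi\,\|f/\Phi^p\|$. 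Even for the new potential alone a pointwise barrier fails: $-\Delta h=\kappa\delta^{-1}\sum_ih_i^p$, but for packed families $h^p$ is not bounded by $C\delta^{-1}\sum_ih_i^p$ where many small tails overlap, so the interactions can only be controlled after integration.

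The paper replaces the barrier by three ingredients. First, the kernel bound \eqref{old:green-kernel}, $|G_N^0(x,z)|\le C_n|x-z|^{2-n}+C_{\rm old}$, uniform in the number of holes and their radius ratios. It comes from capacitary perturbation of the spectrum, heat-kernel domination for the short-time part of the resolvent, and the spectral gap for the remainder, with no positivity used. Second, Newton potential estimates for the singular part. These are the nonlinear packing inequality $\mathcal I_2(h^p)\le C\delta h$ with $\int h^p\le C\delta Q$ (Proposition~\ref{pot:endpoint}, of Kalton--Verbitsky type, from H\"older's inequality and packing). They also include the 3G bound $\mathcal I_2(h_i^{p-1}W)\le C\delta W$ for every positive Newton potential $W$ (Proposition~\ref{pot:relative}), which applies to $U_0$ because $U_0=\kappa\mathcal I_2(U_0^p)$. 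Third, the bounded kernel remainder is controlled through source masses. This is also what makes the correction vanish with $Q$ away from $K$, and it dictates the weights $h+Q^{1/n}U_0$ and $U_0+H_i$. Your dyadic bound on $\sum_{j\ne i}A_j|y_i-y_j|^{2-n}$ is only the linear interaction estimate of Lemma~\ref{pot:background}. It does not give the second ingredient, which is what the contraction actually requires.

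Two further steps fail as written. Matching the degree-one modes by translating $y_i$ moves the puncture, so the singular set would no longer be $F\cup\{y_i\}$, and in the application the centers must stay in $\Lambda$. The paper absorbs these modes with the special conformal transformations $\mathcal T_{\boldsymbol z}$, which fix the puncture and send the exterior bubble to an exact bubble \eqref{in:bubble-transform}. Relatedly, the neck is governed by $\varepsilon_i^2\asymp A_iB_i$, with $B_i$ the frozen amplitude at $y_i$ including the other bubbles. The neck variation also produces a linearly growing Jacobi field, which is why the inner family needs the three weights $\mu_1<\mu_2<\mu_3$.

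Your compactness proof of marked nondegeneracy is also unsound. A Delaunay end carries decaying Jacobi fields, the special conformal generators $r^{1-a}(av-v')\omega_j$, and bubble limits carry the bounded dilation and translation fields. So blow-up limits are not excluded by mode analysis, and with $N$ unbounded there is no fixed limiting configuration. The paper argues for each fixed configuration instead. By Lemma~\ref{match:inner-kernel}, a marked Jacobi field is, near every new puncture, an inner parameter tangent with $\dot b_i=0$. Exterior uniqueness in Lemma~\ref{ext:family} identifies it on the core with $D_{\mathbf p}E_{\mathbf p}[\dot{\mathbf p}]$. The vanishing normal jump gives $D\cM\,\dot{\mathbf p}=0$, and the invertibility of $D\cM$, already used for existence, forces $\dot{\mathbf p}=0$.
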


For an empty insertion family take $U_1=U_0$; the nontrivial
gluing construction below assumes $Q>0$.

The radius bound can also ensure $B(y_i,10r_i)\Subset D$.
The individual terms in \eqref{eq:core-lower} then give
\begin{equation}\label{eq:shell-lower}
 |x-y_i|\leq10r_i
 \quad\Longrightarrow\quad
 U_1(x)\geq c\eta^a r_i^{-a}
\end{equation}
outside $\bigcup_j B(y_j,5r_j)$, after decreasing $c$
if necessary.

The proof combines Silva Santos's local inverse
\cite{Santos2010} with the Fredholm theory for the old ends
\cite{MPU1996}, within the gluing framework of
\cite{Mazzeo1991,MPU1995,MP1999,Byde2003,Ratzkin2003}.
The uniform estimates control the period drift caused by
neck variation, using a loss of weight, and the interactions
between new singularities, using relative Newtonian potential
bounds. Remainders dependent of background are made small by
reducing the radii and $Q$ after $\eta$ is fixed.

The finite point solutions in \cite{MP1999} are unmarked
nondegenerate, with marked nondegeneracy established for
generic configurations. Here preservation of marked
nondegeneracy is proved in Subsection~\ref{sec:matching}.

\subsection{Covers and Newtonian potentials}\label{pot:section}

Throughout this section, $0<\beta<a$, with $a$, $p$, and $\kappa$
as in \eqref{eq:constants}.  We use the normalized Newtonian
potential
$$
 \mathcal I_2\mu(x)=\gamma_n\int_{\R^n}|x-z|^{2-n}\,\dd\mu(z),
 \qquad \gamma_n=\frac1{(n-2)|\Sph^{n-1}|}.
$$
For a nonnegative density $f$, we write $\mathcal I_2f$ for
$\mathcal I_2(f\,\dd x)$; whenever the potential is locally
integrable, $-\Delta\mathcal I_2f=f$ in distributions.
Background on coverings and Hausdorff measure can be found in
\cite[Chapters~2 and~4]{Mattila1995}, and on potentials and capacity
in \cite[Chapters~2--3]{AdamsHedberg1996}.  Unless specified
otherwise, the constants in the packing and potential estimates
depend only on $n$, $\beta$, and the packing constant.  

\subsubsection{Finite covers with a packing bound}

The following cover result is a finite version of the dyadic
construction in \cite[Lemma~2.2]{Netrusov1994}.

\begin{lemma}\label{pot:cover}
Let $E\subset\R^n$ be compact, with $\Hh^\beta(E)=0$, and let
$\varepsilon,d>0$.  There is a finite family of balls
$B(y_i,r_i)$, with $y_i\in E$, such that:
\begin{enumerate}
\item their closures are pairwise disjoint;
\item $E\subset\bigcup_i B(y_i,5r_i)$;
\item $\max_i r_i<d$ and $\sum_i r_i^\beta<\varepsilon$;
\item for every $x\in\R^n$ and $R>0$,
\begin{equation}\label{pot:packing}
 \sum_{\substack{y_i\in B(x,R)\\r_i\le R}}r_i^\beta
 \le C(n,\beta)R^\beta.
\end{equation}
\end{enumerate}
If $E\subset U$ for an open set $U$, the family can also be chosen
so that $B(y_i,10r_i)\subset U$ for every $i$.
\end{lemma}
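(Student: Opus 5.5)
The plan is to start from a dyadic cover of $E$ with small $\beta$-content and extract from it a family with the packing property by keeping only maximal dyadic cubes. Then we pass to balls by a Vitali selection.

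\emph{Step 1 (dyadic cover).} Since $\Hh^\beta(E)=0$, the dyadic $\beta$-content of $E$ also vanishes. Hence there is a countable family of dyadic cubes $Q_k$ with side lengths $\ell_k<d'$ that cover $E$ and satisfy $\sum_k\ell_k^\beta<\varepsilon'$. Here $d',\varepsilon'$ are small multiples of $d,\varepsilon$, chosen below. Enlarging each cube slightly to an open set and using compactness, we may take the family finite. Discarding every cube contained in another one keeps the cubes pairwise non-overlapping. Further discarding cubes that miss $E$ loses nothing.

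\emph{Step 2 (Netrusov's pruning for packing).} Following \cite[Lemma~2.2]{Netrusov1994}, we replace the family by a minimal-content one. Among all finite families of dyadic cubes covering $E$ that are built from cubes of side $<d'$, and whose cubes are all subcubes of cubes of the initial family, we choose one minimizing $\sum\ell^\beta$. This is a finite optimization, since only finitely many dyadic cubes between the minimal and maximal sizes occur. Call it $\{Q_k\}$.

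Minimality gives the following. For any dyadic cube $P$ of side $\ell(P)<d'$, the cubes $Q_k\subset P$ satisfy $\sum_{Q_k\subset P}\ell_k^\beta\le \ell(P)^\beta$, since otherwise replacing them by $P$ lowers the sum. For a general ball $B(x,R)$ with $R<d'$, we cover it by $2^n$ dyadic cubes of side comparable to $R$. Each $Q_k$ with centre in $B(x,R)$ and side $\le CR$ lies in one of boundedly many such cubes, so the sum is $\le C(n,\beta)R^\beta$. For $R\ge d'$, we use instead the total bound $\sum\ell_k^\beta<\varepsilon'\le (d')^\beta$ once $\varepsilon'$ is small. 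The main obstacle is this minimality bookkeeping: the competitor $P$ must be admissible, which forces the side restriction. I would handle this by allowing $P$ of any side below $d'$ and treating large $R$ via the total content as above.

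\emph{Step 3 (balls, disjointness, enlargement).} For each $Q_k$, pick $z_k\in E\cap \overline{Q_k}$ and set $s_k=\sqrt n\,\ell_k$, so that $Q_k\subset B(z_k,s_k)$. Apply the Vitali $5r$-covering lemma to the finite family $B(z_k,s_k)$. This yields a subfamily $B(y_i,r_i)$ with pairwise disjoint closures whose fivefold enlargements cover $\bigcup_k B(z_k,s_k)\supset E$. Disjoint closures can be ensured by shrinking the radii by a factor $1-\delta$ and using $5$ in place of $5(1-\delta)^{-1}$ after an initial slight shrinking; alternatively, we run Vitali with closed balls and note that the strict inequality is harmless. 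Properties (3) and (4) pass to the subfamily, because $r_i\simeq\ell_k$ with constants depending on $n$: a subfamily only decreases the sums, and the change of radius by a factor $\sqrt n$ changes $C$ by a factor depending on $n,\beta$. Choosing $d'=d/(2\sqrt n)$ and $\varepsilon'=\varepsilon n^{-\beta/2}/2$ gives (3).

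\emph{Step 4 (the final claim).} If $E\subset U$ with $U$ open, compactness gives $\dist(E,U^c)=\delta>0$. Requiring $d<\delta/20$ then gives $B(y_i,10r_i)\subset U$, since $y_i\in E$.
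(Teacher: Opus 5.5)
Your proposal is correct and follows essentially the route the paper indicates. The paper gives no proof beyond calling the lemma a finite version of Netrusov's dyadic construction: a minimal-content dyadic cover with the packing property $\sum_{Q_k\subset P}\ell_k^\beta\le\ell(P)^\beta$, passed to balls by a greedy Vitali selection. The admissibility issue you flag is resolved as you suggest, by minimizing over all dyadic cubes meeting $E$ with sides between the smallest initial side and $d'$, so every $P$ containing some $Q_k$ is an admissible competitor; the large-scale case should then be split on whether the covering dyadic cubes have side at least $d'$ rather than on $R\ge d'$, and there $\varepsilon'\le(d')^\beta$ suffices.
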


\subsubsection{Bubbles and regularized Newtonian potentials}

The normalized bubble centered at $y$ with scale $\lambda$ is
\begin{equation}\label{pot:bubble}
 b_{\lambda,y}(x)
 =\left(\frac{2\lambda}{\lambda^2+|x-y|^2}\right)^a.
\end{equation}
It satisfies $\Delta b_{\lambda,y}+\kappa b_{\lambda,y}^p=0$,
and its asymptotic coefficient is $(2\lambda)^a$:
$$
 b_{\lambda,y}(x)
 =(2\lambda)^a|x-y|^{-2a}
   \bigl(1+O(\lambda^2|x-y|^{-2})\bigr)
 \qquad (|x-y|\to\infty).
$$
Let the pairwise disjoint balls $B(y_i,r_i)$ satisfy
\eqref{pot:packing} with constant $C_{\mathrm{pack}}$.
In the potential estimates below, $\nu$ is an independent parameter;
in the gluing application we take $\nu=\eta^{2/(n+2)}$.
For $0<\eta,\nu\le1$, set
$$
 \lambda_i=\eta r_i,\qquad \rho_i=\nu r_i,\qquad
 A_i=(2\lambda_i)^a,\qquad \delta=(\eta/\nu)^2,
$$
and define
\begin{equation}\label{pot:tails}
 h_i(x)=\frac{A_i}{(\rho_i^2+|x-y_i|^2)^a},
 \qquad h=\sum_i h_i,\qquad H_i=\sum_{j\ne i}h_j.
\end{equation}
These regularized Newtonian potentials obey
$h_i=(\lambda_i/\rho_i)^a b_{\rho_i,y_i}$.

\subsubsection{A nonlinear potential estimate}

The next estimate is a quantitative form, for the present
packing family, of the nonlinear potential condition in
\cite[Theorem~1.2]{KaltonVerbitsky1999}.  We obtain its uniform
$O(\delta)$ constant directly.

\begin{proposition}\label{pot:endpoint}
The potentials in \eqref{pot:tails} satisfy
\begin{equation}\label{pot:endpointineq}
 \mathcal I_2(h^p)\le C\delta h
 \qquad\text{on }\R^n,
\end{equation}
and
\begin{equation}\label{pot:mass}
 \int_{\R^n}h^p\,\dd x\le C\delta Q.
\end{equation}
\end{proposition}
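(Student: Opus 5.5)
The plan is to reduce everything to the exact identity for a single regularized tail, and then control the cross interactions using the packing bound \eqref{pot:packing}.

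\emph{Single-tail identity.} Since $-\Delta b_{\rho,y}=\kappa b_{\rho,y}^p$ and $b_{\rho,y}$ decays like $|x|^{2-n}$, we have $\mathcal I_2(b_{\rho,y}^p)=\kappa^{-1}b_{\rho,y}$. With $h_i=(\lambda_i/\rho_i)^a b_{\rho_i,y_i}$ and $a(p-1)=2$, this gives
$$
\mathcal I_2(h_i^p)=\kappa^{-1}(\lambda_i/\rho_i)^{a(p-1)}h_i=\kappa^{-1}\delta\, h_i .
$$
So the diagonal part of $h^p$ already produces exactly the claimed $O(\delta)$ bound. The whole difficulty is the failure of additivity of $t\mapsto t^p$.

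\emph{Splitting $h^p$.} Write $h^p\le C\sum_i\bigl(h_i^p+h_iH_i^{q}\bigr)\mathbf 1_{\{h_i\ge H_i\}}+C\sum_i h_i\,\mathbf 1_{\{h_i<H_i\}}H_i^{q}$. It therefore suffices to bound $\mathcal I_2\bigl(\sum_i h_i H_i^q\bigr)$ by $C\delta h$. For this I would first prove a pointwise ``off-diagonal'' estimate: for $x$ in the region where $h_i$ is the relevant tail (roughly $|x-y_i|\lesssim$ the distance to the other centers), one has
$$
H_i(x)\le C\eta^a\sum_j\frac{r_j^a}{(r_j+|x-y_j|)^{2a}}\mathbf 1_{j\neq i}.
$$
Summing over dyadic shells $|x-y_j|\sim R$ and using $r_j^a\le r_j^\beta R^{a-\beta}$ together with \eqref{pot:packing} gives a contribution $\lesssim \eta^a R^{-a}$ per shell. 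Here the disjointness of the closed balls ensures that only boundedly many $j$ with $r_j>R$ can have $y_j$ within $R$ of $x$. The shell sum is geometric because $a>\beta$. This yields $H_i\lesssim\eta^a d_i(x)^{-a}$, where $d_i$ is the distance from $x$ to $\bigcup_{j\ne i}B(y_j,r_j)$. Consequently, on $B(y_i,r_i)$ we get $H_i^q\lesssim \eta^2 r_i^{-2}$. Then $\mathcal I_2(h_i\mathbf 1_{B(y_i,r_i)}H_i^q)$ is a potential of mass at most $\eta^2 r_i^{-2}\int_{B(y_i,r_i)}h_i\lesssim \eta^2\nu^{-?}A_i$, which I would compare with $\delta h_i$ using the explicit profile of $h_i$ at scale $\rho_i$.

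\emph{Main obstacle.} The delicate point is the region far from all centers, where no single tail dominates. There I plan to pass to a Wolff-type dyadic bound,
$$
\mathcal I_2(h^p)(x)\lesssim\sum_k 2^{-k(n-2)}\int_{B(x,2^k)}h^p,
$$
and bound $\int_{B(x,R)}h^p$ using the packing estimate again. This is the uniform, packing-adapted version of the Kalton--Verbitsky condition $\mathcal I_2\bigl((\mathcal I_2\mu)^p\bigr)\le C\,\mathcal I_2\mu$, and I expect it to require the most care to keep the constant linear in $\delta$.

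\emph{Mass bound.} Finally, \eqref{pot:mass} follows from \eqref{pot:endpointineq}. Multiply by $|x|^{n-2}$ and let $|x|\to\infty$: the left side tends to $\gamma_n\int h^p$, and $|x|^{n-2}h(x)\to Q$. As a consistency check, directly $\int h_i^p=(\lambda_i/\rho_i)^{ap}\int b_{\rho_i}^p=c\,\delta A_i$.
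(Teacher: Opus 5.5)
Your single-tail identity $\mathcal I_2(h_i^p)=\kappa^{-1}\delta h_i$ and the reduction $h^p\le C\sum_i(h_i^p+h_iH_i^q)$ are both correct. However, the reduction removes nothing where the difficulty lies. Wherever many tails are comparable, $\sum_i h_iH_i^q$ is comparable to $h^p$ itself, so the bound $\mathcal I_2(\sum_ih_iH_i^q)\le C\delta h$ is the whole proposition. For that region you only propose the dyadic rewriting $\sum_k2^{-k(n-2)}\int_{B(x,2^k)}h^p$. That rewriting still needs a bound on $\int_{B(x,R)}h^p$ that is linear in $\delta$ and in the nearby coefficients, and you give no mechanism for it.

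The missing idea, which the paper uses, is a pointwise domination of $h^p$ by single-centre profiles with extra decay. Set $\gamma=(a-\beta)/a$ and $D_j=(\rho_j^2+|x-y_j|^2)^{1/2}$. At a point outside the original balls, put $S=\sum_jr_j^ad_j^{-2a}$ and $T=\sum_jr_j^{a+\gamma}d_j^{-n-\gamma}$. Use the factorization
\[
 r_j^ad_j^{-2a}=\bigl(r_j^{a+\gamma}d_j^{-n-\gamma}\bigr)^{a/(a+1)}\bigl(r_j^\beta d_j^{a-\beta}\bigr)^{1/(a+1)} .
\]
H\"older's inequality and packing bound the part with $d_j\le R$ by $C(RT)^{a/(a+1)}$. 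The shell estimate bounds the rest by $CR^{-a}$. Choosing $R=T^{-1/(a+2)}$ gives $S^p\le CT$, hence
\[
 h^p\le C\eta^{a+2}\nu^{\gamma-2}\sum_jr_j^{a+\gamma}D_j^{-n-\gamma}.
\]
Each profile $D_j^{-n-\gamma}$ decays faster than $|x|^{-n}$, so its Newtonian potential is at most $C\rho_j^{-\gamma}D_j^{-2a}$. Summing gives $C\eta^{a+2}\nu^{-2}\sum_jr_j^aD_j^{-2a}=C\delta h$ at once. The factor $\delta$ is the ratio $\eta^{a+2}/\eta^{a}$ combined with a loss of at most $\nu^{-2}$. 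No separation into diagonal and interaction regions is needed.

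Your localized step is also false as stated. For $x\in B(y_i,r_i)$, disjointness only gives $|x-y_j|\ge r_j$. A much smaller ball $B(y_j,r_j)$ close to $\partial B(y_i,r_i)$ then makes $H_i(x)$ of order $\eta^ar_j^{-a}\gg\eta^ar_i^{-a}$, and your $d_i(x)$ can be arbitrarily small compared with $r_i$. The bound $H_i^q\le C\eta^2r_i^{-2}$ holds only on a concentric sub-ball such as $B(y_i,r_i/2)$. The paper avoids this: a point of $B(y_i,r_i)$ lies outside every other original ball, so the $S^p\le CT$ argument applies to $H_i$ there. For the own term, $D_i\ge\nu r_i$ gives $h_i^p=2^{a+2}\eta^{a+2}r_i^{a+2}D_i^{-n-2}\le C\eta^{a+2}\nu^{\gamma-2}r_i^{a+\gamma}D_i^{-n-\gamma}$.

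Finally, in the mass step you cannot assume that $|x|^{n-2}\mathcal I_2(h^p)(x)$ converges to $\gamma_n\int h^p$ before integrability is known. Fatou's lemma gives the inequality $\gamma_n\int h^p\le C\delta Q$ that you need, and integrability along with it.
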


\begin{proof}
Set $\gamma=(a-\beta)/a\in(0,1)$ and
$D_j(x)=(\rho_j^2+|x-y_j|^2)^{1/2}$.  We first prove
\begin{equation}\label{pot:pointwise}
 h(x)^p\le C\eta^{a+2}\nu^{\gamma-2}
       \sum_j r_j^{a+\gamma}D_j(x)^{-n-\gamma}.
\end{equation}
Suppose $x$ is outside every original ball, so that
$d_j=|x-y_j|\ge r_j$, and write
$$
 S=\sum_j r_j^ad_j^{-2a},\qquad
 T=\sum_j r_j^{a+\gamma}d_j^{-n-\gamma}.
$$
Since $n=2a+2$ and $a\gamma+\beta=a$,
$$
 r_j^ad_j^{-2a}
 =\bigl(r_j^{a+\gamma}d_j^{-n-\gamma}\bigr)^{a/(a+1)}
   \bigl(r_j^\beta d_j^{a-\beta}\bigr)^{1/(a+1)}.
$$
H\"older's inequality and packing give, for $R>0$,
$$
 \sum_{d_j\le R}r_j^ad_j^{-2a}
 \le T^{a/(a+1)}
      \left(\sum_{d_j\le R}r_j^\beta d_j^{a-\beta}\right)^{1/(a+1)}
 \le C(RT)^{a/(a+1)}.
$$
On a shell $2^kR<d_j\le2^{k+1}R$, all $r_j$ are at most
$2^{k+1}R$, and hence
$$
 \sum r_j^a\le(2^{k+1}R)^{a-\beta}\sum r_j^\beta
 \le C(2^kR)^a.
$$
The near--far decomposition, as in \cite{Hedberg1972}, therefore gives
$$
 S\le C(RT)^{a/(a+1)}+CR^{-a}.
$$
If the family is nonempty, take $R=T^{-1/(a+2)}$ to obtain
$S^p\le CT$.  Since $d_j\le D_j\le\sqrt2d_j$ and
$\nu^{\gamma-2}\ge1$, this proves \eqref{pot:pointwise} outside
the original balls.  The empty family is immediate.

If $x\in B(y_i,r_i)$, disjointness permits the same argument
for $H_i$.  For the remaining term, $D_i\ge\nu r_i$ and
$\gamma<2$ imply
$$
 h_i^p
 =2^{a+2}\eta^{a+2}r_i^{a+2}D_i^{-n-2}
 \le C\eta^{a+2}\nu^{\gamma-2}
          r_i^{a+\gamma}D_i^{-n-\gamma}.
$$
Together with $(h_i+H_i)^p\le2^{p-1}(h_i^p+H_i^p)$, this
proves \eqref{pot:pointwise} everywhere.

For $f_\rho(t)=(\rho^2+t^2)^{-(n+\gamma)/2}$, the radial
Newtonian formula bounds $\mathcal I_2f_\rho(D)$ by
$C\rho^{2-n-\gamma}$ for $D\le\rho$, and by
$C\rho^{-\gamma}D^{2-n}$ for $D\ge\rho$. Thus
\begin{equation}\label{pot:convolution}
 \mathcal I_2\bigl((\rho^2+|\cdot-y|^2)^{-(n+\gamma)/2}\bigr)(x)
 \le C\rho^{-\gamma}(\rho^2+|x-y|^2)^{-a}.
\end{equation}
Applying this to \eqref{pot:pointwise}, using $\rho_j=\nu r_j$,
yields
$$
 \mathcal I_2(h^p)
 \le C\eta^{a+2}\nu^{-2}
       \sum_jr_j^a(\rho_j^2+|x-y_j|^2)^{-a}
 =C\delta h.
$$
Multiplying \eqref{pot:endpointineq} by $|x|^{n-2}$,
letting $x$ tend to infinity along a ray, and applying Fatou's
lemma gives $\gamma_n\int h^p\le C\delta Q$, since
$|x|^{n-2}h(x)\to Q$. This proves \eqref{pot:mass}, including
integrability.
\end{proof}

\subsubsection{Relative estimates for positive potentials}

\begin{lemma}\label{pot:positive}
Each $h_i$ is the Newtonian potential of a positive finite measure.
If $V\ge0$ satisfies $\sup_x\mathcal I_2V(x)\le L$ and
$W=\mathcal I_2\mu$ for a positive measure $\mu$, then
\begin{equation}\label{pot:3Goperator}
 \mathcal I_2(VW)(x)\le C_nLW(x)
\end{equation}
wherever $W(x)$ is finite.
\end{lemma}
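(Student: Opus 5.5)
For the first claim I would compute $-\Delta h_i$ explicitly and show it is a positive integrable density. Since $h_i=(\lambda_i/\rho_i)^a b_{\rho_i,y_i}$ and $\Delta b+\kappa b^p=0$, we get
$$
 -\Delta h_i=\kappa(\lambda_i/\rho_i)^a b_{\rho_i,y_i}^p\ge0 .
$$
The bubble satisfies $b^p\sim|x|^{-n-2}$ at infinity, so this density is in $L^1$. Set $\mu_i=-\Delta h_i\,\dd x$. Then $\mathcal I_2\mu_i$ and $h_i$ are both harmonic-difference candidates: their difference is harmonic on $\R^n$ and tends to zero at infinity, because both decay like $|x|^{2-n}$. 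By Liouville the difference vanishes, so $h_i=\mathcal I_2\mu_i$ with $\mu_i$ positive and finite.

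For \eqref{pot:3Goperator}, write $W=\mathcal I_2\mu$ and use Tonelli:
$$
 \mathcal I_2(VW)(x)=\gamma_n^2\int\!\!\int |x-z|^{2-n}V(z)|z-w|^{2-n}\,\dd z\,\dd\mu(w).
$$
It therefore suffices to prove the kernel inequality, for each fixed $w$,
$$
 \int |x-z|^{2-n}|z-w|^{2-n}V(z)\,\dd z\le C_nL\,|x-w|^{2-n}.
$$
This is the usual 3G-type step. I would use the elementary bound
$$
 |x-z|^{2-n}|z-w|^{2-n}\le 2^{n-2}|x-w|^{2-n}\bigl(|x-z|^{2-n}+|z-w|^{2-n}\bigr),
$$
which follows from $|x-w|\le|x-z|+|z-w|\le2\max(|x-z|,|z-w|)$. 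One of the two factors is then at least $|x-w|/2$; bounding that factor by $2^{n-2}|x-w|^{2-n}$ leaves the other factor, which is at most the sum.

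Integrating against $V$, the right-hand side becomes
$$
 2^{n-2}\gamma_n^{-1}|x-w|^{2-n}\bigl(\mathcal I_2V(x)+\mathcal I_2V(w)\bigr)\le 2^{n-1}\gamma_n^{-1}L\,|x-w|^{2-n}.
$$
Integrating in $\dd\mu(w)$ gives $\mathcal I_2(VW)(x)\le 2^{n-1}LW(x)$, with the $\gamma_n$ factors balancing. This bound is meaningful wherever $W(x)<\infty$.

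The only delicate points are the measure-theoretic ones. Tonelli applies because everything is nonnegative, and the hypothesis $\sup\mathcal I_2V\le L$ must hold at every point, including $w\in\supp\mu$, which is how it is stated. The main thing to check carefully is that Liouville step for $h_i$, in particular that $\mathcal I_2\mu_i$ is finite everywhere and decays at infinity. Both follow because the density is bounded and $O(|x|^{-n-2})$.
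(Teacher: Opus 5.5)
Your proposal is correct and follows essentially the same route as the paper. The bubble equation gives $-\Delta h_i=\kappa(\lambda_i/\rho_i)^a b_{\rho_i,y_i}^p=\kappa\delta^{-1}h_i^p$, decay at infinity plus Liouville identifies $h_i$ with its Newtonian potential, and the 3G kernel inequality integrated via Tonelli gives \eqref{pot:3Goperator}. The only difference is that you prove the 3G inequality directly by the elementary triangle-inequality argument, which yields $C_n=2^{n-1}$, whereas the paper cites Kalton--Verbitsky for it.
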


\begin{proof}
Using the identity
$h_i=(\lambda_i/\rho_i)^a b_{\rho_i,y_i}$,
the bubble equation, and decay at infinity, we obtain
\begin{equation}\label{pot:taildensity}
 -\Delta h_i=\kappa\delta^{-1}h_i^p>0,
 \qquad h_i=\kappa\delta^{-1}\mathcal I_2(h_i^p),
\end{equation}
with $h_i^p\in L^1(\R^n)$.
For $K(x,z)=\gamma_n|x-z|^{2-n}$, the standard three-kernel
inequality (see \cite[equations~(1.6) and~(3.1)]{KaltonVerbitsky1999}) is
\begin{equation}\label{pot:3G}
 K(x,z)K(z,y)
 \le C_nK(x,y)\bigl(K(x,z)+K(z,y)\bigr).
\end{equation}
Integrating against $V(z)\,\dd z\,\dd\mu(y)$ and using Tonelli's
theorem gives
$$
 \mathcal I_2(VW)(x)
 \le C_n\int K(x,y)
     \bigl(\mathcal I_2V(x)+\mathcal I_2V(y)\bigr)\,\dd\mu(y)
 \le C_nLW(x).
$$
\end{proof}

\begin{proposition}\label{pot:relative}
For each index $i$ and any positive Newtonian potential $W$,
\begin{equation}\label{pot:singlemultiplier}
 \mathcal I_2(h_i^{p-1}W)\le C_n\delta W.
\end{equation}
\end{proposition}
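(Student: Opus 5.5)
The plan is to apply Lemma~\ref{pot:positive} with $V=h_i^{p-1}$ and $L=C_n\delta$. Since $W$ is assumed to be a positive Newtonian potential, the lemma then gives
$$
\mathcal I_2(h_i^{p-1}W)\le C_n\sup_x\mathcal I_2(h_i^{p-1})\,W,
$$
so everything reduces to the scalar bound $\sup_x\mathcal I_2(h_i^{p-1})\le C_n\delta$, with $C_n$ depending only on $n$. Note that $p-1=2/a$ and $a(p-1)=2$.

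To compute the supremum, use scaling. We have
$$
h_i^{p-1}=A_i^{2/a}(\rho_i^2+|x-y_i|^2)^{-2}=4\lambda_i^2(\rho_i^2+|x-y_i|^2)^{-2}.
$$
Translate to put $y_i$ at the origin and substitute $x=\rho_i z$:
$$
\mathcal I_2(h_i^{p-1})(\rho_i z)=4\lambda_i^2\rho_i^{-4}\rho_i^{2}\,\mathcal I_2\bigl((1+|\cdot|^2)^{-2}\bigr)(z)=4\delta\,\mathcal I_2\bigl((1+|\cdot|^2)^{-2}\bigr)(z).
$$
The factor $\rho_i^{2}$ comes from the Jacobian $\rho_i^n$ combined with the kernel's homogeneity $\rho_i^{2-n}$, and $4\lambda_i^2\rho_i^{-2}=4\delta$. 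It therefore suffices to show that $\Phi=\mathcal I_2\bigl((1+|z|^2)^{-2}\bigr)$ is bounded on $\R^n$ by a dimensional constant.

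The density $f(z)=(1+|z|^2)^{-2}$ is radial, bounded by $1$, and decays like $|z|^{-4}$. Since $n\ge3$, the potential $\Phi(0)=\gamma_n\int|z|^{2-n}(1+|z|^2)^{-2}\,\dd z$ is finite: near the origin the integrand behaves like $|z|^{2-n}$, and at infinity like $|z|^{-n-2}$. For a radial, radially decreasing density the Newtonian potential attains its maximum at the origin, because it is superharmonic and radial (equivalently, by the radial Newtonian formula already used for \eqref{pot:convolution}). Alternatively, one can split the integral into the regions $|x-z|\le1$ and $|x-z|>1$ and use $f\le1$ together with $f\in L^{1}$ when $n\ge4$. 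For $n=3$, $f\notin L^1$ fails only logarithmically at worst, so for a uniform bound valid in all dimensions $n\ge3$ I would rather use the radial formula $\Phi(r)\le\Phi(0)$. This gives $\sup\Phi=C_n<\infty$.

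Combining the two steps yields \eqref{pot:singlemultiplier}, with no dependence on $i$, $\eta$, $\nu$, or the packing constant. The only point needing care is the boundedness of $\Phi$, in particular in low dimensions such as $n=3$, where $f$ just fails to be integrable. There the integral $\int|z|^{2-n}f$ still converges, so the radial maximum principle argument handles it cleanly.
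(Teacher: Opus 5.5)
Your proof is correct and is essentially the paper's: apply Lemma~\ref{pot:positive} with $V=h_i^{p-1}$, then bound $\sup_x\mathcal I_2(h_i^{p-1})$ by its value at the center via the radial Newtonian formula, which gives exactly $4\delta\cdot\frac{1}{n-2}\int_0^\infty t(1+t^2)^{-2}\,\dd t=\delta/a$. One correction to your aside: $(1+|z|^2)^{-2}$ is integrable on $\R^n$ only for $n=3$ and fails to be for every $n\ge4$ (logarithmically at $n=4$), so the infinite-mass case occurs in high dimensions rather than at $n=3$, and the near--far splitting alternative works only for $n=3$; your use of the radial formula is what makes the bound valid for all $n\ge3$.
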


\begin{proof}
The radial Newtonian formula gives
\begin{equation}\label{pot:singleKato}
 \sup_x\mathcal I_2(h_i^{p-1})(x)
 =\frac{4\eta^2r_i^2}{n-2}
       \int_0^\infty\frac{t\,\dd t}{(\rho_i^2+t^2)^2}
 =\frac\delta a.
\end{equation}
The integral converges even when $h_i^{p-1}$ has infinite total
mass. Lemma~\ref{pot:positive} now proves
\eqref{pot:singlemultiplier}.
\end{proof}

For $\eta\le\nu$, the true bubbles satisfy
$h_j\le b_{\lambda_j,y_j}\le2^ah_j$ wherever
$|x-y_j|\ge\rho_j$.

\subsubsection{Distributional and Newtonian representations}

\begin{lemma}\label{pot:oldrepresentation}
Let $u_0$ be the positive round conformal factor of a finite-ended
solution with puncture set $F$ and Delaunay asymptotics.  Then
$u_0\in L^p(\Sph^n)$ and
\begin{equation}\label{eq:finite-distribution}
 \mathcal A_{\mathrm{rd}}u_0=\kappa u_0^p\qquad\text{in }\mathcal D'(\Sph^n).
\end{equation}
In particular, the equation has no Dirac mass at a puncture.
If the stereographic pole lies outside $F$, the flat conformal
factor $U_0=\vartheta u_0$ satisfies $U_0^p\in L^1(\R^n)$ and
\begin{equation}\label{pot:oldnewton}
 U_0=\kappa\mathcal I_2(U_0^p)
 \qquad\text{on }\R^n\setminus F.
\end{equation}
Consequently,
\begin{equation}\label{pot:oldmultiplier}
 \mathcal I_2(h_i^{p-1}U_0)\le C_n\delta U_0,
\end{equation}
with a constant independent of $U_0$.
\end{lemma}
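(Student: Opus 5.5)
Near each puncture the Delaunay asymptotics give the size of $u_0$. Place the puncture at the origin of a chart, so $U_0(x)=r^{-a}\bigl(v_\varepsilon(t+\tau)+R\bigr)$ with $v_\varepsilon$ bounded. Then $U_0\le Cr^{-a}$ and $|\nabla U_0|\le Cr^{-a-1}$, the latter because the cylindrical derivatives of $v_\varepsilon+R$ are bounded. Consequently $U_0^p\le Cr^{-a-2}=Cr^{-(n+2)/2}$. Since $(n+2)/2<n$ for $n\ge3$, this shows $U_0^p\in L^1_{\mathrm{loc}}$, and also $U_0\in L^1_{\mathrm{loc}}$. Away from $F$ the function $u_0$ is smooth on the compact sphere, so $u_0\in L^p(\Sph^n)$. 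If the pole is outside $F$, then $U_0=\vartheta u_0$ decays like $|x|^{2-n}$ at infinity, and hence $U_0^p\in L^1(\R^n)$.

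For the distributional equation \eqref{eq:finite-distribution}, I would use a cutoff argument. Let $\phi\in C^\infty(\Sph^n)$ be a test function, and let $\chi_\epsilon$ be a cutoff equal to $0$ on $B(F,\epsilon)$ and $1$ outside $B(F,2\epsilon)$, with $|\nabla\chi_\epsilon|\le C/\epsilon$ and $|\nabla^2\chi_\epsilon|\le C/\epsilon^2$. Testing the smooth equation against $\chi_\epsilon\phi$ gives the desired identity plus two error terms:
\begin{equation*}
\int u_0\bigl(2\nabla\chi_\epsilon\cdot\nabla\phi+\phi\Delta\chi_\epsilon\bigr)
\quad\text{and}\quad
\int\phi\,\chi_\epsilon\text{-free boundary terms.}
\end{equation*}
Rather than integrating by parts twice, I would write both errors as $\int u_0(\ldots)$. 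On the annulus where $\nabla\chi_\epsilon\neq0$, these are bounded by
\begin{equation*}
C\epsilon^{-2}\int_{\epsilon<r<2\epsilon}r^{-a}\,\dd x\le C\epsilon^{n-2-a}=C\epsilon^{a}\to0 .
\end{equation*}
The right-hand side $\int\kappa u_0^p\chi_\epsilon\phi$ converges to $\int\kappa u_0^p\phi$ by dominated convergence. This proves \eqref{eq:finite-distribution}, and in particular rules out a Dirac mass at each puncture: the capacity-type estimate $r^{-a}\ll r^{2-n}$ is exactly what excludes it.

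For \eqref{pot:oldnewton}, I pass to flat coordinates. The conformal covariance of $\mathcal A_{\mathrm{rd}}$ under stereographic projection turns the distributional equation into $-\Delta U_0=\kappa U_0^p$ in $\mathcal D'(\R^n)$. (Alternatively, I can repeat the cutoff argument directly in $\R^n$ with the same estimates.) Set $w=U_0-\kappa\mathcal I_2(U_0^p)$. Since $U_0^p\in L^1$, the potential is locally integrable and tends to $0$ at infinity in an averaged sense, while $U_0\to0$ at infinity. Hence $w$ is a harmonic distribution on $\R^n$, so it is a smooth harmonic function. Moreover $w\ge-\kappa\mathcal I_2(U_0^p)$ and $w\le U_0$, so $w$ tends to $0$ at infinity in the mean. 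By Liouville's theorem (applied via mean values over large balls), $w\equiv0$. The identity then holds pointwise on $\R^n\setminus F$, where both sides are continuous. The potential is continuous there because $U_0^p$ is locally bounded away from $F$ and integrable near $F$.

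Finally, \eqref{pot:oldmultiplier} follows by applying Proposition~\ref{pot:relative} with $W=U_0=\mathcal I_2(\kappa U_0^p\,\dd x)$, which is a positive Newtonian potential. The constant $C_n$ there does not depend on $W$.

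The main obstacle is the removability at the punctures, namely verifying that the cutoff errors vanish. This rests entirely on the Delaunay bound $U_0\le Cr^{-a}$, which follows from the boundedness of $v_\varepsilon$ together with the exponential decay of $R$.
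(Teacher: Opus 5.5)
Your proof is correct, but it handles the punctures differently from the paper.

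\textbf{Removability at the punctures.} The paper never uses the Delaunay upper bound for this step. It cites \cite[Lemma~2.1]{CGS1989}, which applies to any nonnegative $C^2$ solution of $-\Delta U=\kappa U^p$ in a punctured ball. That lemma gives $U^p\in L^1_{\mathrm{loc}}$ and the distributional equation across the puncture, because $p>n/(n-2)$. The underlying reason is that a Dirac mass would force $U\gtrsim|x|^{2-n}$, so $U^p\gtrsim|x|^{-(n+2)}$, which is not integrable.

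You instead read $U_0\le Cr^{-a}$ off the asymptotics. This gives $U_0^p\le Cr^{-a-2}\in L^1_{\mathrm{loc}}$ at once, and your cutoff computation kills the error at rate $\epsilon^{n-2-a}=\epsilon^{a}$. Your route is elementary and self-contained but depends on the Delaunay hypothesis. The paper's route is more robust, needing only positivity and $p>n/(n-2)$.

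A small expository slip: once all derivatives are moved onto $\chi_\epsilon\phi$, the only error is $\int u_0(2\nabla\chi_\epsilon\cdot\nabla\phi+\phi\Delta\chi_\epsilon)$. No ``boundary terms'' arise, since $\chi_\epsilon\phi$ vanishes near $F$.

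\textbf{Newtonian representation.} The paper proves $\mathcal I_2(U_0^p)=O(|x|^{2-n})$ pointwise by a near--far estimate, then applies Liouville to a harmonic function tending to zero. Your mean-value version bounds the ball averages of $U_0+\kappa\mathcal I_2(U_0^p)$ by $CR^{2-n}$, using only $\|U_0^p\|_{L^1}$ and the decay of $U_0$. That works equally well. You also make explicit the upgrade from an a.e.\ identity to pointwise equality on $\R^n\setminus F$, via continuity of the potential there; the paper leaves this implicit.

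The final multiplier estimate is obtained exactly as in the paper.
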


\begin{proof}
In a flat chart about each puncture, the Euclidean conformal factor
is a nonnegative $C^2$ solution of $-\Delta U=\kappa U^p$.
Since $p=(n+2)/(n-2)>n/(n-2)$, \cite[Lemma~2.1]{CGS1989}
gives local $L^p$ integrability and the distributional equation
across the puncture.  Conformal covariance therefore yields
$u_0\in L^p(\Sph^n)$ and \eqref{eq:finite-distribution}.

The stereographic pole is regular, so $U_0=O(|x|^{2-n})$ and
$U_0^p=O(|x|^{-n-2})$ at infinity.  Thus $U_0^p\in L^1(\R^n)$,
and the standard near--far estimate gives
$\mathcal I_2(U_0^p)=O(|x|^{2-n})$.  Conformal covariance also
gives $-\Delta U_0=\kappa U_0^p$ in $\mathcal D'(\R^n)$.
Hence $U_0-\kappa\mathcal I_2(U_0^p)$ is distributionally
harmonic on all of $\R^n$ and tends to zero at infinity.
Weyl's lemma and Liouville's theorem prove \eqref{pot:oldnewton};
\eqref{pot:singlemultiplier} then gives \eqref{pot:oldmultiplier}.
\end{proof}

\subsubsection{Interaction and local amplitude estimates}

\begin{lemma}\label{pot:background}
Packing and disjointness imply
\begin{equation}\label{pot:interactionrow}
 \sup_i\sum_{j\ne i}
       \left(\frac{r_ir_j}{|y_i-y_j|^2}\right)^a\le C.
\end{equation}
Suppose the centers lie in a fixed compact subset of the regular
domain of $U_0$, and write
$$
 B_i=U_0(y_i)+\sum_{j\ne i}b_{\lambda_j,y_j}(y_i).
$$
For each finite derivative order $m_0$, the maximal radius can
be chosen sufficiently small, depending on $U_0$, the compact
set, and $\eta$, so that
\begin{align}
 B_ir_i^a&\le C\eta^a,\label{pot:backgroundsize}\\
 r_i^m\left|D^m\left(U_0+\sum_{j\ne i}b_{\lambda_j,y_j}\right)(y_i)\right|
 &\le C_mB_i\quad(0\le m\le m_0),
 \label{pot:backgroundjets}\\
 A_iB_i&\le C\eta^{2a},\qquad
 \frac{B_i\rho_i^{2a}}{A_i}\le C\nu^{2a}.
 \label{pot:backgroundconsequences}
\end{align}
The constants are independent of the old solution.
\end{lemma}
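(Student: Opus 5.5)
Four estimates need proof. The interaction bound \eqref{pot:interactionrow} will come from a dyadic shell decomposition around $y_i$ combined with packing. The remaining three bounds will follow by choosing the maximal radius small once $U_0$, the compact set and $\eta$ are fixed.

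\textbf{Interaction bound.} Fix $i$. Disjointness of the balls gives $|y_i-y_j|\ge r_i+r_j$ for $j\ne i$, so $r_j<d_j:=|y_i-y_j|$ and $r_i<d_j$. I split the indices $j\ne i$ into dyadic shells $2^kr_i\le d_j<2^{k+1}r_i$, $k\ge0$. On the $k$th shell every $r_j$ is at most $2^{k+1}r_i$. Since $a>\beta$, we have $r_j^a\le(2^{k+1}r_i)^{a-\beta}r_j^\beta$. Packing with $R=2^{k+1}r_i$ then gives
\[
\sum_{\text{shell }k}r_j^a\le C(2^{k}r_i)^a .
\]
The shell therefore contributes at most $C r_i^a(2^kr_i)^{a}(2^kr_i)^{-2a}=C2^{-ka}$. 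Summing this geometric series over $k\ge0$ gives \eqref{pot:interactionrow} with $C=C(n,\beta,C_{\mathrm{pack}})$.

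\textbf{Amplitude bound \eqref{pot:backgroundsize}.} Write $B_ir_i^a=U_0(y_i)r_i^a+\sum_{j\ne i}b_{\lambda_j,y_j}(y_i)r_i^a$.
\begin{itemize}
\item For the first term, $U_0$ is bounded on the compact set by some $M$. Choosing $\max r_i\le (\eta^a/M)^{1/a}$ gives $U_0(y_i)r_i^a\le\eta^a$.
\item For the second term, $b_{\lambda_j,y_j}(y_i)\le(2\lambda_j)^a d_j^{-2a}=(2\eta)^ar_j^ad_j^{-2a}$. Multiplying by $r_i^a$ and applying \eqref{pot:interactionrow} gives at most $C\eta^a$.
\end{itemize}

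\textbf{Consequences \eqref{pot:backgroundconsequences}.} These follow algebraically. First, $A_iB_i=(2\eta)^ar_i^aB_i\le C\eta^{2a}$. Second, $B_i\rho_i^{2a}/A_i=B_i\nu^{2a}r_i^{2a}/((2\eta)^ar_i^a)=\nu^{2a}B_ir_i^a/(2\eta)^a\le C\nu^{2a}$.

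\textbf{Jet bound \eqref{pot:backgroundjets}.} This is the step I expect to be the main obstacle, since the derivative constants must not depend on $U_0$ or $\eta$. I treat the two parts of the sum separately.
\begin{itemize}
\item \emph{The $U_0$ part.} The compact set of centers has positive distance from $F$, so $U_0$ is smooth near it, $|D^mU_0|\le M_m$ for $m\le m_0$, and $U_0\ge m_->0$. Shrinking $\max r_i$ so that $r_i\le\min_m(m_-/M_m)^{1/m}$ gives $r_i^m|D^mU_0(y_i)|\le U_0(y_i)\le B_i$ for $m\ge1$.
\item \emph{The bubble part.} I use the scaling bound $|D^mb_{\lambda,y}(x)|\le C_m b_{\lambda,y}(x)(\lambda^2+|x-y|^2)^{-m/2}$. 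At $x=y_i$ we have $|y_i-y_j|\ge r_i$ by disjointness, so $(\lambda_j^2+|y_i-y_j|^2)^{-m/2}\le r_i^{-m}$. Hence $r_i^m|D^m b_{\lambda_j,y_j}(y_i)|\le C_mb_{\lambda_j,y_j}(y_i)$. Summing over $j\ne i$ gives at most $C_mB_i$.
\end{itemize}

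Combining the two parts proves \eqref{pot:backgroundjets}. All constants are universal; only the smallness threshold for $\max_i r_i$ depends on $U_0$, the compact set and $\eta$.
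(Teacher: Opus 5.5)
Your proposal is correct and follows essentially the same route as the paper. It uses the dyadic shell and packing estimate at $y_i$ with the $i$th ball omitted, the radius choice $\sup U_0\,r_i^a\le\eta^a$, and the scaling bound $|D^m b_{\lambda_j,y_j}(y_i)|\le C_m b_{\lambda_j,y_j}(y_i)\,|y_i-y_j|^{-m}$ combined with $r_i\le|y_i-y_j|$; it then obtains \eqref{pot:backgroundconsequences} by substituting $A_i=(2\eta r_i)^a$ and $\rho_i=\nu r_i$.
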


\begin{proof}
Disjointness gives $d_{ij}=|y_i-y_j|\ge r_i+r_j$.
The far-field shell estimate in the proof of
Proposition~\ref{pot:endpoint}, applied at $y_i$ with $R=r_i$
and the $i$th ball omitted, proves \eqref{pot:interactionrow}.
Hence the other bubbles contribute at most $C\eta^ar_i^{-a}$
to $B_i$.
Choose the maximal radius so that
$\sup U_0\,r_i^a\le\eta^a$ on the compact set.  This gives
\eqref{pot:backgroundsize}.

For $j\ne i$,
$$
 |D^mb_{\lambda_j,y_j}(y_i)|
 \le C_mb_{\lambda_j,y_j}(y_i)d_{ij}^{-m}.
$$
Since $r_i\le d_{ij}$, summing gives the bubble contribution
to \eqref{pot:backgroundjets}.  The old factor has bounded
relative derivatives on the fixed compact neighborhood, so
reducing the maximal radius gives the same estimate for it.
Finally, insert $A_i=2^a\eta^ar_i^a$ and $\rho_i=\nu r_i$
into \eqref{pot:backgroundsize} to obtain
\eqref{pot:backgroundconsequences}.
\end{proof}

\section{Local families of Delaunay ends}\label{in:section}

We construct the local solutions used at the new punctures for
\eqref{eq:flat}, with the constants in \eqref{eq:constants}.
We write $\Pi_{\mathrm{lo}}$ for the projection onto spherical harmonics
of degrees zero and one, and $\Pi_{\mathrm{hi}}$ for its complement.
Boundary norms are taken on $\Sph^{n-1}$ after radial rescaling.

Fix a H\"older exponent $0<\theta<1$ and weights
\begin{equation}\label{in:weights}
 1<\mu_1<\mu_2<\mu_3<2.
\end{equation}
For $\mathcal A_r=\{r/2<|x|<r\}$, set
$$
 |w|_{k,\theta;r}
 =\sum_{j=0}^k r^j\|\nabla^jw\|_{L^\infty(\mathcal A_r)}
   +r^{k+\theta}[\nabla^kw]_{\theta;\mathcal A_r}.
$$
The normalized weighted H\"older norms on $B_\rho\setminus\{0\}$ are
\begin{equation}\label{in:norms}
 \|w\|_{X_\mu}
 =\sup_{0<r\leq\rho}(r/\rho)^{-\mu}|w|_{2,\theta;r},
 \qquad
 \|f\|_{Y_\mu}
 =\sup_{0<r\leq\rho}(r/\rho)^{-\mu}r^2|f|_{0,\theta;r}.
\end{equation}
Overlapping annuli are used at the outer boundary. Set
$$
 X_\mu^{\mathrm{hi},0}
 =\{w\in X_\mu:\Pi_{\mathrm{hi}}w|_{\partial B_\rho}=0\}.
$$
This is the domain of each linearized operator below, equivalently the
range of its inverse. Both norms are $\rho^\mu$ times the corresponding
norms on $C^{2,\theta}_\mu$ and $C^{0,\theta}_{\mu-2}$ in
\cite[Section~2.4]{Santos2010}. If $\gamma\geq\mu$, the inclusions
$X_\gamma\subset X_\mu$ and $Y_\gamma\subset Y_\mu$ have norm at most one.

\subsection{A radial family with variable neck parameter}

Recall the bubble $b_\lambda=b_{\lambda,0}$ from
\eqref{pot:bubble}, and set $A=(2\lambda)^a$.
We hold the bubble coefficient $A$ fixed and, for $\rho>\lambda$, solve
the radial Cauchy problem
\begin{equation}\label{in:cauchy}
 U_B(\rho)=b_\lambda(\rho)+B,
 \qquad U_B'(\rho)=b_\lambda'(\rho),\qquad B>0.
\end{equation}
Put
$$
 s=\lambda/\rho,\qquad
 \vartheta_B=B\rho^{2a}/A,\qquad
 t=\log(\rho/r),\qquad v_B(t)=r^aU_B(r).
$$
We assume $s$, $\vartheta_B$, and $AB$ are small. Choose $B_*>0$ and
parametrize $B=B_*e^b$, with $|b|\leq b_0$ for a fixed small $b_0$.
Thus $b$ changes the neck parameter. The high-mode boundary datum
$\phi=B_*\psi$ will be independent of $b$.

The cylindrical equation and its conserved energy are
\begin{equation}\label{in:ode-energy}
 v''-a^2v+\kappa v^p=0,
 \qquad
 E(v,v')=\frac12\bigl((v')^2-a^2v^2+a^2v^{p+1}\bigr).
\end{equation}
At $B=0$ the solution is the standard bubble and $E=0$. Differentiating
the data in \eqref{in:cauchy} with respect to $b$ gives
\begin{equation}\label{in:energy-derivative}
 E_b=aB\rho^{2a+1}b_\lambda'(\rho)
       +\kappa B\rho^n\bigl(b_\lambda(\rho)+B\bigr)^p
     =-2a^2AB\bigl(1+O(s^2+\vartheta_B)\bigr).
\end{equation}
Indeed,
$$
 b_\lambda(\rho)=A\rho^{-2a}(1+O(s^2)),\qquad
 \rho b_\lambda'(\rho)=-2aA\rho^{-2a}(1+O(s^2)),
$$
while $B/b_\lambda(\rho)=O(\vartheta_B)$ and
$\rho^2b_\lambda(\rho)^q=O(s^2)$. Integrating in $B$ from zero yields
\begin{equation}\label{in:energy-size}
 -E\asymp AB,\qquad |E_b|\leq C|E|.
\end{equation}
For small $AB>0$ this is a nonconstant positive periodic Fowler
solution, also called a Delaunay solution, see
\cite[Section~2]{MP1999} and \cite[Proposition~2.1]{Santos2010}.
If $\varepsilon$ denotes its minimum, then
$$
 E=-\frac{a^2}{2}\varepsilon^2(1-\varepsilon^q),\qquad
 \varepsilon^2\asymp AB,\qquad |\varepsilon_b|\leq C\varepsilon.
$$
Its metric is complete at the puncture: in cylindrical coordinates
$U_B^{2/a}|\dd x|^2=v_B^{2/a}(\dd t^2+g_{\Sph^{n-1}})$, so escape
along the end has length at least
$\varepsilon^{1/a}\int|\dd t|=\infty$. Small $AB$ also keeps the
profile away from the constant cylindrical solution.

\begin{lemma}\label{in:radial-derivative}\label{in:ode}
For every fixed integer $m\geq0$,
\begin{equation}\label{in:radial-loss}
 |\partial_t^m\partial_bv_B(t)|
       \leq C_m(1+t)v_B(t),\qquad t\geq0.
\end{equation}
For $Q_B=\kappa pU_B^q$, the corresponding scaled H\"older estimates are
\begin{align}
 r^2|Q_B|_{0,\theta;r}
   &\leq C\min\{1,s^2e^{2t}\},\label{in:potential}\\
 r^2|\partial_bQ_B|_{0,\theta;r}
   &\leq C(1+t)\min\{1,s^2e^{2t}\}.
       \label{in:potential-derivative}
\end{align}
The constants are uniform as the neck shrinks and for the phases
selected by \eqref{in:cauchy}.
\end{lemma}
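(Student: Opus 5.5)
The plan is to treat $v_B$ as a point on the family of periodic orbits of \eqref{in:ode-energy}, parametrized by energy and phase, and to read off \eqref{in:radial-loss} from how these two parameters depend on $b$. The potential bounds \eqref{in:potential}--\eqref{in:potential-derivative} should then follow from the pointwise size of $v_B$.

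\emph{Preliminary bounds.} Since $E<0$, the energy identity gives $(v')^2=2E+a^2v^2-a^2v^{p+1}\le a^2v^2$, so $|v_B'|\le a\,v_B$. It also gives $v_B^{q}\le1$, so $v_B\le1$. Differentiating the ODE then bounds every $t$-derivative: $|\partial_t^mv_B|\le C_mv_B$, uniformly in the neck size.

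\emph{Decomposition in energy and phase.} Let $V_E$ be the periodic solution with energy $E$, normalized to have its maximum at $t=0$. Write $v_B(t)=V_{E(b)}(t+\tau(b))$, so that
$$\partial_bv_B=E_b\,(\partial_EV_E)(t+\tau)+\tau_b\,V_E'(t+\tau).$$
Two facts are needed.
\begin{enumerate}
\item $|\tau_b|\le C$. This comes from the Cauchy data \eqref{in:cauchy}. At $t=0$ the data lie in the bubble-like part of the orbit, far from the neck. The $b$-derivative of the data has size $\rho^aB=O(\vartheta_B)\,v_B(0)$, and $|E_b|\le C|E|$ by \eqref{in:energy-size}. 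Together these show the phase is determined with bounded derivative.
\item $|E|\,|\partial_EV_E(s)|\le C(1+|s|)V_E(s)$.
\end{enumerate}

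\emph{Proof of the second fact.} I would use the time-of-flight representation
$$s(v)=\int_v^{v_{\max}}\frac{\dd w}{\sqrt{2E+a^2w^2-a^2w^{p+1}}}$$
on each monotone half-period, and differentiate in $\log|E|$.
\begin{itemize}
\item Near the maximum, the orbit is close to the bubble, and the relative variation of $v$ with $\log|E|$ is $O(1)$.
\item Near the neck, $v\approx\varepsilon\cosh(a(s-s_0))$ with $\varepsilon^2\asymp|E|$. There $\partial_{\log|E|}\log v=O(1)$ apart from the shift of $s_0$.
\item The period satisfies $T(E)=a^{-1}\log(1/|E|)+O(1)$, with $|\partial_{\log|E|}T|\le C$.
\end{itemize}
Hence after $k$ periods the accumulated phase drift is $O(k)$, which is $O(1+s)$ because $T$ is bounded below. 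A phase drift $\delta$ contributes at most $|\delta|\,|V'|\le a|\delta|V$. This yields the linear loss $(1+t)$ in \eqref{in:radial-loss} for $m=0$.

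\emph{Higher derivatives.} For $m\ge1$, note that $w=\partial_bv_B$ solves $w''-a^2w+\kappa pv_B^qw=0$, whose coefficients are bounded. So $\partial_t^mw$ is controlled by $w$ and $w'$. For $w'$ I would apply the same decomposition to $V'$, using the bound on $|V''|$ from the preliminary step, and then induct on $m$.

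\emph{Main obstacle.} The hardest step is making the time-of-flight estimates uniform as $\varepsilon\to0$. The integrand degenerates near the turning points and near $w\approx0$. I would split the orbit at $v\sim\varepsilon^{1/2}$. On the lower part I would compare with the explicit linear solution $\varepsilon\cosh$, and on the upper part with the bubble. Each comparison carries a relative error $O(v^q)$.

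\emph{Potential estimates.} In cylindrical variables $r^2Q_B=\kappa pv_B^q$, and scaled H\"older norms on $\mathcal A_r$ correspond to bounded $t$-windows. On $t\le\log(1/s)$ the solution stays close to the bubble plus $B$. Since $\lambda/r=se^t$ and $r^aB\le\vartheta_B$, this gives $v_B\le C(se^t)^a$ there. Combined with $v_B\le1$, this gives $v_B^q\le C\min\{1,s^2e^{2t}\}$. The derivative bounds $|\partial_t^mv_B|\le C_mv_B$ convert this into \eqref{in:potential}. Finally,
$$r^2\partial_bQ_B=\kappa pq\,v_B^{q-1}\partial_bv_B,$$
which by \eqref{in:radial-loss} is bounded by $C(1+t)v_B^q$; the same argument with $t$-derivatives then gives \eqref{in:potential-derivative}.
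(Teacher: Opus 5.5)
Your outline has the same architecture as the paper's proof:
\begin{itemize}
\item the preliminary bounds $|v_B'|\le av_B$, $v_B\le1$, $|v_B^{(m)}|\le C_mv_B$;
\item a splitting of $\partial_bv_B$ into a neck-size part and a phase part;
\item a bounded phase derivative at the cutoff, obtained by differentiating $v_B(0)=V(\tau)$ and dividing by $V'(\tau)\asymp a\,v_B(0)$;
\item linear growth from the accumulated period drift $kT_b$, with $k\le C(1+t)$ because $T$ is bounded below;
\item the identity $r^2\partial_bQ_B=\kappa pq\,v_B^{q-1}\partial_bv_B$.
\end{itemize}

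The genuine difference is how you prove the uniform one-period bound on the neck derivative. The paper normalizes the profile at the neck, $w_\varepsilon(0)=\varepsilon$, and sets $h=\varepsilon\partial_\varepsilon w_\varepsilon/w_\varepsilon$. It then uses the exact identity $(w_\varepsilon^2h')'=-\kappa q\,w_\varepsilon^{q+2}h$ with $h(0)=1$, $h'(0)=0$. Gronwall's inequality needs only the undifferentiated bounds $\int_0^tw_\varepsilon^{q+2}\le Cw_\varepsilon^{q+2}(t)$ and $\int_0^{T/2}w_\varepsilon^q\le C$. It controls $h$ and $h'$ on the whole increasing half-period, through the neck and the bubble region at once. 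This gives $|\varepsilon\partial_\varepsilon w_\varepsilon|+|\varepsilon\partial_\varepsilon w_\varepsilon'|\le Cw_\varepsilon$, so the bound on $\partial_bv_B'$ you need for $m\ge1$ comes for free. Differentiating $w_\varepsilon'(T/2)=0$ then gives $|\varepsilon T_\varepsilon|\le C$.

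Your time-of-flight route gives sharper information, such as $T=a^{-1}\log(1/|E|)+O(1)$. The price is that you must handle the turning-point singularities of the integrand under $E$-differentiation. You must also control the $E$-derivatives, not just the sizes, of the errors in both matched comparisons across $v\sim\varepsilon^{1/2}$. The paper's route avoids all of this.

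Two points need repair.

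\emph{Fact (2) is too weak as stated.} It bounds the relative drift by $1+|\sigma|$, where $\sigma$ (your $s$) is orbit time measured from the maximum. With your normalization the cutoff sits at $\tau\approx-\log(1/s)$, where $s=\lambda/\rho$. Used literally, (2) gives only $|\tau_b|\le C(1+\log(1/s))$ and $|\partial_bv_B(t)|\le C(1+t+\log(1/s))\,v_B(t)$, which is not uniform in $s$. What you need, and what your sketch actually establishes, is the per-period form: the relative $\log|E|$-derivative is $O(1)$ on the period containing the cutoff and $O(1+k)$ on the $k$-th subsequent period. Then $k\le C(1+t)$. The paper's normalization at the neck, with $0<\tau_B<T/2$, makes this count immediate.

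\emph{The bubble-closeness claim is unnecessary.} For \eqref{in:potential} you assert that $v_B$ stays close to the bubble plus $B$ for $t\le\log(1/s)$, which would need its own proof. Your preliminary bound $|v_B'|\le av_B$ together with $v_B(0)\asymp s^a$ already gives $v_B(t)\le Cs^ae^{at}$, which is exactly how the paper argues.
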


\begin{proof}
Let $w_\varepsilon$ be the periodic profile normalized by
$w_\varepsilon(0)=\varepsilon$ and $w_\varepsilon'(0)=0$; write
$T=T(\varepsilon)$ and $M=M(\varepsilon)$ for its period and maximum.
On the increasing half-period,
$$
 (w_\varepsilon')^2=D_\varepsilon(w_\varepsilon),\qquad
 D_\varepsilon(z)
  =a^2\bigl(z^2-\varepsilon^2-z^{q+2}+\varepsilon^{q+2}\bigr).
$$
Fix a small $c=c(n)>0$ and split the energy integral into
$[\varepsilon,2\varepsilon]$, $[2\varepsilon,c]$, and $[c,M]$.
In the first interval the substitution $z=\varepsilon Z$ gives
$\varepsilon^{-2}D_\varepsilon(\varepsilon Z)\asymp Z^2-1$.
In the middle interval $D_\varepsilon(z)\asymp z^2$. In the last,
$M\to1$ and $|D_\varepsilon'(M)|$ is bounded below; hence the
square-root singularity at $M$ is uniformly integrable. These estimates
give
\begin{equation}\label{in:radial-integrals}
 \int_0^t w_\varepsilon^{q+2}(\tau)\,\dd\tau
       \leq Cw_\varepsilon^{q+2}(t),\qquad
 \int_0^{T/2}w_\varepsilon^q(\tau)\,\dd\tau\leq C
       \quad(0\leq t\leq T/2).
\end{equation}

Set $j=\varepsilon\partial_\varepsilon w_\varepsilon$ and
$h=j/w_\varepsilon$. The variational equation gives
$$
 (w_\varepsilon^2h')'
   =-\kappa q w_\varepsilon^{q+2}h,\qquad h(0)=1,\quad h'(0)=0.
$$
By \eqref{in:radial-integrals},
$$
 |h'(t)|\leq Cw_\varepsilon^q(t)
               \sup_{0\leq\tau\leq t}|h(\tau)|.
$$
Gronwall's inequality therefore bounds $h$ and $h'$ uniformly on the
increasing half-period. Since $|w_\varepsilon'|\leq aw_\varepsilon$,
$$
 |\varepsilon\partial_\varepsilon w_\varepsilon|
  +|\varepsilon\partial_\varepsilon w_\varepsilon'|
       \leq Cw_\varepsilon.
$$
At the maximum, $|w_\varepsilon''(T/2)|$ is uniformly positive.
Differentiating $w_\varepsilon'(T/2)=0$ gives
$|\varepsilon T_\varepsilon|\leq C$. Differentiation of the reflection
identity $w_\varepsilon(t)=w_\varepsilon(T-t)$ extends the relative
bound to the first full period.

The cutoff data satisfy
$$
\begin{aligned}
 v_0:=v_B(0)&\asymp s^a,& |E|/v_0^2&\leq C\vartheta_B,\\
 |\partial_bv_0|/v_0&\leq C\vartheta_B,&
 v_B'(0)/v_0&=a(1+O(s^2+\vartheta_B)).
\end{aligned}
$$
Thus the cutoff lies in the increasing middle interval. If
$w_\varepsilon(\tau_B)=v_0$ with $0<\tau_B<T/2$, then
$w_\varepsilon'(\tau_B)\asymp av_0$ and
$$
 (\tau_B)_b
  =\frac{\partial_bv_0-\varepsilon_b
                   \partial_\varepsilon w_\varepsilon(\tau_B)}
          {w_\varepsilon'(\tau_B)},\qquad |(\tau_B)_b|\leq C.
$$
Also $|T_b|\leq C$, since $|\varepsilon_b|\leq C\varepsilon$.
Now $v_B(t)=w_\varepsilon(t+\tau_B)$. In the $k$th period,
differentiating periodicity introduces the term $-kT_bw_\varepsilon'$;
this is the familiar linear growth of the neck-variation Jacobi field
\cite[Section~5, Lemma~3]{MP1999}; \cite[Section~II]{MPU1996}.
The energy integral on $[c/2,c]$ gives
$T\geq2a^{-1}\log2$ for small $\varepsilon$, so $k\leq C(1+t)$, proving
\eqref{in:radial-loss} for $m=0,1$. The equation and its variational
equation give the higher derivatives; throughout,
$|v_B^{(m)}|\leq C_mv_B$ for each fixed $m$.

Finally, negative energy gives $|v_B'|\leq av_B$ and $v_B\leq1$.
Hence $v_B(t)\leq Cs^ae^{at}$ and
$r^2Q_B=\kappa p v_B^q\leq C\min\{1,s^2e^{2t}\}$.
The parameter estimate follows from \eqref{in:radial-loss}.
One spatial derivative introduces $r^{-1}$, so these pointwise
estimates also give the scaled H\"older bounds.
\end{proof}

\subsection{The local linear inverse}

For each fixed $1<\mu<2$, Silva Santos's Proposition~2.3 and
Corollaries~2.3--2.4 \cite[Section~2.5]{Santos2010} provide a uniformly bounded right inverse
\begin{equation}\label{in:inverse}
 G_{\varepsilon,R,\rho,\boldsymbol z}:Y_\mu\longrightarrow X_\mu^{\mathrm{hi},0},
 \qquad L_{\varepsilon,R,\boldsymbol z}Gf=f,
 \qquad \Pi_{\mathrm{hi}}(Gf)|_{\partial B_\rho}=0,
\end{equation}
where $L=\Delta+\kappa pU_{\varepsilon,R,\boldsymbol z}^q$
. The bound is independent of the small
neck $\varepsilon$, the phase scale $R>0$, the radius $0<\rho\leq1$,
and $\boldsymbol z$ with $|\boldsymbol z|\rho$ sufficiently small.
Here $U_{\varepsilon,R,0}(r)=r^{-a}w_\varepsilon(-\log r+\log R)$,
and $U_{\varepsilon,R,\boldsymbol z}
=\mathcal T_{\boldsymbol z}U_{\varepsilon,R,0}$, with
$\mathcal T_{\boldsymbol z}$ defined in \eqref{in:transform}.
For the Cauchy family, $R=\rho e^{\tau_B}$.
Silva Santos attributes the radial construction to \cite{MP1999} and the
uniformity in $R$ to \cite{Byde2003}. We use the smallest of the three
neck thresholds associated with \eqref{in:weights}.

The right inverse is unique with values in $X_\mu^{\mathrm{hi},0}$. The radial
low-mode Jacobi fields have periodic or linearly growing cylindrical
factors in degree zero and factors $e^{\pm t}$ times periodic functions
in degree one \cite[Section~II]{MPU1996};
\cite[Section~2, equations~(11)--(12)]{KMPS1999}.
Their additive orders are at most $1-a<1$, so none belongs to $X_\mu$.
For a high-mode nullfield, $h=r^aw$ has zero outer trace and decays
at infinity; its cylindrical quadratic form is coercive because
$a^2+2n-\kappa pv^q\geq a+2>0$, where $2n$ is the first high spherical
eigenvalue. Integration by parts therefore gives $h=0$.
For nonzero $\boldsymbol z$, Santos's Corollary~2.4 gives
$$
 \|L_{\varepsilon,R,\boldsymbol z}-L_{\varepsilon,R,0}\|_{X_\mu\to Y_\mu}
       \leq C|\boldsymbol z|\rho.
$$
The boundary projection is unchanged, so injectivity persists by a
Neumann series for small $|\boldsymbol z|\rho$.
Thus $L_{\varepsilon,R,\boldsymbol z}:X_\mu^{\mathrm{hi},0}\to Y_\mu$
is an isomorphism with inverse \eqref{in:inverse}, and uniqueness in the
weaker space identifies its realizations at different weights on their
common domain.

\subsection{Conformal deformations and prescribed high-mode data}

For $\boldsymbol z\in\R^n$, define the special conformal transformation
$$
 D_{\boldsymbol z}(x)
   =1-2\boldsymbol z\cdot x+|\boldsymbol z|^2|x|^2,
 \qquad
 F_{\boldsymbol z}(x)
   =\frac{x-\boldsymbol z|x|^2}{D_{\boldsymbol z}(x)}.
$$
It fixes the puncture and acts on a conformal factor by
\begin{equation}\label{in:transform}
 \mathcal T_{\boldsymbol z}U(x)
   =D_{\boldsymbol z}(x)^{-a}U(F_{\boldsymbol z}(x)),
 \qquad |F_{\boldsymbol z}(x)|=|x|/\sqrt{D_{\boldsymbol z}(x)}.
\end{equation}
This is the deformation of Fowler solutions used in
\cite[Section~2]{KMPS1999} and \cite[Section~2.3]{Santos2010}; conformal
invariance preserves \eqref{eq:flat}. For the dimensionless
parameter $\boldsymbol c=\rho\boldsymbol z$, the bound
$|rU_B'/U_B|\leq2a$ gives
\begin{equation}\label{in:sct-derivative}
 \frac{|\partial_{\boldsymbol c}\mathcal T_{\boldsymbol z}U_B|}
      {\mathcal T_{\boldsymbol z}U_B}\leq Cr/\rho.
\end{equation}
The same estimate holds in scaled spatial norms. For
$|\boldsymbol c|$ in a fixed small neighborhood, the potential estimates
\eqref{in:potential}--\eqref{in:potential-derivative} persist after
transformation, while a $\boldsymbol c$ derivative gains $r/\rho$.

The image radius at $r=\rho$ may exceed $\rho$, so we also record a
two-sided collar estimate. Set $V_B=U_B-b_\lambda$. On a fixed annulus
about $\rho$,
\begin{equation}\label{in:collar}
 |V_B|\leq CB,\qquad
 |r^mV_B^{(m)}|\leq C_ms^2B\quad(m\geq1),
\end{equation}
with identical bounds for $\partial_bV_B$. Indeed,
$$
 V_B''+\frac{n-1}{r}V_B'
  =-\kappa\bigl((b_\lambda+V_B)^p-b_\lambda^p\bigr),
 \qquad V_B(\rho)=B,\quad V_B'(\rho)=0.
$$
After rescaling by $\rho$, the coefficient of the linearized difference
is $O(s^2)$. Since $B/b_\lambda(\rho)$ is small, Gronwall's inequality
on the fixed collar and differentiation of the equation prove
\eqref{in:collar}, including its parameter version.

Let $H_\phi$ be the regular harmonic extension of
$\phi\in\Pi_{\mathrm{hi}}C^{2,\theta}(\Sph^{n-1})$ to $B_\rho$.
Its first possible degree is two, and
\begin{equation}\label{in:harmonic}
 |H_\phi|_{2,\theta;r}
       \leq C\|\phi\|_{C^{2,\theta}}(r/\rho)^2
\end{equation}
\cite[Section~2.6.1, Proposition~2.4]{Santos2010}.
Write $\ell_s=\log(1/s)$. The potential bounds imply, for $1<\mu<2$,
\begin{align}
 \|Q_BH_\phi\|_{Y_\mu}
   &\leq Cs^{2-\mu}\|\phi\|_{C^{2,\theta}},
       \label{in:h-forcing}\\
 \|(\partial_bQ_B)H_\phi\|_{Y_\mu}
   &\leq C(1+\ell_s)s^{2-\mu}\|\phi\|_{C^{2,\theta}}.
       \label{in:h-forcing-derivative}
\end{align}
For example, the second bound is the supremum of
$$
 (1+t)e^{-(2-\mu)t}\min\{1,s^2e^{2t}\},
$$
split at $t=\ell_s$.
These estimates hold for the transformed potential as well; its
$\boldsymbol c$ derivative satisfies \eqref{in:h-forcing}.

\begin{proposition}\label{in:family}
Set $\vartheta_*=B_*\rho^{2a}/A$ and $\phi=B_*\psi$, with
$\psi\in\Pi_{\mathrm{hi}}C^{2,\theta}(\Sph^{n-1})$.
Suppose $|\boldsymbol z|\rho$ lies in the fixed neighborhood of
\eqref{in:inverse}. If $s$, $\vartheta_*$, $AB_*$, and
$\sqrt{\vartheta_*}\|\psi\|_{C^{2,\theta}}$ are sufficiently small,
there is a unique small $W\in X_{\mu_3}$, with zero high-mode boundary
trace, such that
\begin{equation}\label{in:family-equation}
 I=\mathcal T_{\boldsymbol z}U_B+H_\phi+W
\end{equation}
is a positive solution of \eqref{eq:flat}. The correction is
continuously differentiable in $(b,\boldsymbol c,\psi)$ with values in
$X_{\mu_1}$, and its boundary Cauchy data are continuously differentiable
in the usual boundary spaces. Uniformly on a fixed small parameter
neighborhood,
\begin{align}
 \|W\|_{X_{\mu_3}}
   &\leq Cs^{2-\mu_3}B_*\|\psi\|_{C^{2,\theta}},
       \label{in:w-size}\\
 \|\partial_bW\|_{X_{\mu_2}}
   &\leq C(1+\ell_s)s^{2-\mu_2}B_*\|\psi\|_{C^{2,\theta}},
       \label{in:w-b}\\
 \|\partial_{\boldsymbol c}W\|_{X_{\mu_3}}
   &\leq Cs^{2-\mu_3}B_*\|\psi\|_{C^{2,\theta}},
       \label{in:w-c}\\
 \|D_\psi W\|_{\Pi_{\mathrm{hi}}C^{2,\theta}\to X_{\mu_3}}
   &\leq Cs^{2-\mu_3}B_*.
       \label{in:w-psi}
\end{align}
More precisely, if $\mathcal D_{\mathrm{in}}$ is the harmonic
Dirichlet-to-Neumann operator on the unit ball, then
$$
\begin{aligned}
 B_*^{-1}\Pi_{\mathrm{hi}}
       \bigl(I-\mathcal T_{\boldsymbol z}U_B\bigr)|_{r=\rho}&=\psi,\\
 B_*^{-1}\Pi_{\mathrm{hi}}\rho\partial_r
       \bigl(I-\mathcal T_{\boldsymbol z}U_B\bigr)|_{r=\rho}
  &=\mathcal D_{\mathrm{in}}\psi+\mathcal E(b,\boldsymbol c,\psi),
\end{aligned}
$$
where $\mathcal D_{\mathrm{in}}$ has eigenvalue $\ell$ on degree
$\ell\geq2$, and $\mathcal E$ is a small $C^1$ error with the bounds
induced by \eqref{in:w-size}--\eqref{in:w-psi} after division by $B_*$.
\end{proposition}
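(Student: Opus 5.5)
We look for $W$ as a fixed point for the linearization at the transformed Delaunay profile, using the right inverse \eqref{in:inverse}. Write $U_\star=\mathcal T_{\boldsymbol z}U_B$, which solves \eqref{eq:flat} by conformal invariance, and $L=\Delta+\kappa pU_\star^q$. Since $H_\phi$ is harmonic, substituting \eqref{in:family-equation} into \eqref{eq:flat} gives
$$
 LW=-\kappa pU_\star^qH_\phi-\mathcal N(H_\phi+W),\qquad
 \mathcal N(w)=\kappa\bigl((U_\star+w)^p-U_\star^p-pU_\star^qw\bigr).
$$
We solve this as $W=\Phi(W):=G\bigl(-\kappa pU_\star^qH_\phi-\mathcal N(H_\phi+W)\bigr)$ in a small ball of $X_{\mu_3}^{\mathrm{hi},0}$, with $G=G_{\varepsilon,R,\rho,\boldsymbol z}$. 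The boundary condition $\Pi_{\mathrm{hi}}W|_{\partial B_\rho}=0$ is automatic, so the high-mode trace of $I-U_\star$ equals $\phi=B_*\psi$. The Dirichlet-to-Neumann statement then follows from $H_\phi$ having normal derivative $\rho^{-1}\mathcal D_{\mathrm{in}}\phi$, with $\mathcal E=B_*^{-1}\Pi_{\mathrm{hi}}\rho\partial_rW|_{r=\rho}$. The trace bound on $\mathcal E$ comes from $\|W\|_{X_{\mu_3}}$, since $|W|_{2,\theta;\rho}$ controls the Cauchy data.

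The linear forcing term is bounded by \eqref{in:h-forcing} with $\mu=\mu_3$, giving $CB_*s^{2-\mu_3}\|\psi\|$; this is the target size \eqref{in:w-size}.

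For the nonlinear term we use the pointwise bound $|\mathcal N(w)|\le CU_\star^{p-2}w^2$ when $|w|\le U_\star/2$, and $\le C|w|^p$ in general (for $p<2$ we use the latter throughout). The key point is that $w/U_\star$ stays small. On the annulus $\mathcal A_r$ we have $U_\star\gtrsim v_B(t)r^{-a}$, with $v_B\gtrsim s^a e^{at}$ for $t\le\ell_s$ and $v_B\ge\varepsilon\asymp\sqrt{AB}$ afterwards. On the other side, $|H_\phi|+|W|\lesssim B_*\|\psi\|(r/\rho)^{\mu_1}$ and $B\asymp B_*=\vartheta_*A\rho^{-2a}$. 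So
$$
 \frac{|w|}{U_\star}\lesssim\vartheta_*\|\psi\|\,s^{-a}e^{-(a+\mu_1)t}\quad\text{on the bubble region},
$$
and $\lesssim\vartheta_*\|\psi\|\,(\rho/\sqrt{AB})\rho^{-a}\cdots$ on the neck region. Checking this against the hypotheses, the relevant smallness is
$$
 \frac{B_*\|\psi\|\rho^{a}}{\sqrt{AB_*}}=\sqrt{\vartheta_*}\,\|\psi\|,
$$
which uses the decay $(r/\rho)^{\mu}$ with $\mu>1$ to absorb $r^{a}$ growth on the neck. This is why the hypothesis contains $\sqrt{\vartheta_*}\|\psi\|$. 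With this bound, $r^2|\mathcal N|\lesssim r^2U_\star^{q}\cdot(|w|/U_\star)\cdot|w|\lesssim\sqrt{\vartheta_*}\|\psi\|\,|w|$, using \eqref{in:potential}. Hence $\|\mathcal N(H_\phi+W)\|_{Y_{\mu_3}}$ is a small multiple of $B_*\|\psi\|s^{2-\mu_3}$ plus a small Lipschitz constant in $W$, and the contraction mapping principle gives existence, uniqueness in the small ball, positivity (since $|w|<U_\star/2$), and \eqref{in:w-size}.

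Differentiability follows from the implicit function theorem applied to $\mathcal F(b,\boldsymbol c,\psi,W)=W-\Phi(W)$. The $\psi$-derivative is linear forcing of the same type, giving \eqref{in:w-psi}. The $\boldsymbol c$-derivative gains $r/\rho$ by \eqref{in:sct-derivative}, and the operator variation is controlled by Santos's Corollary~2.4, giving \eqref{in:w-c}.

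The main obstacle is the $b$-derivative. By \eqref{in:potential-derivative}, $\partial_bQ_B$ carries the factor $(1+t)$, reflecting the linearly growing Jacobi field. So $\partial_bG$ is not bounded $Y_{\mu_3}\to X_{\mu_3}$. We pass to the weaker weight: writing $L\,\partial_bW=-(\partial_bQ_B)(H_\phi+W)-\partial_b\mathcal N$, the factor $(1+t)e^{-(\mu_3-\mu_2)t}$ is bounded, so the right side lies in $Y_{\mu_2}$ with norm $C(1+\ell_s)s^{2-\mu_2}B_*\|\psi\|$ by \eqref{in:h-forcing-derivative}. Inverting in $X_{\mu_2}$ gives \eqref{in:w-b}; uniqueness identifies the inverses at the two weights. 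Continuity of the derivative into $X_{\mu_1}$ uses the same loss once more, which is the reason for the three weights $\mu_1<\mu_2<\mu_3$.
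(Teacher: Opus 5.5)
Your fixed-point setup is the paper's: the same equation for $W$ with Santos's inverse, the same relative smallness $\omega=\sqrt{\vartheta_*}\|\psi\|_{C^{2,\theta}}$, and the boundary identities read off from $\Pi_{\mathrm{hi}}W|_{\partial B_\rho}=0$. The gap is differentiability in $b$. You invoke the implicit function theorem for $\mathcal F(b,\boldsymbol c,\psi,W)$, but the loss you point out afterwards rules this out. The quantity $r^2|\partial_bQ_B|$ carries the factor $1+t$, and $(1+t)(r/\rho)^{\mu}$ is not $O((r/\rho)^{\mu})$. So $b\mapsto\mathcal F(b,\cdot)$ is not differentiable into $X_\mu$ for any fixed $\mu$, and the theorem cannot be applied on these spaces in the $b$ direction. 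At fixed $b$ it is fine for $\boldsymbol c$ and $\psi$, where the coefficient derivative gains $r/\rho$ or the dependence is linear. Your last paragraph therefore estimates a formal derivative whose existence and joint continuity have not been shown. The paper supplies this step by hand. With $W_h=W(b+h)$, $Z_h=H_\phi+W_h$, and $Q^{(b)}$, $L^{(b)}$, $\mathcal N_b$ built from $\mathcal T_{\boldsymbol z}U_{B_*e^b}$, subtracting the two equations gives
\[
 \bigl(L^{(b)}+C_h\bigr)(W_h-W_0)
 =-\bigl(Q^{(b+h)}-Q^{(b)}\bigr)Z_h
 -\bigl(\mathcal N_{b+h}(Z_h)-\mathcal N_b(Z_h)\bigr).
\]
Here $C_h$ is the averaged derivative of $\mathcal N_b$ between $Z_0$ and $Z_h$. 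It is a multiplier of norm $O(\omega)$, so $L^{(b)}+C_h$ is invertible at all three weights by a Neumann series. Since $W_h\in X_{\mu_3}$, the right side is $O(h)$ in $Y_{\mu_2}$, which gives Lipschitz dependence into $X_{\mu_2}$. After division by $h$, the sources converge in $C^{0,\theta}$ on each compact annulus. Their tails, and $C_h$ applied to the bounded $X_{\mu_2}$ quotients, are uniformly small in $Y_{\mu_1}$ because of the extra decay $e^{-(\mu_3-\mu_1)t}$ and $e^{-(\mu_2-\mu_1)t}$. The fixed inverse at $\mu_1$ then makes the quotients converge in $X_{\mu_1}$, and the same comparison at nearby parameters gives continuity. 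Lipschitz at $\mu_2$ and derivative at $\mu_1$ is the real role of the three weights.

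Separately, your estimate for the derivative source does not produce \eqref{in:w-b}. Bounding $(1+t)e^{-(\mu_3-\mu_2)t}$ by a constant yields only
$\|(\partial_bQ_B)W\|_{Y_{\mu_2}}\le C\|W\|_{X_{\mu_3}}\le Cs^{2-\mu_3}B_*\|\psi\|_{C^{2,\theta}}$.
This exceeds the target by the unbounded factor $\bigl((1+\ell_s)s^{\mu_3-\mu_2}\bigr)^{-1}$. You must keep the factor $\min\{1,s^2e^{2t}\}$ from \eqref{in:potential-derivative} and use $\sup_{t\ge0}(1+t)e^{-dt}\min\{1,s^2e^{2t}\}\le C_d(1+\ell_s)s^d$ with $d=\mu_3-\mu_2$, splitting at $t=\ell_s$. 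The same loss appears in your contraction step. The bound $r^2|\mathcal N(Z)|\lesssim\omega|Z|$, applied to $Z=H_\phi$, gives only $\omega\|\phi\|_{C^{2,\theta}}$ in $Y_{\mu_3}$, not a small multiple of $s^{2-\mu_3}\|\phi\|_{C^{2,\theta}}$. Keep instead $|\mathcal N(Z)|\le C\omega\,\kappa pU_\star^q|Z|$, then use \eqref{in:h-forcing}. You also need its scaled H\"older version, since the pointwise bound alone does not control $Y_\mu$. Finally, your bubble-region ratio should be $\vartheta_*\|\psi\|e^{-(2a+\mu_1)t}$; the expression with $s^{-a}$ would not be small under the hypotheses. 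The global bound $U_\star\ge c\varepsilon r^{-a}$ gives $|H_\phi+W|/U_\star\lesssim\sqrt{\vartheta_*}\|\psi\|$ directly, using only $r\le\rho$. No condition $\mu>1$ enters there.
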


\begin{proof}
Write $U=\mathcal T_{\boldsymbol z}U_B$, $Q_U=\kappa pU^q$,
$L_U=\Delta+Q_U$, and let $G_U$ be its inverse from
\eqref{in:inverse}. Set
$$
 N_U(Z)=\kappa\bigl((U+Z)^p-U^p-pU^qZ\bigr).
$$
The equation for the correction is
\begin{equation}\label{in:fixed-point}
 W=-G_U\bigl(Q_UH_\phi+N_U(H_\phi+W)\bigr).
\end{equation}
On the ball $\|W\|_{X_{\mu_3}}\leq C_0s^{2-\mu_3}\|\phi\|$,
the lower bound $U\geq c\varepsilon r^{-a}$ and uniform scaled
derivative bounds for $U$ give
$$
 |(H_\phi+W)/U|_{2,\theta;r}
  \leq C\|\phi\|\rho^a/\varepsilon
  \leq C\sqrt{\vartheta_*}\|\psi\|=: \omega.
$$
For $\omega<1/2$, Taylor's formula on $[1-\omega,1+\omega]$ gives,
for every $p>1$,
$$
 |N_U(Z)|\leq C\omega Q_U|Z|,\qquad
 |D_ZN_U(Z)|\leq C\omega Q_U,
$$
with the corresponding scaled H\"older estimates. In particular,
$$
 \|N_U(H_\phi+W)\|_{Y_\mu}
  \leq C\omega\bigl(\|Q_UH_\phi\|_{Y_\mu}
                        +\|W\|_{X_\mu}\bigr).
$$
Taking $\omega$ small relative to the three inverse bounds makes
\eqref{in:fixed-point} a contraction by \eqref{in:h-forcing},
proving \eqref{in:w-size} and positivity. The same contraction works
at each weaker weight in the common ball of radius
$C_0s^{2-\mu_3}\|\phi\|$, since $s^{2-\mu}\leq s^{2-\mu_3}$;
uniqueness identifies these solutions.

For fixed $Z$ in the relative neighborhood above,
\begin{equation}\label{in:nonlinear-background}
 |\partial_bN_U(Z)|
  \leq C(1+t)\omega Q_U|Z|,
\end{equation}
with its scaled H\"older version. This follows by differentiating
Taylor's formula and using \eqref{in:radial-loss}. A
$\boldsymbol c$ derivative has $r/\rho$ in place of $1+t$.
The positive neighborhood can be chosen uniformly for nearby
parameters, because $\varepsilon^2\asymp AB$ and $B/B_*$ stays in a
fixed compact interval.

We first treat $b$. Keep $\boldsymbol c,\psi$ fixed and set
$$
 U^{(b)}=\mathcal T_{\boldsymbol z}U_{B_*e^b},\qquad
 Q^{(b)}=\kappa p(U^{(b)})^q,\qquad
 L^{(b)}=\Delta+Q^{(b)}.
$$
Write $W_b=W(b,\boldsymbol c,\psi)$, $W_h=W(b+h,\boldsymbol c,\psi)$,
$Z_h=H_\phi+W_h$, and $Z=H_\phi+W_b$. Set
$$
 C_h=\int_0^1D_ZN_{U^{(b)}}\bigl(Z+\tau(Z_h-Z)\bigr)\,\dd\tau.
$$
Its multiplier norm $X_\mu\to Y_\mu$ is at most $C\omega$.
Subtracting the equations gives
\begin{equation}\label{in:parameter-difference}
\begin{aligned}
 (L^{(b)}+C_h)(W_h-W_b)
  &=-(Q^{(b+h)}-Q^{(b)})(H_\phi+W_h)\\
  &\quad-\bigl(N_{U^{(b+h)}}(Z_h)-N_{U^{(b)}}(Z_h)\bigr).
\end{aligned}
\end{equation}
All three realizations of $L^{(b)}+C_h$ are uniformly invertible by a
Neumann series. To estimate the source, put $d=\mu_3-\mu_2$. The
potential envelope gives
$$
 \sup_{t\geq0}(1+t)e^{-dt}\min\{1,s^2e^{2t}\}
       \leq C_d(1+\ell_s)s^d.
$$
Indeed, write $t=\ell_s-u$ before the transition and $t=\ell_s+u$ after it.
The two factors to bound are respectively
$(1+\ell_s)s^de^{-(2-d)u}$ and
$s^d(1+\ell_s+u)e^{-du}$.
Thus
$$
 \|(\partial_bQ_U)W_h\|_{Y_{\mu_2}}
   \leq C(1+\ell_s)s^{\mu_3-\mu_2}\|W_h\|_{X_{\mu_3}}
   \leq C(1+\ell_s)s^{2-\mu_2}\|\phi\|.
$$
For the harmonic term use \eqref{in:h-forcing-derivative}; for the
nonlinear difference use \eqref{in:nonlinear-background} and the
same multiplier estimate. Integration along the parameter interval
in \eqref{in:parameter-difference} yields local Lipschitz continuity
into $X_{\mu_2}$ and
$$
 \left\|(W_h-W_b)/h\right\|_{X_{\mu_2}}
     \leq C(1+\ell_s)s^{2-\mu_2}B_*\|\psi\|_{C^{2,\theta}}.
$$

To obtain differentiability, divide
\eqref{in:parameter-difference} by $h$. On each compact annulus the
source converges in $C^{0,\theta}$ to
$$
 F_b=-(\partial_bQ^{(b)})(H_\phi+W_b)
       -\partial_bN_{U^{(b)}}(H_\phi+W_b),
$$
where the last derivative holds its argument fixed. The tails are
uniformly small in $Y_{\mu_1}$: the correction terms are bounded by
$(1+t)e^{-(\mu_3-\mu_1)t}$ and the harmonic terms by their degree-two
decay. Also $C_h\to D_ZN_{U^{(b)}}(Z)$ on compact annuli; acting on the
bounded $X_{\mu_2}$ difference quotients, its tails gain
$e^{-(\mu_2-\mu_1)t}$. Applying the fixed inverse at $\mu_1$ proves
convergence of the difference quotients to the unique solution of
\begin{equation}\label{in:parameter-derivative}
 \bigl(L_U+D_ZN_U(Z)\bigr)\partial_bW
  =-(\partial_bQ_U)(H_\phi+W)-\partial_bN_U(Z),
 \qquad Z=H_\phi+W.
\end{equation}
In particular the coefficient derivative acts on $W$ as well as on
$H_\phi$. The source belongs to $Y_{\mu_2}$; compatibility of inverses
gives \eqref{in:w-b}. Comparing this equation at nearby parameters,
using convergence on compact annuli and the same tail bounds, proves
continuity of $\partial_bW$ into $X_{\mu_1}$.

For $\boldsymbol c$, the additional factor $r/\rho$ makes coefficient
differentiation bounded on a single weighted space. The same
subtraction argument at $\mu_3$ gives \eqref{in:w-c}. Differentiation
in $\psi$ gives
$$
 \bigl(L_U+D_ZN_U(Z)\bigr)D_\psi W[\dot\psi]
   =-\bigl(Q_U+D_ZN_U(Z)\bigr)H_{B_*\dot\psi},
$$
which proves \eqref{in:w-psi}. These comparison equations give joint
continuity of the derivatives into $X_{\mu_1}$.

Restriction to the outer annulus is bounded from $X_{\mu_1}$ to
$C^{2,\theta}$, and $\rho\partial_r$ maps its trace to
$C^{1,\theta}$. Since $\Pi_{\mathrm{hi}}W|_{r=\rho}=0$, the claimed
boundary error is exactly
$\mathcal E=B_*^{-1}\Pi_{\mathrm{hi}}\rho\partial_rW|_{r=\rho}$.
The estimates and their parameter versions follow from the corresponding
bounds for $W$.
\end{proof}

\subsection{The corresponding exterior bubble}

The exterior bubble must undergo the same conformal transformation as
the local model. The exact formula is
\begin{equation}\label{in:bubble-transform}
\begin{gathered}
 \mathcal T_{\boldsymbol z}b_\lambda(x)
  =\left(\frac{2\lambda'}{|x-\xi|^2+(\lambda')^2}\right)^a,\\
 C=1+\lambda^2|\boldsymbol z|^2,\quad
 \lambda'=\lambda/C,\quad \xi=\lambda^2\boldsymbol z/C.
\end{gathered}
\end{equation}
To see this, complete the square in
$\lambda^2D_{\boldsymbol z}(x)+|x|^2$. Its coefficient at infinity
is $AC^{-a}$, rather than $A$. Nevertheless, for $r\geq\rho$ and
$\boldsymbol c=\rho\boldsymbol z$ in the fixed neighborhood,
$$
 |\partial_{\boldsymbol c}\mathcal T_{\boldsymbol z}b_\lambda|
        \leq Cs^2\mathcal T_{\boldsymbol z}b_\lambda.
$$

Subtracting the identical transformed bubbles across the interface
leaves the exact difference
$$
 \mathcal T_{\boldsymbol z}U_B
       -\mathcal T_{\boldsymbol z}b_\lambda
       =\mathcal T_{\boldsymbol z}V_B.
$$
By \eqref{in:collar}, its Cauchy data and their parameter derivatives
are bounded after division by $B$. At $\boldsymbol z=0$, its
Dirichlet trace is $B$ and its radial derivative vanishes. For
$\boldsymbol\omega=x/|x|$,
$$
 \left.\partial_{\boldsymbol c}
        (\mathcal T_{\boldsymbol z}V_B)\right|_{r=\rho,\,\boldsymbol z=0}
       [\dot{\boldsymbol c}]
      =2aB\,\dot{\boldsymbol c}\cdot\boldsymbol\omega,
$$
and
$$
\begin{aligned}
 \left.\rho\partial_r\partial_{\boldsymbol c}
        (\mathcal T_{\boldsymbol z}V_B)\right|_{r=\rho,\,\boldsymbol z=0}
       [\dot{\boldsymbol c}]
  &=\bigl(2aB+\rho^2V_B''(\rho)\bigr)
                 \dot{\boldsymbol c}\cdot\boldsymbol\omega\\
  &=2aB\,\dot{\boldsymbol c}\cdot\boldsymbol\omega
       +O(s^2B|\dot{\boldsymbol c}|).
\end{aligned}
$$
The unnormalized radial derivative has the additional factor
$\rho^{-1}$. If $|\boldsymbol z|\rho=\sigma$ is small, the normalized
first derivative formulas differ from those at zero by
$O(B(\sigma+s^2))$. These estimates, together with those of
Proposition~\ref{in:family}, are used in the proof of
Lemma~\ref{match:estimate}.

\section{Boundary operators for the old ends}
\label{old:section}

Fix a positive solution $U_0$ with a finite singular set $F$, noncylindrical Delaunay ends, and trivial $L^2(g_{U_0})$ kernel of
the relative Jacobi operator $\Delta_{g_{U_0}}+n$, where
$g_{U_0}=U_0^{2/a}|\dd x|^2$. This is marked
nondegeneracy in \cite[Definition 3]{MP1999}. Constants $C_{\rm old}$ may
depend on $U_0$, the allowed center set, and the boundary sections;
the Newtonian singular coefficient depends only on the dimension.

Choose a stereographic pole outside $F$ and the allowed center set $K$.
In the resulting flat coordinates put
$$
 L_0=\Delta+V_0,\qquad V_0=\kappa p U_0^{p-1}.
$$
Cut each old end sufficiently far out and remove a small regular cap
about the pole, leaving a bounded smooth domain $D\subset\R^n$ with
$K\Subset D$. Its boundary $\Sigma$ consists of the old-end and cap sections. The normal $\mathbf n$ points out of $D$. Neumann traces from the
eliminated pieces are taken in this same direction, opposite to their
domain outward normal. The symbol $\nu$ remains reserved for the
ratio $\rho_i/r_i$.

For the prescribed core $D_{\rm pre}$ of
Theorem~\ref{thm:insertion}, these sections are moved towards
the punctures and the cap is shrunk so that $D\supset D_{\rm pre}$
and the auxiliary invertibility conditions below hold.
The symbol $D$ denotes this fixed working core throughout the
finite gluing proof; $K$ is unchanged. The radius and coefficient
thresholds may depend on the working core. Only the resulting
global solution and its lower bound are restricted to $D_{\rm pre}$.

\subsection{A common complement for the deficiency spaces}

On an old end write $r=e^{-t}$ and $U_0=r^{-a}(v_\epsilon(t+\phi)+R)$.
The remainder and its cylindrical derivatives decay exponentially
\cite[Theorem~3.3]{MPU1996}; the finer conformal expansion is given in
\cite[Theorem~1]{KMPS1999}.
Fix two numbers
\begin{equation}\label{old:weights}
 0<d_-<d_+<\min\{a,1,\gamma_{\rm old}\},
\end{equation}
where $\gamma_{\rm old}>0$ is a common exponential decay rate for the old remainders.
Write $\mathcal C_d^{m,\theta}$ for the cylindrical H\"older space with
decay $e^{-dt}$. For an additive flat Jacobi field $J$, impose this decay
on $r^aJ$, equivalently on $J/U_0$ since the fixed positive Delaunay
profile is bounded above and below. The cylindrical source is
$r^{a+2}L_0J$. At the regular stereographic pole we require smoothness
in the round gauge. In particular, for $J=U_0j$, conformal covariance gives
$$
 L_0(U_0j)=U_0^p(\Delta_{g_{U_0}}+n)j.
$$
The flat and relative realizations therefore have equivalent weighted
norms on each old end.

Let $m=|F|$. The \emph{deficiency space} $\mathcal W$ is the
$2m$-dimensional space of radial coefficients recording the infinitesimal
neck and phase changes on the old ends. We identify it with a space of
cutoff radial Jacobi fields; their Jacobi residuals decay exponentially,
possibly with polynomial factors. We use the standard deficiency space of
\cite[Section~IV]{MPU1996}. The finite-end
Fredholm theory of \cite[Sections~IV--V]{MPU1996} gives
\begin{equation}\label{old:fredholm}
 L_0:\mathcal C_d^{2,\theta}\oplus\mathcal W
       \longrightarrow\mathcal C_d^{0,\theta}
 \quad\hbox{surjective},
 \qquad \dim\ker L_0=m,
\end{equation}
with the understood cylindrical conjugation on the ends. The operator
on the pure decaying space is injective and has index $-m$. The radial
coefficient map on the extended kernel is injective. Denote its image
in $\mathcal W$ by $Z$.

For the weighted H\"older realization, the same asymptotic decomposition
and stripwise Schauder estimates give the corresponding bounds
\cite[Chapters~6 and~9]{GT2001}.
The absence of indicial exponents in $(0,1)$ restores the prescribed
decay weight.
A fixed boundary-collar extension handles Dirichlet data, and all
inverse constants depend on the fixed old solution, sections, and
chosen weights.

\begin{lemma}\label{old:common-complement}
The old cross sections can be chosen so that the disjoint union $E$
of the old tails has the following properties. Its Dirichlet
operator on the pure decaying space is injective and has index $-m$.
After adjoining $\mathcal W$ it is surjective, with an $m$-dimensional
kernel whose radial coefficient image $N_{\rm tail}$ is a $m$-plane in
$\mathcal W$. There is a single $m$-plane $\mathcal W'$ complementary
to both $Z$ and $N_{\rm tail}$. The whole old operator and the tail Dirichlet operator restricted to decay plus $\mathcal W'$ are
isomorphisms for every $0<d\leq d_+$.
\end{lemma}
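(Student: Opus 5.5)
The plan is to combine the Fredholm theory \eqref{old:fredholm} for the whole old operator with the corresponding theory for the Dirichlet problem on the tails. The choice of cross sections enters only through the requirement that the tail Dirichlet problem have no nontrivial decaying solutions.

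\emph{Step 1: tails sufficiently far out.} On a single old end, the Jacobi operator is a small exponentially decaying perturbation of the Delaunay model operator. For the model on a half-cylinder $[T,\infty)\times\Sph^{n-1}$ with zero Dirichlet data, a decaying solution splits into spherical modes.
\begin{itemize}
\item High modes vanish by the coercivity argument used after \eqref{in:inverse}.
\item Degree-zero and degree-one modes are combinations of Jacobi fields with periodic, linearly growing, or $e^{\pm t}$ periodic factors. None of these decays with weight $d>0$ when $d<1$, except the $e^{-t}$ periodic degree-one fields.
\end{itemize}
The degree-one decaying fields vanish at a zero of their periodic factor. Choosing $T$ generically, for instance away from the discrete zeros of these factors, the Dirichlet problem on the model tail is injective on $\mathcal C^{2,\theta}_d$ for $0<d\le d_+$. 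A perturbation argument then transfers injectivity to the actual tail for $T$ large, using the exponentially small remainder and uniform weighted Schauder estimates. The index is computed by the relative index theorem: the tail has $m$ ends, each contributing the same deficiency as in the closed problem, so the index is $-m$ with injectivity. Adjoining $\mathcal W$ raises the index by $2m$, giving index $m$. Surjectivity follows by duality, because the cokernel of the extended Dirichlet map pairs with decaying solutions of the adjoint problem at weight $-d$; this is the standard linear decomposition lemma of \cite{MPU1996}.

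\emph{Step 2: injectivity of the radial coefficient map.} The kernel of the extended Dirichlet map is then $m$-dimensional. Its radial coefficient map is injective, since a kernel element with zero coefficients is a decaying Dirichlet solution, which Step~1 rules out. So $N_{\rm tail}$ is an $m$-plane.

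\emph{Step 3: common complement.} Both $Z$ and $N_{\rm tail}$ are $m$-planes in the $2m$-dimensional space $\mathcal W$. The set of $m$-planes transverse to a given $m$-plane is open and dense in the Grassmannian, so a common complement $\mathcal W'$ exists. For the whole old operator, the map on decay $\oplus\,\mathcal W'$ is injective, because a kernel element has coefficients in $Z\cap\mathcal W'=0$ and hence decays, and marked nondegeneracy forces it to vanish. It has index $-m+m=0$, so it is an isomorphism. The tail operator is handled identically, using $N_{\rm tail}\cap\mathcal W'=0$ and Step~1.

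\emph{Step 4: all weights at once.} To get the isomorphisms for every $0<d\le d_+$, note that no indicial roots lie in $(0,\min\{a,1,\gamma_{\rm old}\})$. The index is therefore constant in $d$, and kernels at different weights coincide by elliptic regularity of decay.

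The main obstacle is Step~1, namely choosing the sections so that the tail Dirichlet problem has no decaying kernel. The degree-one $e^{-t}$ fields have oscillating periodic factors, and the perturbation from the model must be controlled uniformly in $T$. The first thing to try is to pick $T$ at a fixed phase where the model Dirichlet operator is invertible with a bound independent of $T$, and then absorb the remainder using its $e^{-\gamma_{\rm old}T}$ smallness.
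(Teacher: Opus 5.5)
Your proposal is correct and follows essentially the paper's route: a mode-by-mode analysis of the exact Delaunay half-cylinder, transfer to the actual tails by a Neumann series once the cuts are far out, the common complement $\mathcal W'$ taken from the two open dense sets of complementary $m$-planes, and weight independence from the absence of indicial roots in $(0,d_+]$; the only difference is that the paper counts the index $-1$ per end explicitly by radial variation of constants, where you use the relative index theorem and duality. One correction: the decaying first spherical factor $a v_\epsilon - v_\epsilon'$ never vanishes, because negative Delaunay energy gives $|v_\epsilon'|<a v_\epsilon$, so every cut position has nonzero trace, no generic choice of $T$ is needed, and the uniformity in $T$ you flag as the main obstacle is automatic, since the model depends on $T$ only through a phase on a compact period circle.
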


\begin{proof}
First consider an exact radial Delaunay half-cylinder. Its conjugated
Jacobi equation is
$$
 w_{tt}+\Delta_{\Sph^{n-1}}w-a^2w+\kappa p v_\epsilon^{p-1}w=0.
$$
The radial equation has two independent Jacobi fields with Floquet
multiplier one; one may grow linearly in $t$. Neither decays. The decaying first spherical field
$e^{-t}(av_\epsilon-v_\epsilon')$ has a strictly positive periodic
factor, hence nonzero boundary trace. For degrees at least two,
$$
 a^2+2n-\kappa p v_\epsilon^{p-1}\geq a+2>0.
$$
Integration by parts on the half-cylinder therefore excludes a
decaying field of spherical degree at least two with zero Dirichlet trace. Boundary terms at
infinity vanish by the decay and local elliptic estimates.

Radial variation of constants, the explicit first spherical modes,
and the coercive estimate for degrees at least two give the
inhomogeneous bounds. Local elliptic estimates then give the required
H\"older regularity. In the radial
sector decay imposes one compatibility condition on the source with
zero trace; adjoining the two neutral fields removes that condition
and leaves one kernel direction. Every nonradial decaying Dirichlet problem is invertible. Thus each
end has index $-1$ before extension, and contributes one kernel
direction after the surjective radial extension.

For the original tails the difference from the model operator is
exponentially small after the cuts are moved out. The preceding
estimates and a Neumann series preserve pure-decay injectivity and
extended surjectivity. Index stability gives kernel dimension $m$.
A kernel field with zero radial coefficients is pure decaying, so its
coefficient map is injective. Its image is $N_{\rm tail}$.

Choose $\mathcal W'$ in the intersection of the two open dense
sets of $m$-planes complementary to $Z$ and $N_{\rm tail}$.
Removing the respective kernel by this common complement gives both
isomorphisms. Changing $d$ within $(0,d_+]$ crosses no
indicial exponent and stays below the coefficient decay rate, so the
coefficient planes and the choice of complement are unchanged.
The Sobolev to H\"older argument above gives the asserted realizations.
\end{proof}

The plane $\mathcal W'$ may couple ends, so its boundary operator need
not be selfadjoint. Below, only the compact full Dirichlet realization
uses selfadjointness.

\subsection{The nonlinear boundary map}

The regular cap has no radial coefficient space. Its bounded Jacobi
potential and the divergent first Dirichlet Laplace eigenvalue as the
cap shrinks give Dirichlet invertibility for a small cap.
The Dirichlet implicit function theorem applies, with trace and flux
converted to flat core coordinates by conformal covariance.

\begin{proposition}\label{old:nonlinear-graph}\label{old:graph}
For full boundary increments $\gamma\in C^{2,\theta}(\Sigma)$ near
zero, the eliminated tails and cap have a unique positive solution in a
fixed neighborhood of the old solution in the period-aligned coordinates,
with old radial parameter increment in $\mathcal W'$. Its physical
Neumann trace has the form
\begin{equation}\label{old:graph-expansion}
\begin{aligned}
 \mathcal N_{\mathrm{old}}(\gamma)&=\mathcal N_{\mathrm{old}}(0)+T\gamma+\mathcal E(\gamma),\\
 \|\mathcal E(\gamma)\|_{C^{1,\theta}}
       &\leq C_{\rm old}\|\gamma\|_{C^{2,\theta}}^2,\\
 \|D\mathcal E(\gamma)\|
       &\leq C_{\rm old}\|\gamma\|_{C^{2,\theta}}.
\end{aligned}
\end{equation}
This is a $C^2$ boundary map. At a sufficiently small boundary
increment its derivative represents every Jacobi field on the
eliminated pieces whose radial parameter derivative belongs to
$\mathcal W'$ and whose physical remainder has decay $d_-$. In
particular it represents every Jacobi field with zero radial coefficients and positive
exponential decay there.
\end{proposition}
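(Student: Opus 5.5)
We would obtain the graph by a Dirichlet implicit function theorem on the eliminated pieces, that is, the disjoint union $E$ of old tails together with the regular cap. The unknowns are a pure-decaying correction $w\in\mathcal C_{d_-}^{2,\theta}$ on $E$, a radial parameter increment $\sigma\in\mathcal W'$ (realized by moving the neck and phase parameters of the Delaunay profiles on the ends), and a correction on the cap. Concretely, on each old end we write the candidate solution as $U_0^{(\sigma)}+\chi H_\gamma+w$. Here $U_0^{(\sigma)}$ is the background with its end parameters shifted by $\sigma$ and period-aligned, and $H_\gamma$ is a fixed collar extension of the boundary increment, cut off by $\chi$ near $\Sigma$. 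The equation to solve is
$$
\Phi(\gamma,\sigma,w):=\Delta(U_0^{(\sigma)}+\chi H_\gamma+w)+\kappa(U_0^{(\sigma)}+\chi H_\gamma+w)^p=0,
\qquad w|_\Sigma=0 .
$$
The map $\Phi$ takes values in $\mathcal C_{d_-}^{0,\theta}$ after cylindrical conjugation, because the residual of the parameter-shifted Delaunay background decays exponentially. This uses the expansion of \cite{MPU1996,KMPS1999}, with $d_-$ below $\gamma_{\rm old}$.

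The partial derivative of $\Phi$ in $(\sigma,w)$ at $(0,0,0)$ is the tail Dirichlet operator of $L_0$ on pure decay plus $\mathcal W'$. By Lemma~\ref{old:common-complement} this operator is an isomorphism for $d=d_-$ on the tails. On the cap we use Dirichlet invertibility for a small cap, which holds because the potential $V_0$ is bounded while the first Dirichlet eigenvalue diverges as the cap shrinks. Smoothness of $\Phi$ also has to be checked. The nonlinearity $Z\mapsto (U+Z)^p$ is smooth on the relative neighborhood $|Z|/U<1/2$, and the weighted norms are relative to $U_0$, so $\Phi$ is $C^2$ (indeed smooth) by the same Taylor estimates used in Proposition~\ref{in:family}. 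The only dependence to watch is the parameter dependence through $\sigma$. The derivative $\partial_\sigma U_0^{(\sigma)}$ grows linearly in $t$ for the neck direction, but this growth is exactly what the deficiency space $\mathcal W$ absorbs: the Jacobi residual of the cutoff field decays exponentially, possibly with polynomial factors, and these factors are absorbed by passing from $d_+$ to $d_-$.

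The implicit function theorem therefore gives unique $C^2$ maps $\gamma\mapsto(\sigma(\gamma),w(\gamma))$ in a fixed neighborhood, and positivity follows from the relative smallness. The Neumann trace $\mathcal N_{\rm old}(\gamma)$ is then a $C^2$ composition of this solution map with the bounded trace operator $C^{2,\theta}$ to $C^{1,\theta}$ on $\Sigma$, taken with the sign convention fixed by $\mathbf n$ and converted to flat core coordinates by conformal covariance. Taylor's theorem with the uniform $C^2$ bound on the fixed neighborhood gives the expansion \eqref{old:graph-expansion}, with $T=D\mathcal N_{\rm old}(0)$.

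For the representation claim we differentiate the equation at a small $\gamma$. The derivative $D_\gamma(\text{solution})[\dot\gamma]$ is a Jacobi field on $E$ with boundary trace $\dot\gamma$, radial coefficient $D\sigma[\dot\gamma]\in\mathcal W'$, and remainder of decay $d_-$. Conversely, let $J$ be any Jacobi field of that type for the linearization at $\gamma$, and set $\dot\gamma=J|_\Sigma$. Then $J$ minus the derivative of the solution in the direction $\dot\gamma$ is a Jacobi field with zero trace, lying in decay plus $\mathcal W'$. This difference vanishes by injectivity of the linearized isomorphism at $\gamma$, which persists from $\gamma=0$ by a Neumann series. A field with zero radial coefficients and positive exponential decay $d\le d_+$ lies in this class once it is compared at weight $\min(d,d_-)$, using that the isomorphisms of Lemma~\ref{old:common-complement} hold for every $0<d\le d_+$.

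The main obstacle is this last step. We must make sure that the perturbed linearization is still invertible on decay plus the same plane $\mathcal W'$, even though the radial Jacobi fields at the perturbed Delaunay parameters move. The plan is to realize the parameter variations through the period-aligned family, so that the deficiency fields depend continuously on $\sigma$. This keeps the operator difference small in norm, and then a Neumann series applies.
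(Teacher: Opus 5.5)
Your overall architecture matches the paper: a Dirichlet implicit function theorem on tails plus cap, the isomorphism of Lemma~\ref{old:common-complement}, the Neumann trace as a bounded observation, Taylor's theorem, and subtraction of a graph derivative with the same trace. There is, however, a genuine gap: the implicit function step fails as you have set it up.

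\textbf{Where it fails.} You take the unknown $w\in\mathcal C^{2,\theta}_{d_-}$ in the \emph{physical} cylindrical variable $t$, with target $\mathcal C^{0,\theta}_{d_-}$. A neck change $\epsilon_j\neq\epsilon_j^0$ changes the period $T(\epsilon_j)$, so in physical $t$ the profiles $v_{\epsilon_j}(t+\phi_j)$ and $v_{\epsilon_j^0}(t+\phi_j^0)$ drift out of phase. As a result, the conjugated coefficient difference $v_{\epsilon_j}^q-v_{\epsilon_j^0}^q$ is of order one once $t\gtrsim1/|\epsilon_j-\epsilon_j^0|$, and its parameter derivative grows like $1+t$. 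Calling the background ``period-aligned'' does not change this, since any physical profile with neck $\epsilon_j$ has period $T(\epsilon_j)$. Two consequences follow.
\begin{itemize}
\item $\Phi(\gamma,\sigma,w)-\Phi(\gamma,0,w)$ contains $\kappa p\bigl[(U_0^{(\sigma)})^q-U_0^q\bigr]w$. This does not tend to zero in $\mathcal C^{0,\theta}_{d_-}$ as $\sigma\to0$ when $w$ decays exactly at rate $d_-$.
\item $\partial_\sigma\Phi$ contains $\kappa pq\,(U_0^{(\sigma)})^{q-1}\partial_\sigma U_0^{(\sigma)}\,w$, of size $(1+t)e^{-d_-t}$, which does not lie in the target at all.
\end{itemize}
So $\Phi$ is not $C^1$ on the space where its linearization is invertible. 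The polynomial factors you mention are absorbed only in the background residual, not in this coupling with $w$. Taking $w\in\mathcal C_{d_+}$ with target $\mathcal C_{d_-}$ does not rescue this, because it destroys surjectivity. Continuity of the finite-dimensional deficiency fields does not help either: the discontinuity sits in the multiplication operator acting on the infinite-dimensional remainder. For the same reason, your closing Neumann series for the perturbed physical linearization does not apply.

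\textbf{The missing idea.} Reparametrize the whole problem, not just the background.
\begin{itemize}
\item Pull back by $t=F_w(s)$, equal to the identity on the boundary collar and affine far out with slope $c_j(w)=T(\epsilon_j)/T(\epsilon_j^0)$. The new leading profile then has the fixed period $T(\epsilon_j^0)$ in $s$ and bounded parameter derivatives.
\item Write the solution as $A_w\circ F_w+H\gamma+e$, with $e\in\mathcal C^{2,\theta}_{d_+,0}$ living in the variable $s$ and never composed with $F_w$. Composing it would lose a derivative under parameter differentiation and rescale its weight by $c_j$.
\item Only the fixed old remainder is composed with $F_w$. Its polynomial factors of degree at most two are absorbed by the gap $d_+<\gamma_{\rm old}$, which makes the pulled residual $C^2$.
\end{itemize}

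\textbf{Consequence for the representation claim.} You must convert a physical zero-trace Jacobi field $J^0$, with $h=r^aJ^0$ and radial coefficients $\zeta\in\mathcal W'$, into the aligned gauge:
$$
\widehat e_{J^0}=h\circ F-(\partial_wA_{w_*})\circ F\,\zeta+\frac{\dot F_\zeta}{F_s}\,\partial_se_*.
$$
You then check that this lies in $\mathcal C^{2,\theta}_{\widetilde d,0}$ for a lower weight $\widetilde d<\min\{d_+,c_{\min}d_-\}$. This uses the slope constraints $c_{\max}d_-<d_+$ and $c_{\min}\min\{1,\gamma_{\rm old}\}>d_+$. The same constraints show that the physical graph derivatives decay at rate $d_-$. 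The perturbation argument is then run for the aligned linearization at weight $\widetilde d$, not for the physical one at $d_-$.
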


\begin{proof}
Let $w=(\epsilon_j-\epsilon_j^0,\phi_j-\phi_j^0)_j\in\mathcal W'$.
Choose an absolute cylindrical factor $A_w$ equal to the old solution
on a boundary collar and to the new radial model plus the fixed old
decaying remainder far out.
Use $t=F_w(s)$, equal to the identity on that collar and to
$c_j(w)s+b_j(w)$ far out on the $j$th end, where
$$
 c_j(w)=T(\epsilon_j)/T(\epsilon_j^0).
$$
Choose $b_j$ smoothly and place the transition on a fixed compact
interval separated from the collar.
The pulled operator on $V(s,\omega)$ is
\begin{equation}\label{old:pulled-operator}
 F_s^{-2}V_{ss}-F_{ss}F_s^{-3}V_s+
 \Delta_{\Sph^{n-1}}V-a^2V+\kappa V^p.
\end{equation}
With $z_\epsilon(\vartheta)=v_\epsilon(T(\epsilon)\vartheta/(2\pi))$,
the far-out leading profile is
$$
 z_{\epsilon_j}\left(\frac{2\pi s}{T(\epsilon_j^0)}+
              \frac{2\pi(b_j+\phi_j)}{T(\epsilon_j)}\right).
$$
It solves the pulled equation there and has bounded parameter
derivatives on the fixed period circle near each positive old neck.

Let $H\gamma$ be a bounded cylindrical extension supported in the
collar where $F_w$ is the identity, with flat-section trace
$r^a\gamma$ and the fixed conformal conversion included.
Write $V=A_w\circ F_w+H\gamma+e$, with
$e\in\mathcal C_{d_+,0}^{2,\theta}$; the subscript $0$ denotes zero
full boundary trace.
The pulled residual $\mathcal F(e,w,\gamma)$ is $C^2$ into the
weighted source space.
Indeed, the variable remainder $e$ is never composed with $F_w$,
so parameter differentiation requires no derivatives beyond $e_{ss}$.
Only the fixed smooth old remainder is composed with $F_w$; its first
two parameter derivatives acquire polynomial factors of degree at most
two, absorbed by the strict decay gap in \eqref{old:weights} after
shrinking the parameter neighborhood. The fixed positive profiles
preserve positivity nearby.
At the base point, the derivative in $(e,w)$ is the physical Dirichlet
Jacobi operator, up to an invertible triangular change of remainder:
the coordinate derivative of the base residual vanishes.
Lemma~\ref{old:common-complement} therefore makes it an isomorphism.
The implicit function theorem gives a positive $C^2$ solution graph,
and the fixed collar coordinates make its Neumann trace a bounded
linear observation. Taylor's theorem gives \eqref{old:graph-expansion}.

For the physical tangent identification, let $c_{\min}$ and
$c_{\max}$ bound the slopes $c_j(w)$ throughout a smaller neighborhood,
and arrange
$$
 c_{\max}d_-<d_+,\qquad
 c_{\min}\min\{1,\gamma_{\rm old}\}>d_+,\qquad
 0<\widetilde d<\min\{d_+,c_{\min}d_-\}.
$$
Fix $\widetilde d$ on this neighborhood.
Lemma~\ref{old:common-complement} gives the base isomorphism at
this weight with the same complement.
At a current graph point $(e_*,w_*,\gamma_*)$, the derivative extends to
$$
 \mathcal A_*:
 \mathcal C_{\widetilde d,0}^{2,\theta}\oplus\mathcal W'
 \longrightarrow\mathcal C_{\widetilde d}^{0,\theta}.
$$
Period alignment keeps the leading profiles and principal coefficients
uniformly close to their base values. The parameter derivatives of the
fixed old remainder decay faster than $d_+$, and the terms involving
$e_*$ decay at $d_+$, so the finite-dimensional derivatives are close
in the $\widetilde d$ source norm as well.
Thus $\mathcal A_*$ is a small perturbation of the base isomorphism
and remains invertible.
Graph derivatives have physical remainders of decay $d_-$: differentiation
at fixed physical $t$ adds at most a factor $1+t$ to the solved
remainder, while $d_+/c_{\max}>d_-$.

Let $J$ be a physical Jacobi field with the stated decay and radial
coefficients in $\mathcal W'$. Subtract the graph derivative with its
full Dirichlet trace, obtaining a zero-trace field $J^0$ with
$h=r^aJ^0$ and radial coefficient vector $\zeta\in\mathcal W'$.
On the far tails,
$$
 h(t)=\partial_w v_{w_*}(t)\zeta+h_{\rm dec}(t),
 \qquad h_{\rm dec}\in\mathcal C_{d_-}^{2,\theta},
$$
where $v_w$ is the physical leading radial model.
Set $F=F_{w_*}$ and $\dot F_\zeta=D_wF_{w_*}\zeta$.
For the current physical factor $V_{\rm phys}$, differentiating
$V_{\rm phys}\circ F=A_{w_*}\circ F+H\gamma_*+e_*$ gives the
period-aligned remainder derivative
$$
 \widehat e_{J^0}
 =h\circ F-(\partial_w A_{w_*})\circ F\,\zeta
   +\frac{\dot F_\zeta}{F_s}\,\partial_s e_* .
$$
The collar extension has zero derivative for this difference field.
Far out, $\partial_w A_{w_*}=\partial_wv_{w_*}$, so the first two
terms reduce to $h_{\rm dec}\circ F$, with decay at least
$c_{\min}d_-$; the last is $O((1+s)e^{-d_+s})$ in
$C^{2,\theta}$ on unit strips.
For this last assertion, $\dot F_\zeta$ is supported away from the
boundary collar, where the equation has smooth coefficients and no
collar forcing. Interior Schauder estimates and differentiation of the
equation give the additional spatial derivatives of $e_*$ with the
same decay, without extra boundary regularity of $\gamma_*$.
Near the boundary, $F$ is the identity, $\dot F_\zeta=0$, and
$A_w$ is independent of $w$. Hence
$\widehat e_{J^0}\in\mathcal C_{\widetilde d,0}^{2,\theta}$.
Differentiation at zero residual yields
$$
 \mathcal A_*(\widehat e_{J^0},\zeta)=0.
$$
Invertibility forces both entries to vanish, so $J^0=0$.
This proves the stated tangent identification. Marked fields have zero
radial coefficients and decay at every rate below the first nonradial
exponent $1$, so they are included; the cap uses compact Dirichlet
uniqueness.
\end{proof}

\subsection{The inverse on a perforated core}

Let $B(y_i,r_i)$, $i=1,\ldots,N$, be disjoint original balls with centers in $K$, and
put $\rho_i=\nu r_i$, where $0<\nu<1/4$. Denote
$$
\begin{gathered}
 D_N=D\setminus\bigcup_i\overline{B(y_i,\rho_i)},\qquad
 c_N=\sum_i\rho_i^{n-2},\\
 \Gamma_i=\partial B(y_i,\rho_i),\qquad
 \Gamma_N=\bigcup_{i=1}^N\Gamma_i.
\end{gathered}
$$
Up to dimensional normalization, $c_N$ sums the individual capacities
and bounds the union's capacity by subadditivity
\cite[Chapter~2]{AdamsHedberg1996}.

\begin{proposition}\label{old:perforated-inverse}\label{old:inverse}
For sufficiently small $c_N$ the problem
\begin{equation}\label{old:linear-boundary-problem}
 L_0v=f\text{ in }D_N,\qquad v=g\text{ on }\Gamma_N,
 \qquad \partial_{\mathbf n} v-T(v|_\Sigma)=r\text{ on }\Sigma
\end{equation}
has a unique solution. For every fixed $U\Subset D$ containing all
new balls its zero-new-trace source inverse $G_N^0$ has a kernel
satisfying
\begin{equation}\label{old:green-kernel}
 |G_N^0(x,z)|\leq C_n|x-z|^{2-n}+C_{\rm old},
 \qquad x,z\in U\cap D_N.
\end{equation}
If the source is supported in $U$ and separated from a fixed old
collar, the $C^{2,\theta}$ norm of the solution on that collar is
at most $C_{\rm old}\|f\|_{L^1}$. Smooth interface forcing has
a uniformly bounded $C^0$ response on $U\cap D_N$, with uniform
scaled local $C^{2,\theta}$ estimates on the new collars and interior
patches. Ordinary $C^k$ estimates hold on compact subsets staying
a fixed positive distance from the holes. All these constants are
independent of the number of holes and their radius ratios.
\end{proposition}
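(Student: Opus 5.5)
The plan is a perturbation argument from the unperforated problem. First treat $N=0$, where the problem is $L_0v=f$ in $D$ with the Robin-type condition $\partial_{\mathbf n}v-T(v|_\Sigma)=r$ on $\Sigma$. By Proposition~\ref{old:graph}, $T$ is the derivative of the old-piece Neumann map. A solution of the $N=0$ problem therefore glues to a Jacobi field on the full old manifold: on the eliminated pieces its radial coefficients lie in $\mathcal W'$ and its remainder has decay $d_-$. Lemma~\ref{old:common-complement} says the whole old operator restricted to decay plus $\mathcal W'$ is an isomorphism, so this field vanishes. Hence the $N=0$ problem is injective. It is Fredholm of index zero, since $T$ is a compact perturbation of a Dirichlet-to-Neumann operator of order one. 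So the unperforated inverse $G_0$ exists. Its kernel is the Newtonian kernel plus a smooth correction on $U\times U$, which gives $|G_0(x,z)|\le C_n|x-z|^{2-n}+C_{\rm old}$. Smoothness of $G_0(\cdot,z)$ on the old collar, uniformly for $z\in U$, gives the $C_{\rm old}\|f\|_{L^1}$ bound there.

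To pass to the perforation, I would write $G_N^0=G_0-\mathcal P$. The correction $\mathcal P(\cdot,z)$ solves the homogeneous problem with trace $G_0(\cdot,z)$ on $\Gamma_N$. I would build $\mathcal P$ from single-layer-type charges $\sum_i\rho_i^{n-2}c_i|x-y_i|^{2-n}$ placed on the holes and correct them with $G_0$. This is a linear system for the hole data. Its off-diagonal interaction terms are bounded by $(\rho_i\rho_j/|y_i-y_j|^2)^a$, and Lemma~\ref{pot:background} together with $\nu<1/4$ bounds this in row sums. The potential term $V_0$ and the smooth part of $G_0$ contribute $O(C_{\rm old}\,c_N)$, which is small when $c_N$ is small. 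This gives invertibility by a Neumann series uniformly in $N$ and in the radius ratios.

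For the bounds, I would use the maximum principle instead of series estimates. Outside a fixed neighbourhood of $\Sigma$ the operator $L_0=\Delta+V_0$ can be treated as a small zeroth-order perturbation of $\Delta$ on the holes. The positive harmonic function solving the capacitary problem with boundary data $G_0(\cdot,z)\ge0$ on the holes lies below the capacitary potential. Because the holes sit in the complement of a ball where the data is bounded, this yields the same $C_n|x-z|^{2-n}+C_{\rm old}$ envelope for $\mathcal P$. Scaled Schauder estimates on the annuli $\rho_i<|x-y_i|<2\rho_i$ then give the local $C^{2,\theta}$ claims for interface forcing. Uniqueness follows from the same smallness: a solution with zero data is controlled by $C\,c_N$ times itself.

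I expect the main obstacle to be making the constants independent of the number of holes when $L_0$ is not positive. The maximum-principle comparison needs a positive supersolution. I would obtain one as $U_0$ times a small harmonic correction, or use the positive $N=0$ Dirichlet solution on a slightly larger core. Positivity survives only in the form of a capacity-smallness condition, so this is where the hypothesis that $c_N$ is small is genuinely used.
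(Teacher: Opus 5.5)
Your treatment of the unperforated problem is essentially sound. It parallels the paper's step in which a kernel element of $D_0^\Sigma-T$ glues to a whole old Jacobi field with radial coefficients in $\mathcal W'$, which Lemma~\ref{old:common-complement} excludes.

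The gap is in the passage to the perforated core, which is the real content of the proposition. You never produce an inverse of the indefinite operator $L_0$ on $D_N$ whose bounds depend only on $c_N$, and your Neumann series does not supply one.

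\begin{itemize}
\item As written, the ansatz $\sum_i\rho_i^{n-2}c_i|x-y_i|^{2-n}$ consists of monopoles. These cannot match the highly nonconstant trace of $G_0(\cdot,z)$ on a hole that $z$ approaches, and that is exactly the regime of \eqref{old:green-kernel}. All spherical modes on every sphere are needed.
\item More seriously, the row-sum bound from Lemma~\ref{pot:background} uses the packing condition \eqref{eq:packing}. That is not a hypothesis of the proposition, which assumes only disjointness, $\nu<1/4$, and small $c_N$. Small $c_N$ alone does not control these sums: clustering very many very small holes at distance about $2r_i$ from the $i$th hole keeps $c_N$ small while the $i$th row sum becomes arbitrarily large.
\item Even under packing, the bound is $C\nu^{2a}$ with a packing constant $C$, which is not less than one merely because $\nu<1/4$. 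This is why the paper uses that kind of Neumann series only in Proposition~\ref{old:product-dtn}, in the $B_i$-weighted norms under \eqref{old:rows}, and for sufficiently small universal $\nu$.
\item The assertion that a solution with zero data is controlled by $Cc_N$ times itself is not established in any norm, so uniqueness is not proved.
\end{itemize}

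The maximum-principle fallback fails for a structural reason. $G_0(\cdot,z)$ need not have a sign, and $-L_0$ has no positive supersolution on the core, even away from a collar of $\Sigma$. Indeed $U_0$ is a positive subsolution, since $L_0U_0=\kappa(p-1)U_0^p>0$. For a cutoff $\zeta$ decaying slowly along the long truncated ends,
$\int\bigl(|\nabla(\zeta U_0)|^2-V_0\zeta^2U_0^2\bigr)=\int U_0^2|\nabla\zeta|^2-\kappa(p-1)\int\zeta^2U_0^{p+1}<0$.
You flag this as the main obstacle, but saying that positivity survives as a capacity-smallness condition is not an argument.

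The missing idea is to obtain uniformity spectrally rather than through positivity. The paper proceeds as follows.
\begin{itemize}
\item It first adjusts the cap so that the full Dirichlet operator on $D$ is invertible.
\item Capacitary cutoffs $\chi_i$ with $\int|\nabla\chi_i|^2\le C_n\rho_i^{n-2}$, together with min--max, then give a spectral gap about zero for $B_{D,N}=-\Delta-V_0$ on $D_N$. This gap is uniform in $N$ and the radius ratios.
\item The identity $B_{D,N}^{-1}=\int_0^{t_0}e^{-tB_{D,N}}\,\dd t+e^{-t_0B_{D,N}/2}B_{D,N}^{-1}e^{-t_0B_{D,N}/2}$ splits the kernel into two parts. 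The short-time part is dominated by the Euclidean heat kernel, giving $C_n|x-z|^{2-n}$ with a purely dimensional constant. The long-time part is bounded by $C_{\rm old}$ through the gap and Gaussian row bounds, with no positivity of the resolvent needed.
\item The old interface is then restored using the smoothing estimate $\|D_N^\Sigma-D_0^\Sigma\|\le C_{\rm old}c_N^{1/2}$ and the invertibility of $D_0^\Sigma-T$.
\end{itemize}
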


\begin{proof}
Keep the old sections fixed and choose a sufficiently small regular cap
with invertible Dirichlet Jacobi operator.
An arbitrarily small outward change of its outer radius makes the full Dirichlet
operator on $D$ invertible: strict domain monotonicity, by min--max and
unique continuation \cite{Aronszajn1957}, leaves only countably many
exceptional radii. The cap radius is chosen within the preceding
working-core convention, preserving $D\supset D_{\rm pre}$.

Put $B_{D,N}=-\Delta-V_0$ with full Dirichlet data. Choose
$\chi_i=1$ on $B(y_i,\rho_i)$, supported in $B(y_i,2\rho_i)$, with
$\int|\nabla\chi_i|^2\leq C_n\rho_i^{n-2}$, in a fixed neighborhood
separated from $\Sigma$. Their supports are disjoint
because $2\rho_i+2\rho_j<r_i+r_j$. For $\chi=\sum_i\chi_i$,
multiplication by $1-\chi$ changes each fixed smooth trial function by
$O(c_N^{1/2})$ in $H^1$; the support volume is at most
$C_n\rho_{\max}^2c_N$. Min--max and domain monotonicity therefore give
convergence of the fixed eigenvalues and a uniform gap about zero,
as in the capacitary perturbation method of \cite{RauchTaylor1975}.
The uniformly coercive shifted form, with
$c_{\rm shift}>\|V_0\|_\infty+1$, and spectral calculus give a uniform
$H^{-1}\to H^1$ inverse bound.

For sufficiently small fixed $t_0>0$, use the signed-resolvent identity
$$
 B_{D,N}^{-1}=\int_0^{t_0}e^{-tB_{D,N}}\,\dd t+
 e^{-t_0B_{D,N}/2}B_{D,N}^{-1}e^{-t_0B_{D,N}/2}.
$$
Dirichlet heat-kernel domination \cite[Chapters~1 and~3]{Davies1989}
bounds the first kernel by $C_n|x-z|^{2-n}$, choosing
$e^{t_0\|V_0\|_\infty}\leq e$. The gap and the Gaussian $L^2$ row
bounds make the second kernel bounded by $C_{\rm old}$; no positivity
of the resolvent is required.

Let $D_N^\Sigma$ be the core Dirichlet-to-Neumann map with zero new
trace. The no-hole Poisson solution $u_0$ for
$\varphi\in H^{1/2}(\Sigma)$ has uniformly bounded interior jets near
the holes. Correcting $(1-\chi)u_0$ by the preceding inverse gives,
after zero extension across the holes,
$$
 \|u_N-u_0\|_{H^1(D)}
 \leq C_{\rm old}\|\chi u_0\|_{H^1(D)}
 \leq C_{\rm old}c_N^{1/2}\|\varphi\|_{H^{1/2}(\Sigma)}.
$$
The difference is homogeneous with zero trace on a fixed collar of
$\Sigma$, so boundary regularity gives, for every fixed $s$,
$$
 \|D_N^\Sigma-D_0^\Sigma\|_{H^{1/2}(\Sigma)\to H^s(\Sigma)}
 \leq C_{{\rm old},s}c_N^{1/2}.
$$

Weighted regularity extends $T:H^{1/2}\to H^{-1/2}$. Remote-end
responses are smoothing, and changing $\mathcal W'$ adds finite-rank
smooth terms. The local calculus \cite[Theorems~11.17--11.18]{Grubb2009}
therefore gives $D_0^\Sigma-T$ principal symbol $2|\xi|$ and index
zero by deformation to $2(1-\Delta_\Sigma)^{1/2}$. Its kernel glues
to a whole old Jacobi field with coefficients
in $\mathcal W'$, excluded by Lemma~\ref{old:common-complement}.
Thus $D_N^\Sigma-T$ remains uniformly invertible by the smoothing
perturbation, without selfadjointness of $T$.

Solve the full Dirichlet problem and correct its old normal trace
using $(D_N^\Sigma-T)^{-1}$ and the old-boundary Poisson operator.
A fixed-collar extension of smooth old data and the separated
Dirichlet kernel bound give this Poisson operator a uniform bound on
$U$. For a source pole in $U$, fixed-collar estimates bound its old
normal trace and hence its interface correction by $C_{\rm old}$.
This proves \eqref{old:green-kernel}; differentiation on fixed collars
gives the stated separated-source and interface estimates.
Finally, the new collars are disjoint fixed-shape annuli after
rescaling, with uniformly small smooth potential
$\rho_i^2V_0(y_i+\rho_i\cdot)$. Local boundary and interior estimates
\cite[Chapters~6 and~8]{GT2001} therefore have constants independent
of $N$ and all radius ratios.
\end{proof}

\subsection{The Poisson operator}

Let $P_Ng$ solve \eqref{old:linear-boundary-problem} with $f=r=0$.
Boundary functions on a new sphere are measured after pullback to
the unit sphere. If $b_i=\|g_i\|_{C^{2,\theta}(\Sph^{n-1})}$, put
$q_g=\sum_i b_i\rho_i^{n-2}$.

\begin{lemma}\label{old:poisson}
For all $x\in D_N$,
\begin{equation}\label{old:poisson-bound}
 |P_Ng(x)|\leq C_n\sum_i
   \frac{b_i\rho_i^{n-2}}{(\rho_i^2+|x-y_i|^2)^{(n-2)/2}}
      +C_{\rm old}q_g.
\end{equation}
Its fixed old-collar $C^{2,\theta}$ norm is at most
$C_{\rm old}q_g$. The same pointwise envelope controls scaled local
$C^{2,\theta}$ norms on the new collars and on interior patches
whose size is a fixed small multiple of their distance to the holes,
capped at a fixed old radius.
\end{lemma}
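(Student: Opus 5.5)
We build $P_Ng$ by superposition: one explicit harmonic extension per hole, followed by a single correction through the inverse of Proposition~\ref{old:perforated-inverse}. For each $i$, let $\Phi_i$ be the exterior harmonic extension of $g_i$ from $\Gamma_i$ to $\R^n\setminus B(y_i,\rho_i)$ that decays at infinity. In degree $\ell$ this is $(\rho_i/|x-y_i|)^{n-2+\ell}$ times the spherical harmonic. Hence
$$
 |\Phi_i(x)|\le C_nb_i\rho_i^{n-2}|x-y_i|^{2-n},
$$
with the same bound for the scaled derivatives $|x-y_i|^k|\nabla^k\Phi_i|$ up to order two and for the H\"older seminorm on dyadic annuli. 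This follows from the $C^{2,\theta}$ bound on $g_i$ and the standard exterior Schauder estimate after rescaling to the unit sphere. Multiply $\Phi_i$ by a cutoff $\zeta_i$ equal to $1$ on $B(y_i,r_i/2)$ and supported in $B(y_i,r_i)$; by disjointness of the original balls these supports are disjoint. Set $\Psi=\sum_i\zeta_i\Phi_i$. Then $\Psi=g$ on $\Gamma_N$ and $\Psi=0$ near $\Sigma$, since all centers lie in $K\Subset D$. Moreover $|\Psi|$ is bounded by the envelope in \eqref{old:poisson-bound}, because on $\{|x-y_i|\ge\rho_i\}$ we have $|x-y_i|\asymp(\rho_i^2+|x-y_i|^2)^{1/2}$.

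Next, write $P_Ng=\Psi+v$, where $v$ has zero trace on $\Gamma_N$, satisfies the homogeneous old boundary condition on $\Sigma$, and solves $L_0v=f:=-L_0\Psi$. The source splits as
$$
 f=-\sum_i\zeta_iV_0\Phi_i-\sum_i\bigl(2\nabla\zeta_i\cdot\nabla\Phi_i+\Phi_i\Delta\zeta_i\bigr).
$$
The first sum is bounded by $\|V_0\|_{L^\infty(U)}\sum_ib_i\rho_i^{n-2}|x-y_i|^{2-n}\mathbf 1_{B(y_i,r_i)}$. The second sum is supported on the annuli $r_i/2\le|x-y_i|\le r_i$ and bounded there by $Cb_i\rho_i^{n-2}r_i^{-n}$. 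We then apply the kernel bound \eqref{old:green-kernel}:
$$
 |v(x)|\le\int\bigl(C_n|x-z|^{2-n}+C_{\rm old}\bigr)|f(z)|\,\dd z.
$$
The constant part of the kernel contributes at most $C_{\rm old}\|f\|_{L^1}\le C_{\rm old}q_g$, since each term has $L^1$ mass at most $Cb_i\rho_i^{n-2}$; for the first sum this uses $r_i^2\le1$. For the Newtonian part, the annulus term is $b_i\rho_i^{n-2}$ times the Newtonian potential of a normalized bump of mass $O(1)$ at scale $r_i$. That potential is at most $C_n(r_i^2+|x-y_i|^2)^{-a}$, which is at most the $i$th envelope term because $\rho_i\le r_i$. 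The $V_0$ term is $C_{\rm old}b_i\rho_i^{n-2}$ times $\mathcal I_2(|\cdot-y_i|^{2-n}\mathbf 1_{B(y_i,r_i)})$. This potential is at most $Cr_i^2|x-y_i|^{2-n}$ (with a logarithmic factor near $y_i$ only in $n=4$, and in any case $O(r_i^{2-\epsilon}|x-y_i|^{2-n+\epsilon})$). So for small $r_{\max}$ this term is a fraction of the envelope, or is absorbed as follows. To avoid the dependence of $\|V_0\|_\infty$ on the old solution entering $C_n$, I would instead put the $\zeta_iV_0\Phi_i$ correction into the left side: use the perturbed exterior extension solving $L_0\Phi_i=0$ on $B(y_i,r_i)\setminus B(y_i,\rho_i)$. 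This is a small perturbation, since the rescaled potential $r_i^2V_0$ is small. The remaining cutoff errors are then only of the annulus type, which keeps the dimensional constant $C_n$ in front of the envelope.

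Finally, uniqueness from Proposition~\ref{old:perforated-inverse} identifies $\Psi+v$ with $P_Ng$. The old-collar $C^{2,\theta}$ bound follows from the separated-source statement of that proposition: $\Psi$ vanishes near $\Sigma$, and $\|f\|_{L^1}\le Cq_g$. For the scaled local estimates on new collars and interior patches, apply the rescaled interior and boundary Schauder estimates. These are uniform by the last paragraph of the proof of Proposition~\ref{old:perforated-inverse}, with $C^{2,\theta}$ trace $g_i$ on the collars, applied to $L_0P_Ng=0$. The $C^0$ envelope on the doubled patch then controls the norm, since the envelope varies by bounded factors on patches of size comparable to the distance to the holes.

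The main obstacle is the interaction between holes. It must be checked that the sum of the Newtonian parts does not produce cross terms beyond $\sum_i$ envelope$_i$. Here it does not, because $f$ is a sum of pieces each localized in $B(y_i,r_i)$, and the Newtonian part of the kernel is linear, so no packing bound is needed. The genuinely delicate point is that \eqref{old:green-kernel} is stated only for $x,z\in U\cap D_N$ with zero new trace. This is exactly the setting used, but near $\Gamma_i$ one must make sure the Green kernel bound is used for $z$ in the support of $f$, which stays at distance at least $\rho_i$ from $\Gamma_i$ in the perturbed-extension version. That is immediate.
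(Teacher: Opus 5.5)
Your proof is correct, but it is organized differently from the paper's.

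The paper works with the Poisson kernel. It first solves the full Dirichlet problem, with new data $g$ and zero old data. On the $i$th sphere it writes the solution as $g_i$ integrated against $\partial_{\mathbf n_z}G_{D,N}(x,z)$. It bounds that normal derivative by $C\rho_i^{-1}(|x-y_i|^{2-n}+C_{\rm old})$, using the Dirichlet kernel bound and rescaled boundary gradient estimates on $B(y_i,3\rho_i/2)\setminus\overline{B(y_i,\rho_i)}$. The old interface condition is then restored in a separate step, via $(D_N^\Sigma-T)^{-1}$ and the old-boundary Poisson operator, at cost $O(C_{\rm old}q_g)$.

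You instead lift $g$ by cut-off exterior harmonic extensions and invert the remainder with $G_N^0$. Its kernel bound \eqref{old:green-kernel} already contains the old condition on $\Sigma$. This buys three things:
\begin{itemize}
\item it removes the separate interface correction;
\item it removes the boundary gradient estimates for the Green function;
\item it reduces everything to Newtonian potentials of sources localized in disjoint balls, so the holes interact only through the already-proved kernel bound.
\end{itemize}

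The cost is the term $\zeta_iV_0\Phi_i$. With cutoffs at scale $r_i$, getting a dimensional constant in front of the envelope requires $C_{\rm old}r_{\max}^2\lesssim1$. That is an old-dependent smallness of the original radii, which the paper's route does not use; it only needs $\rho_i$ small, and this follows from small $c_N$. The condition is harmless in the application, but you can avoid it entirely by cutting off at scale $\rho_i$. For example, take $\zeta_i=1$ on $B(y_i,2\rho_i)$ with support in $B(y_i,4\rho_i)$, which is allowed since $4\rho_i<r_i$. Then:
\begin{itemize}
\item the annulus term is still $b_i\rho_i^{n-2}$ times the potential of a unit-mass bump, now bounded by $C_n(\rho_i^2+|x-y_i|^2)^{-a}$;
\item the $V_0$ term becomes $O(\|V_0\|_\infty\rho_i^2)$ times the envelope.
\end{itemize}

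In either version the perturbed local extension is unnecessary. The bound $\mathcal I_2(|\cdot-y|^{2-n}\mathbf 1_{B(y,r)})(x)\le C_nr^2|x-y|^{2-n}$ holds in every dimension, with no logarithmic loss; for $n=4$ the term $\log(r/|x-y|)$ is dominated by $r^2|x-y|^{-2}$.

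Finally, \eqref{old:green-kernel} is stated only on $U\cap D_N$. For the points of $D_N$ outside $U$, cite the separated-source collar estimate explicitly.
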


\begin{proof}
First solve with new data $g$ and old data zero using the selfadjoint
compact full Dirichlet inverse. For $x\notin B(y_i,2\rho_i)$, its
Green kernel, as a function of the source variable $z$, has zero trace
and solves the homogeneous equation on
$B(y_i,3\rho_i/2)\setminus\overline{B(y_i,\rho_i)}$.
Rescaled boundary gradient estimates and the Dirichlet kernel bound give
$$
 |\partial_{\mathbf n_z}G_{D,N}(x,z)|
 \leq C\rho_i^{-1}(|x-y_i|^{2-n}+C_{\rm old}),
 \qquad z\in\partial B(y_i,\rho_i).
$$
Integration over the sphere contributes $b_i\rho_i^{n-2}$.
On doubled new collars, the prescribed trace and the outer-boundary
bound give the same envelope by scaled Dirichlet estimates.
The annuli have fixed shape, avoid the other holes, and have uniformly
small scaled old potential; their constants are independent of the
number of holes and the radius ratios.

At $\Sigma$, separated-source estimates bound the normal trace by
$C_{\rm old}q_g$ in the needed fixed norms. Correct it using
$(D_N^{\Sigma}-T)^{-1}$ and the old-boundary Poisson operator.
The correction is $O(C_{\rm old}q_g)$ on the new region and fixed old
collars, and has zero new trace. Boundary and interior Schauder
estimates on the stated patches give the scaled local bounds, since
the envelope varies by only a fixed factor there. This also proves the
fixed old-collar estimate.
\end{proof}

For later use, let $F$ be a fixed $C^{0,\theta}$ source supported a
positive distance from the new-center region. Let $G_0$ denote the
no-hole source inverse with homogeneous old interface condition and
radial coefficients in $\mathcal W'$, and put $w_0=G_0F$. Then the exact identity
\begin{equation}\label{old:regular-source}
 G_N^0F=w_0-P_N(w_0|_{\Gamma_N})
\end{equation}
holds. The old-collar correction is bounded by
$C_{\rm old}c_N\|F\|_{C^{0,\theta}}$.
The traces of $w_0$ have uniformly bounded scaled $C^{2,\theta}$ norms,
so this identity also gives uniform bounds for regular sources on fixed compact subsets.

\subsection{The normalized Dirichlet-to-Neumann map}

Suppose the positive frozen amplitudes $A_i,B_i$ satisfy
\begin{equation}\label{old:rows}
 B_i\rho_i^{2a}\leq C\nu^{2a}A_i,\qquad
 \sum_{j\ne i}\frac{A_j}{|y_i-y_j|^{2a}}\leq CB_i,
 \qquad B_i\geq m_{\rm old}>0.
\end{equation}
The constants $C$ in the first two inequalities are fixed universal
constants. Put $Q=\sum_iA_i$ and use the product norms
$$
 \|g\|_{\mathcal B}=\max_i B_i^{-1}
       \|g_i\|_{C^{2,\theta}(\Sph^{n-1})},\qquad
 \|f\|_{\mathcal B_1}=\max_i B_i^{-1}
       \|f_i\|_{C^{1,\theta}(\Sph^{n-1})}.
$$
Thus $q_g\leq C\nu^{2a}Q\|g\|_{\mathcal B}$.
For the spatial radial coordinate $R_i(x)=|x-y_i|$, the derivative
on a new sphere in the following proposition is $+\partial_{R_i}$,
away from $y_i$. It is the negative of the perforated core's domain
outward normal.

For the nonlinear Poisson estimates, set
\begin{equation}\label{eq:weights}
\begin{gathered}
 h_i(x)=\frac{A_i}{(\rho_i^2+|x-y_i|^2)^a},\qquad
 h=\sum_i h_i,\qquad H_i=h-h_i,\\
 \Psi_i=U_0+H_i,\qquad b=Q^{1/n},\qquad \Phi=h+bU_0.
\end{gathered}
\end{equation}
Suppose, as holds for the frozen packed backgrounds, that
$\Psi_i\geq cB_i$ on its own sphere, with universal $c>0$.
The bubble identity $h_i=A_i(2\rho_i)^{-a}b_{\rho_i,y_i}$
gives smoothness and superharmonicity, as in
Lemma~\ref{pot:positive}.
The harmonic monopole $B_i\rho_i^{2a}|x-y_i|^{-2a}$ is bounded by
$C\Psi_i$ on $\Gamma_i$, and by $C_{\rm old}\nu^{2a}Q$ on $\Sigma$.
After decreasing $Q$, comparison on $D$ minus only the $i$th hole gives
its domination by $C\Psi_i$ everywhere; the other tails are smooth
across their centers. By \eqref{old:rows},
$$
 \sum_j\frac{B_j\rho_j^{2a}}{(\rho_j^2+|x-y_j|^2)^a}
       \leq C\nu^{2a}h(x),\qquad
 \sum_{j\ne i}\frac{B_j\rho_j^{2a}}{(\rho_j^2+|x-y_j|^2)^a}
       \leq C\nu^{2a}H_i(x).
$$
Lemma~\ref{old:poisson} therefore gives
\begin{equation}\label{old:weighted-poisson}
\begin{aligned}
 \sup_{D_N}\frac{|P_Ng|}{\Phi}
   &\leq C\nu^{2a}\|g\|_{\mathcal B},\\
 \max_i\sup_{D_N}\frac{|P_Ng|}{\Psi_i}
   &\leq C\|g\|_{\mathcal B},\\
 \|P_Ng\|_{C^{2,\theta}(\text{old collars})}
   &\leq C_{\rm old}\nu^{2a}Q\|g\|_{\mathcal B}.
\end{aligned}
\end{equation}
The bounded term is absorbed by $bU_0$ for $\Phi$ and by $U_0$ for
$\Psi_i$, since $Q/b=Q^{1-1/n}\to0$. Fix $\nu$ before decreasing $Q$.
Weight comparability, bounded relative jets, and the scaled local
estimates of Lemma~\ref{old:poisson} give the same local bounds.
In the weighted norms \eqref{eq:space-X},
\begin{equation}\label{old:poisson-X}
 \|P_Ng\|_{X_\Phi}\leq C\nu^{2a}\|g\|_{\mathcal B},\qquad
 \max_i\|P_Ng\|_{X_{\Psi_i}}\leq C\|g\|_{\mathcal B}.
\end{equation}
For fixed $A_0>0$, the collar terms follow by dividing the last estimate
in \eqref{old:weighted-poisson} by $A_0b$ or $A_0$ and further decreasing
$Q$. These choices impose no additional restriction on $\nu$.

\begin{proposition}\label{old:product-dtn}
Let $\mathcal D$ act on spherical degree $\ell$ as multiplication by
$-(\ell+2a)$. For sufficiently small universal $\nu$,
\begin{equation}\label{old:dtn-comparison}
 g\longmapsto(\rho_i\partial_{R_i}P_Ng|_{\Gamma_i})_i
  =\operatorname{diag}(\mathcal D)+O(\nu^{2a})+\mathcal R_{\rm old}
   :\mathcal B\longrightarrow\mathcal B_1.
\end{equation}
For each fixed old problem and each $\varepsilon>0$, the operator
$\mathcal R_{\rm old}$ has norm at most $\varepsilon$ if the largest
original radius and $Q$ are sufficiently small. Those choices may
depend on the old problem, $\nu$, and $\varepsilon$.
\end{proposition}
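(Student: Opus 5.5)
We will split $P_Ng$ into a model part built from individual single-hole solutions and a correction controlled by Lemma~\ref{old:poisson} and the weighted estimates \eqref{old:weighted-poisson}. For each $i$, let $g_i$ be the datum on $\Gamma_i$. Let $E_ig_i$ be its exterior harmonic extension to $\R^n\setminus B(y_i,\rho_i)$ that decays at infinity. On degree $\ell$ this extension is $(\rho_i/R_i)^{\ell+2a}$ times the spherical harmonic. Hence $\rho_i\partial_{R_i}E_ig_i|_{\Gamma_i}=\mathcal D g_i$ exactly, and this term gives the diagonal. We cut $E_ig_i$ off by a smooth cutoff $\chi$ equal to one on a fixed neighbourhood of $K$ and vanishing near $\Sigma$, and write
$$
 P_Ng=\sum_i\chi E_ig_i+w,
$$
where $w$ solves \eqref{old:linear-boundary-problem} with source
$$
 f=-\sum_i\bigl(V_0\chi E_ig_i+[\Delta,\chi]E_ig_i\bigr),
$$
with new traces $-\sum_{j\ne i}\chi E_jg_j|_{\Gamma_i}$ and zero old data, since $\chi=0$ near $\Sigma$. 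The task is to show that the normalized normal derivative of $w$ on each $\Gamma_i$ is $O(\nu^{2a})+\mathcal R_{\rm old}$ in the $\mathcal B\to\mathcal B_1$ norm.

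The traces of the other holes give the $O(\nu^{2a})$ term. On $\Gamma_i$ and its doubled collar, with scaled derivatives, we have
$$
 |E_jg_j|\leq C\|g_j\|\rho_j^{2a}|x-y_j|^{-2a}\leq C\|g\|_{\mathcal B}B_j\rho_j^{2a}|y_i-y_j|^{-2a}.
$$
By the first two rows of \eqref{old:rows}, this is at most $C\nu^{2a}\|g\|_{\mathcal B}\,A_j|y_i-y_j|^{-2a}$. Summing over $j\ne i$ and using the second row bounds the trace on $\Gamma_i$ by $C\nu^{2a}B_i\|g\|_{\mathcal B}$. That is, the new-trace datum $g'$ has $\|g'\|_{\mathcal B}\leq C\nu^{2a}\|g\|_{\mathcal B}$. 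Its Poisson solution $P_Ng'$ is then handled by the second weighted bound in \eqref{old:poisson-X}: $|P_Ng'|\leq C\nu^{2a}\|g\|_{\mathcal B}\Psi_i$ near $\Gamma_i$. Since $\Psi_i\leq CB_i$ on the collar of $\Gamma_i$ (by \eqref{pot:backgroundjets}), the scaled boundary Schauder estimates on the fixed-shape collar give
$$
 \|\rho_i\partial_{R_i}P_Ng'\|_{C^{1,\theta}}\leq C\nu^{2a}B_i\|g\|_{\mathcal B},
$$
which is the $O(\nu^{2a})$ term with a universal constant. Here $\nu$ must be small only to keep the monopole comparison and the collar geometry uniform.

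The source part goes into $\mathcal R_{\rm old}$. The commutator term is supported a fixed distance from the holes. Its size is at most $C_{\rm old}\sum_jb_j\rho_j^{2a}\leq C_{\rm old}\nu^{2a}Q\|g\|_{\mathcal B}$. Through \eqref{old:regular-source} it contributes at most $C_{\rm old}Q$ after normalization by $B_i\geq m_{\rm old}$, so it is small once $Q$ is small. The potential term $V_0\chi E_jg_j$ is estimated with the kernel bound \eqref{old:green-kernel}. Near $\Gamma_i$, the Newtonian part of $\int|x-z|^{2-n}|V_0E_jg_j|$ is at most $C\|V_0\|_\infty\rho_j^2$ times the envelope for $j=i$, and a similar gain holds for $j\ne i$. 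After scaling to the collar, this gives relative size $C_{\rm old}\max_j\rho_j^2$. The bounded part of the kernel contributes at most $C_{\rm old}\nu^{2a}Q$. Both tend to zero as $\max r_i$ and $Q$ go to zero, and we put them into $\mathcal R_{\rm old}$. The solution of this source problem with zero new trace is then converted to a normal derivative by the same scaled collar estimates.

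The main obstacle is the diagonal self-interaction. One must check that the boundary-value correction on $\Gamma_i$ coming from $E_ig_i$ itself, through the potential $V_0$ inside the $i$th collar, does not create an $O(1)$ error in the degree-zero and degree-one modes. Those modes are where $\mathcal D$ is smallest ($-2a$ and $-2a-1$), so an $O(1)$ error there would prevent inverting the diagonal operator. The first attempt will use the scaled potential $\rho_i^2V_0(y_i+\rho_i\cdot)=O(\rho_i^2)$ on the fixed-shape annulus, as at the end of the proof of Proposition~\ref{old:perforated-inverse}. This should show that the self term is at most $C_{\rm old}\rho_i^2$ uniformly over all modes, which can be absorbed into $\mathcal R_{\rm old}$.
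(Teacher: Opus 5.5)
Your route differs from the paper's. The paper first builds the simultaneous harmonic lift $P_Lg$ on $\R^n$ minus all holes, using a Neumann series in the off-diagonal trace and flux matrices $K,M$. Their row sums are $O(\nu^{2a})$ by \eqref{old:rows}. This gives the normal operator $(\mathcal D_0+M)(I+K)^{-1}=\mathcal D_0+O(\nu^{2a})$ at the purely harmonic level, using only \eqref{old:rows} and the maximum principle. All effects of $V_0$ and the old ends go into the zero-trace correction $z=P_Ng-P_Lg$.

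You instead use single-hole extensions and remove the old interface forcing with $\chi$, paying with commutator terms supported away from the holes. You then feed the off-diagonal trace mismatch $g'$ back into the full operator and bound $P_Ng'$ by \eqref{old:poisson-X}. This avoids inverting $I+K$. The price is that your universal term relies on the scaled collar estimates of Lemma~\ref{old:poisson}, after the old-dependent reduction of $Q$ in \eqref{old:weighted-poisson}. It also relies on $\Psi_i\le CB_i$ on the $i$th collar. That bound holds for the frozen backgrounds but is not among the hypotheses \eqref{old:rows}; using the envelope of Lemma~\ref{old:poisson} directly avoids it. Your bookkeeping also omits the flux term $\sum_{j\ne i}\rho_i\partial_{R_i}E_jg_j|_{\Gamma_i}$ (the paper's $M$), though your scaled-derivative bound covers it.

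The step that fails as written is the potential source for $j\ne i$. For $x\in\Gamma_i$, the singular part $\int|x-z|^{2-n}|z-y_j|^{2-n}\,\dd z$ gains only a factor comparable to $|y_i-y_j|^2$ relative to $|y_i-y_j|^{2-n}$ when $n\ge5$. So there is no relative bound $C_{\rm old}\max_j\rho_j^2$. Even your self term gains only $\rho_i$ when $n=3$ and $\rho_i^2\log(1/\rho_i)$ when $n=4$, which is harmless.

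Summing the cross terms crudely gives only $C_{\rm old}\nu^{2a}B_i\|g\|_{\mathcal B}$, with a constant depending on $\|V_0\|_\infty$ and $\diam D$. That fits neither into the universal $O(\nu^{2a})$ nor into $\mathcal R_{\rm old}$, since it does not shrink with the radii and $Q$. The repair is the paper's localization:
\begin{itemize}
\item split the kernel integral at $|z-x|=L$;
\item bound the near part by the 3G estimate $\mathcal I_2(\mathbf 1_{B(x,L)}E_g)(x)\le CL^2E_g(x)$;
\item bound the far part by $L^{2-n}\int_DE_g\le C_{\rm old}L^{2-n}q_g$;
\item choose $L$ small in terms of the old data, and only then $Q$ small, using $q_g\le C\nu^{2a}Q\|g\|_{\mathcal B}$ and $B_i\ge m_{\rm old}$.
\end{itemize}

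Your ``main obstacle'' is therefore misplaced. The self term is already handled by the kernel bound. The scaled potential $\rho_i^2V_0(y_i+\rho_i\cdot)$ only enters the final zero-trace Schauder conversion to a normal derivative. The real uniformity issue is the many-hole cross interaction through $V_0$.
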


\begin{proof}
For an isolated sphere, the exterior harmonic Poisson extension
$P_i^{\rm harm}$, vanishing at infinity, has degree-$\ell$ radial factor
$(\rho_i/R_i)^{\ell+2a}$. Let $K$ be its off-diagonal trace matrix and
$M$ its off-diagonal normalized normal matrix. The trace on the $j$th
sphere is bounded in scaled $C^{2,\theta}$ by
$$
 C B_i\rho_i^{2a}|y_j-y_i|^{-2a}
       \|g_i/B_i\|_{C^{2,\theta}},
$$
since each scaled derivative contributes a factor
$\rho_j/|y_j-y_i|\leq\nu$. Summing by rows using \eqref{old:rows} gives
$$
 \|K\|_{\mathcal B\to\mathcal B}
 +\|M\|_{\mathcal B\to\mathcal B_1}\leq C\nu^{2a}.
$$
For small universal $\nu$, a Neumann series constructs the simultaneous
harmonic lift $P_Lg$. With $\mathcal D_0=\operatorname{diag}(\mathcal D)$,
its normal operator is
$$
 (\mathcal D_0+M)(I+K)^{-1}
 =\mathcal D_0+(M-\mathcal D_0K)(I+K)^{-1}
 =\mathcal D_0+O(\nu^{2a})
 \quad\text{in }\mathcal L(\mathcal B,\mathcal B_1).
$$
This includes reflected contributions to the diagonal.

Put $b_j=\|g_j\|_{C^{2,\theta}}$ and
$$
 E_g(x)=\sum_j\frac{b_j\rho_j^{2a}}
                    {(\rho_j^2+|x-y_j|^2)^a}.
$$
The harmonic monopoles dominate $|g|$ on every new sphere, so the
maximum principle for the harmonic lift gives $|P_Lg|\leq CE_g$.
Each summand is a positive Newton potential, and
$$
 \int_D E_g\,\dd x\leq C_{\rm old}q_g.
$$
On every doubled $i$th collar, \eqref{old:rows} gives
$E_g\leq CB_i\|g\|_{\mathcal B}$; harmonic boundary estimates give the
same bound for the scaled jets of $P_Lg$.

The difference $z=P_Ng-P_Lg$ has zero new trace and satisfies
$L_0z=-V_0P_Lg$, with old interface forcing
$T(P_Lg|_\Sigma)-\partial_{\mathbf n}P_Lg$.
The forcing and the source away from a fixed new neighborhood have
fixed norms $O(C_{\rm old}q_g)$, hence responses of that size by the old
interface estimate and \eqref{old:regular-source}.
For the localized source, use \eqref{old:green-kernel} only when both
source and observation lie in that neighborhood. The 3G estimate
\eqref{pot:3Goperator} gives
$$
 \mathcal I_2(\mathbf1_{B(x,L)}E_g)(x)\leq CL^2E_g(x).
$$
Splitting the localized kernel integral at $B(x,L)$, and using the
mass bound for the far part and bounded kernel remainder, yields
$$
 |z(x)|\leq C_{\rm old}L^2E_g(x)+C_{\rm old}(L)q_g
$$
on all doubled new collars. With $\nu$ fixed, first choose $L$ small
and then $Q$ small, using
$q_g\leq C\nu^{2a}Q\|g\|_{\mathcal B}$ and $B_i\geq m_{\rm old}$.
This makes $|z|/B_i$ arbitrarily small simultaneously on every collar.
Finally decrease the largest original radius so that
$\rho_i^2V_0P_Lg/B_i$ is small in scaled $C^{0,\theta}$.
Zero-trace boundary Schauder estimates give
$$
 \|\rho_i\partial_{R_i}z/B_i\|_{C^{1,\theta}}
       \leq\varepsilon\|g\|_{\mathcal B},
$$
proving \eqref{old:dtn-comparison}. All constants in the harmonic
comparison and the rescaled collar estimates are independent of the
number of holes and their radius ratios; the old-dependent choices
are made only after $\nu$ is fixed.
\end{proof}

The radius and coefficient conditions are compatible with the scales of
Theorem~\ref{thm:insertion}. In that notation,
$$
 c_N=\sum_i(\nu r_i)^{2a}
 \leq C\nu^{2a}\eta^{-a}r_{\max}^{a}Q.
$$
Consequently all old inverse, regular-source, and interface errors
can be made small after the universal gluing scale has been fixed.

\section{The nonlinear exterior problem and matching}
\label{sec:nonlinear-matching}

\subsection{The nonlinear problem on the punctured core}\label{sec:exterior}
We solve the equation on the perforated core using the inverse from
Proposition~\ref{old:inverse}. Newton potentials control the singular
part of its kernel; the bounded remainder is estimated through source
masses. Constants $C$ depend only on the fixed data of
Theorem~\ref{thm:insertion}, and may additionally depend on a fixed
parameter-ball radius $R$ when written $C_R$. Constants $C_0$ may depend
on the fixed $\eta$, the old solution, the core, and the chosen neighborhoods. Throughout
this section, $o_0(1)$ denotes an error that tends to zero as the
maximal radius and total Newton coefficient tend to zero, with the old data and
$\eta$ fixed. These errors are uniform in the number of new punctures
and their radius ratios.

For the gluing scales in \eqref{eq:scales}, write
$$
 \nu=\eta^{2/(n+2)},\qquad
 s=\eta/\nu,\qquad
 \delta=s^2=\nu^n,\qquad
 k=\nu^2+\delta.
$$
Thus $\rho_i=\nu r_i$ and $\lambda_i/\rho_i=s$.

\subsubsection{Frozen local amplitudes and conformally transformed bubbles}
Define
\begin{equation}\label{eq:frozen-background}
 B_i=U_0(y_i)+\sum_{j\ne i}b_{\lambda_j}(y_i-y_j),\qquad
 \mathbf a_i^0=\frac{\nabla(U_0+\sum_{j\ne i}b_{\lambda_j,y_j})(y_i)}{2aB_i}.
\end{equation}
After reducing the maximal radius, Lemma~\ref{pot:background}
gives \eqref{pot:backgroundsize}--\eqref{pot:backgroundconsequences}.
The jet estimate also holds on $B(y_i,cr_i)$ for a fixed small
$c>0$, since disjointness keeps the other centers at comparable
distances and the old relative jets are bounded on the fixed
compact neighborhood.

The parameters are
\begin{equation}\label{eq:parameter-space}
 \mathbf p=((b_i,\mathbf c_i,\psi_i))_i\in
 \cE_N=\prod_i\bigl(\R\times\R^n\times
 \Pi_{\mathrm{hi}}C^{2,\theta}(\Sph^{n-1})\bigr),
\end{equation}
with the maximum of the usual component norms. Work on $\norm{\mathbf p}\leq Rk$.
In the $i$th inner ball, the constructions of Section~\ref{in:section}
are applied in the local coordinate $x-y_i$.
Set $B=B_i e^{b_i}$, $\mathbf a_i=\mathbf a_i^0+\mathbf c_i/\rho_i$,
and use the inner solution $I_i(\mathbf p)$ of Proposition~\ref{in:family}, with
high data $\phi_i=B_i\psi_i$.
In the notation of Proposition~\ref{in:family}, these choices mean
$B_*=B_i$, $\boldsymbol z=\mathbf a_i$, and
$\boldsymbol c=\rho_i\mathbf a_i=\rho_i\mathbf a_i^0+\mathbf c_i$.
The same special conformal transformation gives the bubble
\begin{equation}\label{eq:transformed-bubble}
 b_i^{\mathbf p}=b_{\lambda_i'}(\,\cdot-y_i-\xi_i),\quad
 C_i=1+\lambda_i^2|\mathbf a_i|^2,\quad
 \lambda_i'=\lambda_i/C_i,\quad
 \xi_i=\lambda_i^2\mathbf a_i/C_i.
\end{equation}
Put
\begin{equation}\label{eq:SRV}
 S_{\mathbf p}=U_0+\sum_i b_i^{\mathbf p},\quad
 \mathcal R_{\mathbf p}=S_{\mathbf p}^p-U_0^p-\sum_i(b_i^{\mathbf p})^p,\quad
 V_{\mathbf p}=\kappa p(S_{\mathbf p}^q-U_0^q).
\end{equation}
Set $g_i=I_i-S_{\mathbf p}$ on
$\Gamma_i=\partial B(y_i,\rho_i)$. The transformed bubble associated
with $y_i$ cancels exactly in this difference:
\begin{equation}\label{eq:own-cancellation}
 g_i=\mathcal T_{\mathbf a_i}(U_B-b_{\lambda_i})
       +H_{B_i\psi_i}+W_i
       -\bigl(U_0+\sum_{j\ne i}b_j^{\mathbf p}\bigr)
       \quad\hbox{on }\Gamma_i.
\end{equation}
For $\norm{g}_{\cB}:=\max_i B_i^{-1}
\norm{g_i}_{C^{2,\theta}(\Sph^{n-1})}$ this proves
\begin{equation}\label{eq:g-estimates}
 \norm{g}_{\cB}\leq C_Rk,\qquad
 \norm{D_{\mathbf p}g}_{\cE_N\to\cB}\leq C_R.
\end{equation}
This cancellation is necessary: the two terms involving the own
bubble need not be bounded separately after division by $B_i$.

Outside the cutoff balls, unit parameter variations satisfy
\begin{equation}\label{eq:bubble-tangents}
 |\dot b_i^{\mathbf p}|\leq C_R\delta b_i^{\mathbf p},\quad
 |\dot{\mathcal R}_{\mathbf p}|\leq C_R\delta\mathcal R_{\mathbf p},\quad
 |\dot V_{\mathbf p}|\leq C_R\delta V_{\mathbf p}.
\end{equation}
The same bounds hold with scaled local derivatives of fixed order:
the displacement derivative in \eqref{eq:transformed-bubble} is
$O(\lambda_i^2/\rho_i)$ at observation distance at least $\rho_i$, and
the scale derivative uses $\lambda_i^2|\mathbf a_i|/\rho_i=O_R(\delta)$.
The positive Euler identities below give the remaining bounds.

\subsubsection{Weighted H\"older spaces}
Use the parameter-independent weights in \eqref{eq:weights},
and write $b_\Phi=b$ and $b_{\Psi_i}=1$. The choice $b=Q^{1/n}$
ensures that the height--mass response and bounded kernel
remainders become small after division by $bU_0$.
Choose fixed neighborhoods
$K\Subset U_-\Subset U_0'\Subset U_1\Subset U_2\Subset D$.
All original balls lie in $U_-$; choose $\chi=1$ on $U_0'$ with
$\supp\chi\subset U_1$.

Near $\Gamma_i$ use boundary patches of scale $\rho_i$, in a collar
of fixed scaled width. Elsewhere, away from fixed old collars, use
interior patches of scale
\begin{equation}\label{eq:patch-scale}
 \ell(x)=\min\{r_0,\min_j|x-y_j|\}.
\end{equation}
Patch radii are a small fixed multiple of $\ell$. Choose old-dependent
$0<r_0\leq1$ so enlarged patches stay in their stated regions, and require
$\max_i r_i\leq r_0$, hence $\rho_i\leq r_0$. With $\nu\ll1$, enlarged
boundary patches avoid other cutoff balls. Bounded relative jets of $W$
make its patch infimum comparable with its central value. Define
\begin{equation}\label{eq:space-X}
 \norm{v}_{X_W}=\sup\frac{|v|}{W}
 +\sup_P\frac{\norm{v}_{C^{2,\theta}_{\mathrm{sc}}(P)}}{\inf_P W}
 +\frac{\norm{v}_{C^{2,\theta}(\text{old collars})}}{A_0 b_W}.
\end{equation}
Scaled norms multiply the $j$th derivative by the patch scale to the
$j$th power and the H\"older seminorm by the corresponding power
plus $\theta$. The constant $A_0$ is fixed after the old data.
Let $X_W^0$ mean zero full trace on every $\Gamma_i$, and put
$X^0= X_\Phi^0\cap\bigcap_i X_{\Psi_i}^0$, with the maximum norm.
These are Banach spaces for each finite configuration, with weights
independent of the gluing parameter vector.

Extract the original ball containing an interior point, if any, and
apply packing to the remaining family:
\begin{equation}\label{eq:h-local}
 h\leq Cs^a\ell^{-a},\qquad h\leq CQ\ell^{-2a}.
\end{equation}
The first bound for the own term uses $|x-y_i|\geq\rho_i=\nu r_i$.
For every other term $r_j\leq|x-y_j|$, and shell summation applies.
The same proof applies in a new boundary patch with $\ell=\rho_i$.
In particular,
\begin{equation}\label{eq:V-local}
 \ell^2\norm{V_{\mathbf p}}_{C^{0,\theta}_{\mathrm{sc}}}
 \leq C\delta+o_0(1).
\end{equation}
Here $o_0(1)\to0$ as $\max r_i$ and $Q$ decrease with the old data and
$\eta$ fixed. For $q\leq1$ use
$V_{\mathbf p}\leq Ch^q$. For $q>1$, necessarily $a<2$, split at $h=U_0$.
In the low region use $V_{\mathbf p}\leq C_0h$ and optimize the two bounds in
\eqref{eq:h-local}; in the high region use $Ch^q$. This also proves
the scaled H\"older version by smooth positive ratio identities.

\subsubsection{Estimates for the residual and the potential}
For $p<2$, we use the positive integral formula
\begin{equation}\label{eq:euler-positive}
 D_p(z,w):=(z+w)^p-z^p-w^p
 =p(p-1)zw\int_0^1\int_0^1(tz+sw)^{p-2}\,\dd t\,\dd s.
\end{equation}
Every fixed-order Euler derivative in $z,w$ is bounded in absolute value
by $C D_p(z,w)$:
differentiation produces ratios between zero and one against the
positive integrand. The expression is smooth for $z,w>0$, with an
integrable singularity at $(t,s)=(0,0)$. Telescoping $\mathcal R_{\mathbf p}$ into the positive terms
$D_p(U_0+\sum_{j<i}b_j^{\mathbf p},b_i^{\mathbf p})$ therefore yields
\begin{equation}\label{eq:R-jets}
 \norm{\mathcal R_{\mathbf p}}_{C^{m,\theta}_{\mathrm{sc}}(P)}
 \leq C\mathcal R_{\mathbf p}(x_P),\quad
 \norm{\dot{\mathcal R}_{\mathbf p}}_{C^{m,\theta}_{\mathrm{sc}}(P)}
 \leq C\delta\mathcal R_{\mathbf p}(x_P).
\end{equation}
For every $i$, splitting at $H_i=U_0$ and
$h_i=U_0/2$ gives
\begin{equation}\label{eq:refined-source}
 \mathcal R_{\mathbf p}+V_{\mathbf p}\Psi_i
 \leq C\{h_i^q\Psi_i+H_i^p+
             U_0^qh\,\mathbf1_{\{h\leq C U_0\}}\}.
\end{equation}
Use comparability of the transformed bubbles with $h_i$. If
$H_i\geq U_0$, the terms containing $H_i^qU_0$ are bounded by $H_i^p$;
if $H_i<U_0$ and $h_i\geq U_0/2$, they are bounded by $h_i^qU_0$;
in the remaining low region they are bounded by $C U_0^qh$. The splitting
applies to both $q<1$ and $q\geq1$.

The first two terms in \eqref{eq:refined-source} have Newton potentials
bounded by $C\delta\Psi_i$, by the single-term 3G inequality and the
packed potential estimates. The last term has bounded height $C_0$ and
mass $C_0Q$. For $f\geq0$ with $\|f\|_\infty\leq L$ and
$\|f\|_1\leq m$, the standard near--far potential estimate
\cite{Hedberg1972} gives
\begin{equation}\label{eq:height-mass}
 \mathcal I_2 f\leq C L^{1-2/n}m^{2/n}.
\end{equation}
Set $\tau=Q^{2/n}=b^2$. Since $\min_D U_0>0$, its contribution
is at most $C_0\tau U_0=C_0b^2U_0$.

For the source masses, on $h\geq U_0$ each of $\mathcal R_{\mathbf p},V_{\mathbf p}U_0,V_{\mathbf p}h$ is at most $Ch^p$;
on $h<U_0$ each is at most $C_0h$. Equation~\eqref{pot:mass} gives $\int h^p\leq C\delta Q$.
Also $\int_Dh\leq C_0Q$. Consequently,
\begin{equation}\label{eq:source-masses}
 \int_D(\mathcal R_{\mathbf p}+V_{\mathbf p}U_0+V_{\mathbf p}h)\,\dd x\leq C_0Q.
\end{equation}
Use these masses rather than summing the larger majorants $h_i^qU_0$.

\begin{lemma}[Weighted inverse estimates]\label{ext:structural}
Let $G_N[F,\xi]$ denote the inverse with zero trace on the new spheres,
source $F$, and old interface forcing $\xi$ as in
\eqref{old:linear-boundary-problem}; write $G_NF=G_N[F,0]=G_N^0F$.
Choose $r_0$ and $A_0$ after fixing the old data and $\eta$.
After reducing the radius and coefficient bounds, for $W=\Phi$ and each
$\Psi_i$,
\begin{align}\label{eq:structural}
 \norm{G_N(V_{\mathbf p} f)}_{X_W}&\leq(C\delta+o_0(1))\norm{f}_{X_W},
 \\
 \norm{G_N\mathcal R_{\mathbf p}}_{X_W}&\leq C\delta.
 \nonumber
\end{align}
For every fixed smooth multiplier $m$,
$\|G_N(mf)\|_{X_W}\leq C_{0,m}\|f\|_{X_W}$.
For $\|\dot{\mathbf p}\|_{\mathcal E_N}\leq1$, the parameter derivatives satisfy
\begin{equation}\label{eq:structural-derivatives}
 \|G_N(D_{\mathbf p}V_{\mathbf p}[\dot{\mathbf p}]f)\|_{X_W}
       \leq C_R\delta^2\|f\|_{X_W},\qquad
 \|G_ND_{\mathbf p}\mathcal R_{\mathbf p}[\dot{\mathbf p}]\|_{X_W}
       \leq C_R\delta^2.
\end{equation}
The interface forcing obeys
\begin{equation}\label{eq:interface-weighted-bound}
 \|G_N[0,\xi]\|_{X_W}
       \leq C_0 b_W^{-1}\|\xi\|_{C^{1,\theta}(\Sigma)},
 \qquad b_\Phi=b,\quad b_{\Psi_i}=1.
\end{equation}
Here the old-dependent constants are uniform in the new configuration.
\end{lemma}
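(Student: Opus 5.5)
Each bound in Lemma~\ref{ext:structural} reduces to a pointwise estimate on $D_N$ together with scaled local Schauder estimates. For the pointwise part I split the kernel of $G_N=G_N^0$ from Proposition~\ref{old:perforated-inverse} into its Newtonian part $C_n|x-z|^{2-n}$ and its bounded remainder $C_{\rm old}$ on $U=U_1$. Sources supported outside $U_0'$, or old-collar pieces, are handled by the separated-source estimate and \eqref{old:regular-source}. For a nonnegative source $F$ supported in $U$ this gives
\begin{equation*}
 |G_NF(x)|\le C_n\mathcal I_2F(x)+C_{\rm old}\|F\|_{L^1},
\end{equation*}
and the mass term is absorbed by the weight. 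For $W=\Phi$ we have $\Phi\ge bU_0\ge b\min_DU_0$, and every mass below is $O(C_0Q)$ with $Q/b=Q^{1-1/n}\to0$, so the mass term becomes $o_0(1)$ relative to $\Phi$. For $W=\Psi_i$ it is even smaller relative to $U_0$. The weighted $C^{2,\theta}$ parts then come from scaled interior and boundary Schauder estimates on the patches of \eqref{eq:patch-scale}, using \eqref{eq:V-local} and the comparability of $W$ on each patch. The old-collar part comes from the separated-source bound $C_{\rm old}\|F\|_{L^1}$, divided by $A_0b_W$, with $A_0$ fixed and then $Q$ decreased.

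For the potential term $G_N(V_{\mathbf p}f)$ I take $|f|\le\|f\|_{X_W}W$. This reduces the estimate to $\mathcal I_2(V_{\mathbf p}W)\le(C\delta+o_0(1))W$ for the positive potentials $W$. I split $V_{\mathbf p}\le C(h^q+U_0^{q-1}h\mathbf 1_{\{h\le CU_0\}})$, as in the proof of \eqref{eq:refined-source}. The part $h^q$ I would treat through Proposition~\ref{pot:relative} applied to each $h_i$, but summing over $i$ loses uniformity. So instead I would use the 3G inequality \eqref{pot:3Goperator} with $V=h^q$ and bound $\sup\mathcal I_2(h^q)\le C\delta$. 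That bound should follow from the packed pointwise estimate of the type \eqref{pot:pointwise} raised to the power $q/p$, combined with a near--far split around $x$.

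I expect this uniform Kato bound for $h^q$ to be the main obstacle. If it fails for $q>1$, I would fall back on the splitting at $H_i=U_0$ and $h_i=U_0/2$ used in \eqref{eq:refined-source}. The own term is then controlled by Proposition~\ref{pot:relative}, and the remaining term $H_i^{q}W$ by $H_i^p$-type bounds together with Proposition~\ref{pot:endpoint}, since $\mathcal I_2(h^p)\le C\delta h\le C\delta W$. The low region, where $V_{\mathbf p}\lesssim C_0h$, has bounded height and mass $C_0Q$, so \eqref{eq:height-mass} gives $C_0b^2W$, which is $o_0(1)$ relative to $W$.

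For the residual $\mathcal R_{\mathbf p}$ I use \eqref{eq:refined-source} with $f=1$, or the analogue with $\Phi$: the terms $h_i^q\Psi_i$ and $H_i^p$ have potentials $\le C\delta\Psi_i$, the low term gives $C_0b^2U_0$, and the total mass is bounded by \eqref{eq:source-masses}. For the derivative bounds \eqref{eq:structural-derivatives} I use \eqref{eq:bubble-tangents} and \eqref{eq:R-jets}. These give $|\dot V_{\mathbf p}|\le C_R\delta V_{\mathbf p}$ and $|\dot{\mathcal R}_{\mathbf p}|\le C_R\delta\mathcal R_{\mathbf p}$, together with the jet versions, and the preceding estimates then supply a second factor $\delta$, after shrinking $o_0(1)$ below $\delta$. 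The fixed smooth multiplier $m$ has bounded support in $D$. Since $\mathcal I_2(\mathbf 1_D W)\le C_0 W$ by 3G, this yields $C_{0,m}$. Finally, \eqref{eq:interface-weighted-bound} is the old-interface estimate of Proposition~\ref{old:perforated-inverse}. The response has $C^0$ size $C_0\|\xi\|$ on $D_N$ and $C^{2,\theta}$ bounds on the collars. Dividing by $\inf W\ge b_W\min U_0$ gives the factor $b_W^{-1}$, and the zero new trace is built into $G_N$.
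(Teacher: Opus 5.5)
Your primary step for the multiplier fails. The rest of the architecture matches the paper: the kernel split into a Newtonian part plus a bounded remainder, mass terms absorbed via $Q/b=Q^{1-1/n}$, patchwise Schauder estimates, 3G with $\mathbf 1_D$ for smooth multipliers, and the extra factor $\delta$ from \eqref{eq:bubble-tangents}.

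The uniform Kato bound $\sup_x\mathcal I_2(h^q)(x)\le C\delta$ is false for packed families. This holds in every dimension, not only when $q>1$. Take $x_0\in K$, $t_k=2^{-Mk}$ for $k=1,\dots,N$, and balls $B(y_k,ct_k)$ with $|y_k-x_0|=t_k$. For fixed $M$ and small $c$ these balls are disjoint and satisfy \eqref{eq:packing} with a constant depending only on $\beta$. Rescaling makes $\max r_i$ and $Q$ as small as you like.

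On the annulus $\{t_k/2<|z-x_0|<2t_k\}$ one has $|z-y_k|\le3t_k$. Hence $h\ge h_k\ge c\,\eta^at_k^{-a}$, and $h^q\ge c\,\eta^2t_k^{-2}$ because $aq=2$. Each annulus therefore contributes $\gtrsim\eta^2$ to $\mathcal I_2(\mathbf 1_{D_N}h^q)(x_0)$, so $\mathcal I_2(h^q)(x_0)\gtrsim N\eta^2=N\nu^2\delta$. This is unbounded in the number of scales.

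Raising a pointwise packing estimate to the power $q/p$ cannot repair this. Near an accumulation point $h^q$ behaves like $\eta^2|z-x_0|^{-2}$ in every dyadic shell, which is the borderline case for $\mathcal I_2$. The single-bubble bound \eqref{pot:singleKato} does not sum over $i$. So the 3G inequality with $V=h^q$ gives no constant uniform in the configuration.

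What works, and what the paper does, is your fallback used unconditionally: never separate $V_{\mathbf p}$ from $W$. The extra factor of $W$ turns $h^q$-type terms into $h^p$-type terms, for which Proposition~\ref{pot:endpoint} uses the packing to sum.

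For $\Psi_i$, the splitting at $H_i=U_0$ and $h_i=U_0/2$ gives \eqref{eq:refined-source}. Its first term involves a single $h_i$ and is handled by Proposition~\ref{pot:relative}. For the $H_i^p$ term, your justification $\mathcal I_2(h^p)\le C\delta h\le C\delta W$ is wrong when $W=\Psi_i$. On $\Gamma_i$ one has $h_i\ge c\nu^{-2a}\Psi_i$ by \eqref{pot:backgroundconsequences} and the jet bounds, so $h$ is not dominated by $\Psi_i$. Instead, apply Proposition~\ref{pot:endpoint} to the subfamily $\{j\ne i\}$, which is still disjoint and packed. This gives $\mathcal I_2(H_i^p)\le C\delta H_i\le C\delta\Psi_i$.

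For $\Phi$, split $V_{\mathbf p}\Phi=V_{\mathbf p}h+bV_{\mathbf p}U_0$ and use $V_{\mathbf p}h\le Ch^p$ on $\{h\ge U_0\}$. There your inequality $\mathcal I_2(h^p)\le C\delta h\le C\delta\Phi$ is correct. The low region is handled by \eqref{eq:height-mass}, as you say.

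The remaining pieces are fine. This includes your interface bound taken directly from Proposition~\ref{old:perforated-inverse} with $W\ge b_WU_0$, in place of the paper's route through \eqref{old:regular-source} and \eqref{old:poisson-X}.
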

\begin{proof}
First use the localized kernel of Proposition~\ref{old:inverse} for source
and observation in $U_2$. For a $\Psi_i$ majorant apply
\eqref{eq:refined-source}. The singular part is at most
$C\delta\Psi_i+C_0b^2U_0$. The smooth part of the kernel is bounded
by \eqref{eq:source-masses}. For $\Phi$, split the multiplier source
into its $h$ and $bU_0$ parts. For the residual, the high--low
splitting used in \eqref{eq:source-masses} gives on $D_N$
$$
 \mathcal R_{\mathbf p}\leq Ch^p+C_0h\mathbf1_{\{h<U_0\}},\qquad
 \mathcal I_2(\mathbf1_{D_N}\mathcal R_{\mathbf p})
       \leq C\delta h+C_0b^2U_0.
$$
The residual source in the Newton potential is extended by zero
outside $D_N$.
The low-region term has bounded height and mass $O(C_0Q)$;
\eqref{eq:height-mass} and $\min_DU_0>0$ give its stated response.
Dividing the residual response by $\Phi=h+bU_0$ gives
$C\delta+C_0b$. Dividing the height--mass response
$C_0b^2U_0$ by the positive background term $bU_0$ gives $C_0b$; the
$bU_0$ multiplier part contributes at most $C\delta+C_0b^2$.
The smooth source mass contributes $C_0Q/b$. Thus both the multiplier
and the residual have coarse value bound at most
$$
 C\delta+C_0\left(b+b^2+\frac{Q}{b}\right).
$$
The old-dependent terms vanish with $Q$, since $Q/b=Q^{1-1/n}$.

Apply the kernel only to $\chi F$. Outside $U_2$, separation bounds the
response by $C_0\int|F|$.
Put $F_V=V_{\mathbf p}f$ and $F_R=\mathcal R_{\mathbf p}$. On the fixed support of
$1-\chi$, all bubbles and their ordinary jets are $O(C_0Q)$.
Require $b\leq1$ and $Q\leq b$. There $W=O(C_0b_W)$,
and hence
$$
 \norm{(1-\chi)F_V}_{C^{0,\theta}}
       \leq C_0Qb_W\norm{f}_{X_W},\qquad
 \norm{(1-\chi)F_R}_{C^{0,\theta}}\leq C_0Q.
$$
These products extend by zero across all the new balls.
Apply \eqref{old:regular-source} to these two regular sources.
The old inverse bounds their no-hole responses by
$C_0Qb_W\norm{f}_{X_W}$ and $C_0Q$, respectively, in fixed
$C^{2,\theta}$ norms on the core and old collars.
Since $B_i\geq m_{\rm old}>0$, the new-sphere traces have the same
bounds in $\mathcal B$. Applying \eqref{old:poisson-X} with the
particular weight $W=\Phi$ or $\Psi_i$, and dividing the old-collar
estimates by $A_0b_W$, gives
\begin{align*}
 \norm{G_N((1-\chi)V_{\mathbf p}f)}_{X_W}
       &\leq C_0Q\norm{f}_{X_W},\\
 \norm{G_N((1-\chi)\mathcal R_{\mathbf p})}_{X_W}
       &\leq C_0Q/b_W.
\end{align*}
For prescribed $\varepsilon_0>0$, reduce $Q$ so that
\begin{equation}\label{eq:regular-budget}
 C_0\left(b+b^2+\frac{Q}{b}\right)
       \leq\varepsilon_0\delta.
\end{equation}
Since $b=Q^{1/n}\to0$ and $Q/b=Q^{1-1/n}\to0$, this closes
\eqref{eq:structural}, including the old-collar terms in the weighted norms. In particular,
the coefficient of $\delta$ in these estimates is universal; the
old-dependent constants are absorbed by reducing $Q$ after $\eta$
has been fixed.

For the scaled local norm apply boundary or interior Schauder
estimates after rescaling the patches. After the old data and $\eta$
are fixed, choose $r_0$ small enough that, on every new boundary
or interior patch $P$ of scale $\ell_P$,
$$
 \ell_P^2\norm{\kappa pU_0^q}_{C^{0,\theta}_{\mathrm{sc}}(P)}
       \leq\delta,
$$
and the finite-order scaled relative jets of $U_0$ and the weights
$W$ needed here have universal bounds. These choices are possible
because $U_0$ is positive and smooth on the fixed core neighborhood;
the regularized potentials $h_j$ have uniformly bounded scaled relative jets.
The new potential is bounded by \eqref{eq:V-local}. For the residual source H\"older norm, use $\mathcal R_{\mathbf p}/W\leq C S_{\mathbf p}^q$,
\eqref{eq:R-jets}, and \eqref{eq:h-local}. At fixed old collars apply the separated
mass estimate and \eqref{old:regular-source}. The ordinary old-collar estimates are divided by $A_0b_W$.
Fix $A_0\geq1$ before imposing the final radius and coefficient bounds;
then reduce $Q$ so these terms obey the same inequalities.

For \eqref{eq:interface-weighted-bound}, let $w_0$ be the
smooth no-hole solution with old-interface forcing $\xi$.
The identity \eqref{old:regular-source} applies equally to this
forcing. Since $\|w_0\|_{C^{2,\theta}(D)}+
\|w_0|_{\Gamma_N}\|_{\mathcal B}\le C_0\|\xi\|_{C^{1,\theta}}$,
\eqref{old:poisson-X}, the lower bound $W\ge b_WU_0$, and the
old-collar estimates prove the claim.

Write $\mathcal I_{2,D}f=\mathcal I_2(\mathbf1_D f)$ for the Newton
potential of $f$ extended by zero outside $D$.
Finally, $U_0$ and all $h_i$ are positive Newton potentials. The 3G
inequality therefore gives $\mathcal I_{2,D}W\leq C\diam(D)^2W$. Its mass on $D$ is bounded by an old-dependent multiple of $b_W+Q$.
The same patch and collar argument proves bounded multiplication.
By \eqref{eq:bubble-tangents} the residual derivative has an
additional factor $\delta$: its regular coarse response is at most
$C_0\delta Q/b\leq\varepsilon_0\delta^2$, and its refined
response is at most $C_0\delta Q$. The same additional factor applies to the differentiated multiplier,
proving \eqref{eq:structural-derivatives}. No parameter-dependent
weights or domains are differentiated.
\end{proof}

\subsubsection{The exterior solution family}
Write $P=P_Ng$ and seek $E_{\mathbf p}=S_{\mathbf p}+P+v$, $v\in X^0$.
By Lemma~\ref{old:poisson},
\begin{align}\label{eq:P-estimates}
 \norm{P}_{X_\Phi}&\leq Ck\nu^{2a}=O(\delta),&
 \max_i\norm{P}_{X_{\Psi_i}}&\leq Ck,
 \\
 \norm{\dot P}_{X_\Phi}&\leq C\nu^{2a},&
 \max_i\norm{\dot P}_{X_{\Psi_i}}&\leq C.
 \nonumber
\end{align}
Here $k\nu^{2a}=O(\nu^{2+2a})=O(\delta)$.
The fixed old observations of $P,\dot P$ are
$O(C_0k\nu^{2a}Q),O(C_0\nu^{2a}Q)$.

Let $\mathcal N_{\mathrm{old}}(\gamma)=\mathcal N_{\mathrm{old}}(0)+T\gamma+\mathcal E(\gamma)$
be the exact graph of Proposition~\ref{old:graph}, with core normal on
both sides. For $w=P+v$ set
\begin{equation}\label{eq:nonlinear-N}
 N_{\mathbf p}(w)=\kappa\bigl((S_{\mathbf p}+w)^p-S_{\mathbf p}^p-pS_{\mathbf p}^q w\bigr),\qquad
 \gamma=(\sum_i b_i^{\mathbf p}+P+v)|_{\Sigma}.
\end{equation}
The equation and the old boundary relation are exactly the fixed point
\begin{equation}\label{eq:exterior-fixedpoint}
 v=G_N\left[-\kappa\mathcal R_{\mathbf p}-V_{\mathbf p}(P+v)-N_{\mathbf p}(P+v),
      \ T\sum_i b_i^{\mathbf p}-\partial_{\mathbf n}\sum_i b_i^{\mathbf p}+\mathcal E(\gamma)\right].
\end{equation}
The notation $G_N[F,\xi]$ includes old interface forcing $\xi$.
The affine interface term accounts for the bubbles' old boundary discrepancy.

\begin{lemma}[Exterior existence and Cauchy estimates]\label{ext:family}
For a sufficiently small universal $\eta$ and sufficiently small
old-dependent radius and coefficient bounds, \eqref{eq:exterior-fixedpoint}
has a unique solution in a ball $C\delta$ of $X^0$, jointly $C^1$ in
$\mathbf p$. Uniformly on $\norm{\mathbf p}\leq Rk$,
\begin{align}\label{eq:v-estimates}
 \norm{v}_{X_\Phi}+\max_i\norm{v}_{X_{\Psi_i}}&\leq C\delta,
 \\
 \norm{\dot v}_{X_\Phi}&\leq C(\delta\nu^{2a}+\delta^2),&
 \max_i\norm{\dot v}_{X_{\Psi_i}}&\leq C\delta.
 \nonumber
\end{align}
Moreover
\begin{equation}\label{eq:exterior-Cauchy}
 \max_i\left(\norm{\rho_i\partial_{R_i}v/B_i}_{C^{1,\theta}}
 +\norm{D_{\mathbf p}(\rho_i\partial_{R_i}v/B_i)}_{\cE_N\to C^{1,\theta}}\right)
 \leq C\delta.
\end{equation}
The exterior Jacobi problem has zero kernel among fields with zero
full trace on the new spheres, old radial coefficient vector in
$\mathcal W'$, and physical remainder decaying at rate $d_-$ on
every eliminated old tail.
\end{lemma}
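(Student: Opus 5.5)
We solve \eqref{eq:exterior-fixedpoint} by the contraction mapping principle on the ball $\{v\in X^0:\norm{v}_{X^0}\le C_1\delta\}$, where $C_1$ is a universal constant fixed below. Write $\mathcal M_{\mathbf p}(v)$ for the right-hand side.

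\textbf{Self-map.} We bound the terms of $\mathcal M_{\mathbf p}(v)$ one at a time.
\begin{itemize}
\item The residual contributes $\norm{G_N\mathcal R_{\mathbf p}}_{X_W}\le C\delta$ by Lemma~\ref{ext:structural}, for $W=\Phi$ and each $\Psi_i$.
\item The linear potential term $G_N(V_{\mathbf p}(P+v))$ has norm at most $(C\delta+o_0(1))(\norm{P}_{X_W}+\norm{v}_{X_W})$. By \eqref{eq:P-estimates}, in $X_\Phi$ this is $O(\delta)\cdot O(\delta)$. In $X_{\Psi_i}$ it is $(C\delta+o_0(1))\,Ck=O(\delta\nu^2+\delta^2)+o_0(1)$. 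Since $k=\nu^2+\delta$ and $\delta=\nu^n$ with $n\ge3$, we have $\delta k\le C\delta$ in either case.
\item For the nonlinearity $N_{\mathbf p}(P+v)$ we use the pointwise bound $|N_{\mathbf p}(w)|\le C\,S_{\mathbf p}^{p-2}w^2$ when $p\ge2$, and $|N_{\mathbf p}(w)|\le C|w|^p$ otherwise, on the region $|w|\le S_{\mathbf p}/2$. The smallness of $|w|/S_{\mathbf p}$ follows from $|w|\le C k\Psi_i$ together with $S_{\mathbf p}\ge c\Psi_i$ on the patches. Thus $N_{\mathbf p}(w)\le C(k+\delta)V_{\mathbf p}|w|+\text{(old part)}$, and the multiplier estimate of Lemma~\ref{ext:structural} applies again. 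The old part is handled by the bounded-multiplier estimate and is made $o_0(1)$ by shrinking $Q$.
\item For the interface term we use \eqref{eq:interface-weighted-bound}. The bubbles' discrepancy $T\sum b_i^{\mathbf p}-\partial_{\mathbf n}\sum b_i^{\mathbf p}$ is $O(C_0Q)$ in $C^{1,\theta}(\Sigma)$. By \eqref{old:graph-expansion}, $\mathcal E(\gamma)$ is $O(C_0\norm{\gamma}^2)$ with $\gamma=O(C_0(Q+k\nu^{2a}Q+\delta A_0b))$. After dividing by $b_W\ge b$, all of this is $o_0(1)$ because $Q/b\to0$.
\end{itemize}
Choosing $\eta$ universal and then the radii and $Q$ small, $\mathcal M_{\mathbf p}$ maps the ball into itself.

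\textbf{Contraction.} The same estimates applied to differences give Lipschitz constant $C\delta+Ck+o_0(1)<1/2$. Here $D_wN_{\mathbf p}$ is controlled by $V_{\mathbf p}$ times $|w|/S_{\mathbf p}$, and $D\mathcal E$ by $C_0\norm{\gamma}$. This yields existence and uniqueness in the ball. The equation is equivalent to the PDE plus the old boundary relation because of the definition of $G_N[F,\xi]$ and the exact graph $\mathcal N_{\rm old}$ from Proposition~\ref{old:graph}.

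\textbf{$C^1$ dependence and derivative bounds.} We apply the implicit function theorem to $v-\mathcal M_{\mathbf p}(v)$. The weights and domains are independent of $\mathbf p$, and the $\mathbf p$-derivatives of the sources are $C^1$. Differentiating in $\mathbf p$ gives
\[
 (I-D_v\mathcal M)\dot v=\partial_{\mathbf p}\mathcal M.
\]
The right side involves three pieces.
\begin{itemize}
\item The differentiated residual and multiplier terms from \eqref{eq:structural-derivatives} are $O(\delta^2)$.
\item The term $G_N(V_{\mathbf p}\dot P)$ is $O(\delta\nu^{2a})$ in $X_\Phi$ and $O(\delta)$ in $X_{\Psi_i}$, by \eqref{eq:P-estimates}.
\item The nonlinear derivative terms are of the same or smaller order.
\end{itemize}
This gives \eqref{eq:v-estimates}.

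\textbf{Cauchy data.} The bound \eqref{eq:exterior-Cauchy} follows from the scaled boundary-patch part of the $X_{\Psi_i}$ norm. On the patch at $\Gamma_i$, $\Psi_i\le CB_i$ by the row bounds \eqref{old:rows} and the comparability of $h$ there. Hence $\rho_i\partial_{R_i}v/B_i$ is bounded in $C^{1,\theta}$ by $C\norm{v}_{X_{\Psi_i}}$, and likewise for $\dot v$.

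\textbf{Kernel claim.} Let $J$ solve the linearized exterior problem, with zero new traces, old coefficients in $\mathcal W'$, and decay $d_-$. Use Proposition~\ref{old:graph}, whose tangent identification represents $J$ on the eliminated pieces via $T$. This reduces the problem to $J=G_N[-(V_{\mathbf p}+D N)J,\,D\mathcal E\,J]$ on $D_N$. After normalizing $J$ in $X_{\Psi_i}$ (it is bounded by elliptic estimates near the holes), the contraction bound forces $J=0$.

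\textbf{Main obstacle.} The principal difficulty is the nonlinear term near the holes in the $\Psi_i$ norm. There the perturbation $P$ is only $O(k)$, not $O(\delta)$, relative to $\Psi_i$, and we must verify that $N_{\mathbf p}(P+v)$ still produces an $O(\delta)$ response. We would first try the bound $|N_{\mathbf p}(w)|\le C(|w|/S_{\mathbf p})^{\min(1,p-1)}V_{\mathbf p}|w|$ combined with the multiplier estimate. If the factor $k^{\min(1,q)}$ is not small enough relative to the constant $C_1$, we would instead use the $h_i^q\Psi_i$ splitting of \eqref{eq:refined-source} directly.
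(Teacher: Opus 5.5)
There is a genuine gap, and it sits exactly at the point you flag as the main obstacle: the nonlinear term $N_{\mathbf p}$ and its derivative $D_wN_{\mathbf p}$. The rest of your architecture matches the paper's. That includes the contraction on the intersection ball of $X^0$, Lemma~\ref{ext:structural} for the residual and the multiplier, and \eqref{eq:interface-weighted-bound} with $Q/b\to0$ for the interface. It also includes the differentiated fixed point, the boundary patches for the Cauchy data, and contraction for the kernel.

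You measure $|w|/S_{\mathbf p}$ through $|w|\le Ck\Psi_i$ and $S_{\mathbf p}\ge c\Psi_i$. This gives only $|N_{\mathbf p}(w)|\le CS_{\mathbf p}^{q-1}w^2\le Ck\,S_{\mathbf p}^{q}|w|$. But $S_{\mathbf p}^q$ contains the old part $U_0^q$, which $V_{\mathbf p}=\kappa p(S_{\mathbf p}^q-U_0^q)$ does not control.

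The leftover term $CkU_0^q|w|$ can only go through the bounded-multiplier estimate. Its constant $C_{0,m}$ is old-dependent and does not shrink with the radii or with $Q$. The response is therefore $C_0k\norm{w}_{X_W}$, not $o_0(1)$ as you claim. Closing the self-map or the contraction would then require $C_0k$ to be small. That is an old-dependent restriction on $\eta$, which the lemma forbids, as does the iteration in Section~\ref{sec:limit}, where $\eta$ is fixed once for all steps.

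For the same reason, two of your claimed bounds are false in the region $h\ll U_0$:
\begin{itemize}
\item the assertion that $D_wN_{\mathbf p}$ is controlled by $V_{\mathbf p}\,|w|/S_{\mathbf p}$;
\item the fallback $|N_{\mathbf p}(w)|\le C(|w|/S_{\mathbf p})^{\min(1,q)}V_{\mathbf p}|w|$.
\end{itemize}
In that region $V_{\mathbf p}$ is of order $U_0^{q-1}h$, while $D_wN_{\mathbf p}$ is of order $U_0^{q-1}|w|$. The splitting \eqref{eq:refined-source} does not rescue this either. With only $\Psi_i$ control, its low-region part leaves a fixed old source of size $k^2U_0^{q+1}$.

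The missing idea is to take the size of $w=P+v$ from the coarse weight, not from $\Psi_i$. Since $\norm{P}_{X_\Phi}\le Ck\nu^{2a}\le C\delta$ and $v$ lies in the intersection ball, you have $|w|\le C\delta\Phi=C\delta(h+bU_0)$. Using $hS_{\mathbf p}^{q-1}\le C(S_{\mathbf p}^q-U_0^q)$ and $U_0S_{\mathbf p}^{q-1}\le S_{\mathbf p}^q$, this gives
\[
 |D_wN_{\mathbf p}(w)|\le C|w|S_{\mathbf p}^{q-1}\le C\delta V_{\mathbf p}+C_0\delta b .
\]
The old part now carries the factor $b=Q^{1/n}$, and it is removed by reducing $Q$ after $\eta$ is fixed.

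Now write $N_{\mathbf p}(P+v)=\int_0^1D_wN_{\mathbf p}(\tau(P+v))\,\dd\tau\,(P+v)$ and apply the multiplier estimate in each $X_W$. The response is at most $[C\delta(C\delta+o_0(1))+C_0\delta b]\norm{P+v}_{X_W}$. In $X_{\Psi_i}$ this is $O(\delta^2k)+o_0(1)$. So the fact that $P$ is only $O(k)$ relative to $\Psi_i$ is harmless, and your obstacle disappears.

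The same multiplier bound gives a contraction constant below $1/4$ with universal $\eta$, which your kernel argument also needs. For the $\dot v$ estimates, the $\mathbf p$-derivative of the Taylor remainder needs both amplitudes paired explicitly, not the ``same or smaller order'' remark:
\[
 \norm{N_S\dot S}_{C^{0,\theta}_{\mathrm{sc}}}\le C\delta h\,\mathfrak A_c\mathfrak A_WS_{\mathbf p}^{q-2}\le C\delta^2V_{\mathbf p}\mathfrak A_W .
\]
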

\begin{proof}
On the intersection ball, $|w|\leq C\delta\Phi$ and
$b$ is small, so $|w|/S_{\mathbf p}<1/2$. Smooth Taylor formula gives
\begin{equation}\label{eq:N-multiplier}
 |D_wN_{\mathbf p}(w)|\leq C|w|S_{\mathbf p}^{q-1}
 \leq C\delta V_{\mathbf p}+C_0\delta b.
\end{equation}
For the last inequality split at $h=U_0$; equivalently use
$hS_{\mathbf p}^{q-1}\leq C(S_{\mathbf p}^q-U_0^q)$ and
$U_0S_{\mathbf p}^{q-1}\leq C((S_{\mathbf p}^q-U_0^q)+U_0^q)$.
Lemma~\ref{ext:structural} controls the first term. The second term
is $b$ times a fixed old smooth multiplier, so its inverse response
has norm at most $C_0\delta b$. Thus it imposes a condition on $Q$,
without imposing an old-dependent restriction on $\eta$.
For the scaled H\"older multiplier bound, on a patch $P$ write
$D_wN=\kappa pS_{\mathbf p}^q((1+w/S_{\mathbf p})^q-1)$. The relative jets of $S_{\mathbf p}$ and
$\Phi$, and the coarse local jets of $w$, give
$$
 \norm{D_wN}_{C^{0,\theta}_{\mathrm{sc}}(P)}
 \leq C\delta\Phi(x_P)S_{\mathbf p}(x_P)^{q-1}
 \leq C\delta\bigl(V_{\mathbf p}(x_P)+b U_0(x_P)^q\bigr).
$$
Multiplying by the local jets of $f$ and using the same positive
envelopes as in Lemma~\ref{ext:structural} therefore gives
$\norm{G_N(D_wN f)}_{X_W}
\leq[C\delta(C\delta+o_0(1))+C_0\delta b]\norm{f}_{X_W}$.
On fixed old collars the extra constants carry $b$, hence shrink with $Q$.
Also
$\norm{\gamma}_{C^{2,\theta}}\leq
C_0(Q+A_0\delta b)$. Reducing $Q$ makes the old graph derivative small. Equation~\eqref{eq:interface-weighted-bound} yields, for the nonlinear
boundary contribution,
$$
 \|G_N[0,D\mathcal E(\gamma)(f|_\Sigma)]\|_{X_W}
 \leq C_0A_0\|\gamma\|_{C^{2,\theta}(\Sigma)}\|f\|_{X_W}.
$$
The affine interface forcing has size $O(C_0Q)$, and hence coarse
inverse norm $O(C_0Q/b)$. The quadratic graph term has coarse inverse
norm at most $C_0(Q+A_0\delta b)^2/b$. With $\eta,A_0$ fixed, both
are made as small as required by reducing $Q$. The refined estimates
have no factor $1/b$.

The preceding bounds show that the fixed-point map preserves a
$C\delta$ ball in $X^0$ and has derivative in $v$ of norm less than
$1/4$, so Banach's theorem gives a unique solution. The intersection
ball controls the old trace; at the solution the same contraction
bound holds on every individual $X_W$.

For each fixed finite configuration, the domain and weights are fixed,
and the weighted norms are equivalent to ordinary H\"older norms,
with configuration-dependent constants. The $C^1$ inner data from
Proposition~\ref{in:family}, the $C^2$ old graph, and the smooth power
nonlinearity in the positive neighborhood permit the implicit function
theorem, giving $C^1$ dependence on $\mathbf p$ in $X^0$. The norm equivalences
provide no uniform constants; the uniform derivative estimates follow
by differentiating \eqref{eq:exterior-fixedpoint}.
Keep $\dot P$ explicit with the orders in \eqref{eq:P-estimates}; it is
not of order $\delta$ in every refined space.
Its force $V_{\mathbf p}\dot P$ has coarse order $\delta\nu^{2a}$ and refined
order $\delta$. The derivatives of $\mathcal R_{\mathbf p},V_{\mathbf p}$ have the
extra factor $\delta$ in \eqref{eq:bubble-tangents}.

To differentiate the Taylor remainder, write
$N(S,w)=w^2 S^{q-1}F(w/S)$ and
$N_S(S,w)=w^2 S^{q-2}F_1(w/S)$, with smooth $F,F_1$ for $|w/S|<1/2$.
The scaled jets of $w$ are bounded simultaneously by
$\mathfrak A_c=C\delta\Phi$ and $\mathfrak A_i=Ck\Psi_i$; hence for $\mathfrak A_W=\mathfrak A_c$ or $\mathfrak A_i$,
$$
 \norm{N_S\dot S}_{C^{0,\theta}_{\mathrm{sc}}}
 \leq C\delta h\mathfrak A_c\mathfrak A_W S^{q-2}
 \leq C\delta^2 V_{\mathbf p}\mathfrak A_W.
$$
These positive amplitude bounds also control the scaled jets.
For the old derivative terms put $\zeta_{\rm old}=Q/b$.
A unit parameter tangent has affine old forcing of ordinary size
$C_0\delta Q$, whose coarse inverse response is
$C_0\delta\zeta_{\rm old}\leq\varepsilon_0\delta^2$ by
\eqref{eq:regular-budget}. Moreover,
$$
 \norm{\gamma}_{C^{2,\theta}}\leq C_0(Q+A_0\delta b),\qquad
 \norm{\textstyle\sum_i\dot b_i^{\mathbf p}+\dot P}_{\rm old}
       \leq C_0(\delta+\nu^{2a})Q.
$$
The part of $D\mathcal E(\gamma)$ containing these explicit
tangents consequently has coarse response at most
$$
 C_0(Q+A_0\delta b)(\delta+\nu^{2a})\zeta_{\rm old}.
$$
After \eqref{eq:regular-budget}, further reducing $Q$ makes this
at most $\varepsilon_0(\delta^2+\delta\nu^{2a})$. The part containing $\dot v$ is included in the linearized
fixed-point operator; its norm
is at most $C_0A_0(Q+A_0\delta b)$, which can be made less than
$1/16$. Also require $C_0b\leq\varepsilon_0$ for the old bounded
multiplier term. All these conditions are compatible with fixed
$\eta,A_0,r_0$, since $b,\zeta_{\rm old}\to0$. In the refined
spaces the factors $1/b$ are absent, and the same explicit sources
are $O(\delta)$. Inverting this linearization in each $X_W$
proves \eqref{eq:v-estimates}. Boundary Schauder estimates with zero
new trace prove \eqref{eq:exterior-Cauchy}.

Finally let $J$ be a current exterior Jacobi field with radial
coefficient vector in $\mathcal W'$, physical remainder decaying at rate
$d_-$ on every eliminated old tail, and zero full trace on the new
spheres. For fixed $N$ it has finite $X_W$ norms on the core.
The lower-weight tail uniqueness in Proposition~\ref{old:graph} gives the
exact linearized old boundary relation. It follows that
$J=D_v\mathcal T(v)J$, where $\mathcal T$ is the right side of
\eqref{eq:exterior-fixedpoint}. The contraction estimate forces $J=0$.
\end{proof}
\subsection{Matching and nondegeneracy}\label{sec:matching}
We match the interior and exterior Cauchy data, following the
Dirichlet-to-Neumann method of \cite[Section~7]{MP1999} and
\cite[Sections~3--5]{Santos2010}.
Their Dirichlet traces already agree. On both sides of each new sphere,
$\partial_{R_i}$ points away from $y_i$, opposite to the exterior outward normal.

\subsubsection{The matching operator}
Define
\begin{equation}\label{eq:mismatch}
 \cM_i(\mathbf p)=\frac{\rho_i}{B_i}
     (\partial_{R_i} I_i-\partial_{R_i} E_{\mathbf p})|_{\Gamma_i}.
\end{equation}
The target is the maximum product of $C^{1,\theta}(\Sph^{n-1})$.
The leading operator from \eqref{eq:parameter-space} is
\begin{equation}\label{eq:leading-matching}
 L_i(b_i,\mathbf c_i,\psi_i)
 =2ab_i+2an\,\mathbf c_i\cdot\omega
       +\sum_{\ell\geq2}(2\ell+2a)(\psi_i)_\ell.
\end{equation}
The subscript $\ell$ denotes the full degree-$\ell$ spherical harmonic
component. Identify the parameter triple with the boundary function
$$
 f_i=b_i+2a\,\mathbf c_i\cdot\omega+\psi_i
       \in C^{2,\theta}(\Sph^{n-1}).
$$
This identification and its inverse are bounded with constants depending
only on $n$ and $\theta$. If $\mathcal D_{\mathrm{in}}$ is the Dirichlet-to-Neumann
operator for regular harmonic functions on $B_1$, then
\begin{equation}\label{eq:matching-robin}
 L_i(b_i,\mathbf c_i,\psi_i)
   =2(\mathcal D_{\mathrm{in}}+a)f_i
   =2\sqrt{-\Delta_{\Sph^{n-1}}+a^2}\,f_i.
\end{equation}
Indeed $\mathcal D_{\mathrm{in}}$ has eigenvalue $\ell$ on degree $\ell$, and
$2(a+1)(2a)=2an$ on the first-degree block.

\begin{lemma}\label{match:linear}
$L$ is an isomorphism from $\cE_N$ to the target product space.
Its inverse norm is independent of $N$ and the radii.
\end{lemma}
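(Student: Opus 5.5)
The operator $L$ is block diagonal: the $i$th component of $L(\mathbf p)$ depends only on $(b_i,\mathbf c_i,\psi_i)$. The space $\cE_N$ and the target both carry the maximum of the componentwise norms. It therefore suffices to show that a single block
$$
 L_1:\R\times\R^n\times\Pi_{\mathrm{hi}}C^{2,\theta}(\Sph^{n-1})\longrightarrow C^{1,\theta}(\Sph^{n-1})
$$
is an isomorphism with bounds depending only on $n$ and $\theta$. The inverse of $L$ is then the diagonal of the block inverses. Its norm in the maximum product norm is the same single-block constant, which is independent of $N$, of the radii, and of every other configuration datum.

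For one block, I would use the identification $f=b+2a\,\mathbf c\cdot\omega+\psi$ together with \eqref{eq:matching-robin}. This reduces the claim to invertibility of $2\sqrt{-\Delta_{\Sph^{n-1}}+a^2}$ from $C^{2,\theta}(\Sph^{n-1})$ onto $C^{1,\theta}(\Sph^{n-1})$. Injectivity and spectral surjectivity are immediate: on degree $\ell$ the operator multiplies by $2\sqrt{\ell(\ell+n-2)+a^2}=2(\ell+a)$, using $n-2=2a$. These eigenvalues are at least $2a>0$. The inverse on degree $\ell$ multiplies by $(2(\ell+a))^{-1}$, and in particular the parameters are recovered as $b=(2a)^{-1}f_0$, $\mathbf c$ from the degree-one block divided by $2an$, and $\psi$ from the high modes divided by $2\ell+2a$.

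The main point is the H\"older mapping property: the inverse must send $C^{1,\theta}$ boundedly into $C^{2,\theta}$, not merely act on $L^2$. I would write $(\mathcal D_{\mathrm{in}}+a)^{-1}$ through harmonic extension. Given $g\in C^{1,\theta}$, the function $F=(\mathcal D_{\mathrm{in}}+a)^{-1}g$ is the trace of the unique harmonic $u$ on $B_1$ with Robin data $r\partial_ru+au=g$ on $\partial B_1$. Equivalently, $u(x)=\int_0^1 t^{a-1}w(tx)\,\dd t$, where $w$ is the harmonic extension of $g$; on degree $\ell$ this gives the factor $1/(\ell+a)$. A second route is to note that $\sqrt{-\Delta+a^2}$ is a classical elliptic pseudodifferential operator of order one, so it is invertible $C^{2,\theta}\to C^{1,\theta}$ by elliptic regularity in H\"older–Zygmund spaces. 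Either way the bound depends only on $n,a,\theta$. The identification $f\leftrightarrow(b,\mathbf c,\psi)$ is bounded in both directions because the projections onto degrees zero and one are finite rank with smooth kernels, and $\Pi_{\mathrm{hi}}$ is therefore bounded on $C^{2,\theta}$.

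I expect this H\"older regularity of the inverse Robin/Dirichlet-to-Neumann operator to be the only real obstacle; I would settle it by the pseudodifferential route.
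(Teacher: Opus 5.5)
Your proposal is correct and follows the paper's argument. The paper likewise uses \eqref{eq:matching-robin} to reduce to inverting the positive Robin operator $\mathcal D_{\mathrm{in}}+a$, gets the bounded inverse $C^{1,\theta}\to C^{2,\theta}$ from the standard Robin Schauder estimate \cite[Chapter~6]{GT2001} (your harmonic-extension route), and then obtains uniformity in $N$ and the radii from the block-diagonal structure and the maximum product norm; your pseudodifferential alternative also works but is not needed.
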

\begin{proof}
The positive Robin operator $\mathcal D_{\mathrm{in}}+a$ has a
bounded inverse $C^{1,\theta}\to C^{2,\theta}$ by the standard
Robin Schauder estimate \cite[Chapter~6]{GT2001}.
Equation~\eqref{eq:matching-robin} and the maximum product norm
give the assertion.
\end{proof}

\begin{lemma}\label{match:estimate}
On $\norm{\mathbf p}\leq Rk$,
\begin{align}\label{eq:mismatch-estimates}
 \norm{\cM(0)}&\leq C_*k+o_0(1),
 \\
 \norm{D\cM-L}&\leq C_R(\nu+s^{2-\mu_3})+o_0(1),
 \nonumber
\end{align}
The baseline constant $C_*$ is chosen independently of $R$.
\end{lemma}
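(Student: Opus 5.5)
The plan is to split the mismatch into an inner contribution and an exterior contribution, and to compare each with the explicit formulas of Section~\ref{in:section} and Proposition~\ref{old:product-dtn}. On $\Gamma_i$ write
$$
I_i-E_{\mathbf p}=\bigl(\mathcal T_{\mathbf a_i}V_B+H_{B_i\psi_i}+W_i\bigr)
 -\bigl(U_0+\textstyle\sum_{j\ne i}b_j^{\mathbf p}\bigr)-P_Ng-v,
$$
using the own-bubble cancellation \eqref{eq:own-cancellation}. Applying $\rho_i\partial_{R_i}/B_i$ to this identity gives $\cM_i$ as a sum of five pieces:
\begin{enumerate}
\item the transformed collar difference $\mathcal T_{\mathbf a_i}V_B$;
\item the harmonic extension plus correction, $H+W_i$;
\item the frozen background $U_0+\sum_{j\ne i}b_j^{\mathbf p}$;
\item the Poisson term $P_Ng$;
\item the exterior correction $v$.
\end{enumerate}

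\textbf{Baseline estimate at $\mathbf p=0$.} Here $\psi_i=0$, so $H=W_i=0$. At $\mathbf z=0$, \eqref{in:collar} gives that $\rho_i\partial_rV_B/B$ is $O(s^2)$.

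The choice $\mathbf a_i^0$ in \eqref{eq:frozen-background} is made exactly so that the degree-one jet of the background cancels, to leading order, against the first-order derivative $2aB\,\mathbf c\cdot\omega$ of $\mathcal T_{\mathbf z}V_B$. The remaining error is of order $\rho_i^2|D^2(\text{background})|/B_i$. By \eqref{pot:backgroundjets} with $r_i=\rho_i/\nu$ this is $O(\nu^2)$; the $\sigma=\rho_i|\mathbf a_i^0|=O(\nu)$ correction from the collar formulas is treated the same way. The degree-zero part of the background derivative is likewise $O(\nu^2)$ after division by $B_i$.

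The Poisson term is handled by Proposition~\ref{old:product-dtn} together with \eqref{eq:g-estimates}. At $\mathbf p=0$ the trace $g$ is of size $O(s^2+\nu^2)$ in $\cB$, since the own trace is $B_i+O(B_is^2)$ minus the background $B_i+O(\nu)$ jets. Hence the Poisson term contributes $C(k+\nu)$, with the $O(\nu)$ first-order piece already absorbed by the choice of $\mathbf a_i^0$.

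The exterior correction $v$ contributes $C\delta$ by \eqref{eq:exterior-Cauchy}. All old-dependent remainders, namely $\mathcal R_{\rm old}$ and the old-jet approximation, are collected into $o_0(1)$. Together these give $\norm{\cM(0)}\le C_*k+o_0(1)$, with $C_*$ depending only on universal data and not on $R$, because no parameter enters.

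\textbf{Derivative estimate.} Differentiate the five pieces in $(b_i,\mathbf c_i,\psi_i)$.
\begin{enumerate}
\item The $b$-derivative of $\mathcal T V_B$ has normalized trace $B$ and zero radial derivative up to $O(s^2)$. Combined with the $\mathcal D$-action of the Poisson term on the trace component $b_i$, this produces $-(-2a)b_i=2ab_i$.
\item The $\mathbf c$-derivative has trace $2a\mathbf c\cdot\omega$ and normal derivative $2a\mathbf c\cdot\omega$, up to $O(\sigma+s^2)$. Against the exterior operator $-(1+2a)$ on degree one, this gives $(2a+2a(1+2a))\mathbf c\cdot\omega$, which equals $2an\,\mathbf c\cdot\omega$.
\item The $\psi$-derivative gives $\mathcal D_{\rm in}\psi+(\ell+2a)\psi$, which is $(2\ell+2a)\psi_\ell$ on degree $\ell$. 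Its error is controlled by \eqref{in:w-psi} at size $s^{2-\mu_3}$, and by \eqref{in:w-b}–\eqref{in:w-c} for the off-diagonal errors $\mathcal E$.
\end{enumerate}
These three contributions reproduce $L$. The background depends on $\mathbf p$ only through the other transformed bubbles, which by \eqref{eq:bubble-tangents} contribute $O(\delta)$. The Poisson off-diagonal error from \eqref{old:dtn-comparison} is $O(\nu^{2a})\le O(\nu)$, and $D_{\mathbf p}v$ is $O(\delta)$ by \eqref{eq:exterior-Cauchy}. On the parameter ball $\norm{\mathbf p}\le Rk$ the displaced point produces $\sigma=O(\nu+Rk)$, so these errors are $C_R\nu$.

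\textbf{Main obstacle.} I expect the hardest step to be the degree-one bookkeeping. The normalized gradient of the background at $y_i$ is of order $\rho_i/r_i=\nu$, which is not small relative to $k$. It must therefore be cancelled exactly, using the collar formula for $\partial_{\mathbf c}\mathcal T V_B$ and the definition of $\mathbf a_i^0$, and the second-order Taylor remainders must be checked to be $O(\nu^2)$ uniformly, using the jet bounds on $B(y_i,cr_i)$. I would first verify this cancellation at $\mathbf p=0$ by a direct Taylor expansion in $x-y_i$.
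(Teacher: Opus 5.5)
Your proposal is correct and follows essentially the same route as the paper. Both use the own-bubble cancellation, a Taylor expansion of $U_0+\sum_{j\ne i}b_{\lambda_j,y_j}$ at $y_i$ whose gradient is matched through $\mathbf a_i^0$ by the special conformal trace/flux pair $(2aB,2aB+O(\delta B))$, the collar formulas for the $b$ and $\mathbf c$ blocks, Proposition~\ref{old:product-dtn} for the Poisson flux, \eqref{eq:exterior-Cauchy} for $v$, and Proposition~\ref{in:family} for the inner correction.

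There is one internal slip in your baseline paragraph. At $\mathbf p=0$ the own trace is $B_i+2aB_i\rho_i\mathbf a_i^0\cdot\omega+O(kB_i)$, not $B_i+O(B_is^2)$. It is exactly this degree-one term that cancels the background gradient in $g$, so the Poisson contribution is $O(k)+o_0(1)$. Your stated $C(k+\nu)$ would break the $C_*k$ bound. When you invoke \eqref{in:w-b}, also record the conversion $(1+\log(1/s))s^{2-\mu_2}\le C_d s^{2-\mu_3}$.
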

\begin{proof}
The own transformed bubbles cancel. The Cauchy-data formulas of
Section~\ref{in:section} give trace/flux pairs $(B,0)$ for a $b$
variation and $(2aB,2aB+O(\delta B))\dot{\mathbf c}\cdot\omega$
for a special conformal variation at zero. Combining these with
the exterior eigenvalues $-(\ell+2a)$ and the interior high-mode
eigenvalues $\ell$ gives the three blocks of
\eqref{eq:leading-matching}, with error $O(\delta)$.

At the baseline, Taylor-expand
$U_0+\sum_{j\ne i}b_{\lambda_j,y_j}$ at $y_i$.
Its constant term is $B_i$ and its gradient agrees with
$2aB_i\mathbf a_i^0$. The second-order error on the cutoff sphere
is $O(\nu^2B_i)$ by \eqref{pot:backgroundjets}. The change in
other bubbles caused by \eqref{eq:transformed-bubble} is
$O(\eta^2B_i)$ at the baseline and $O(\delta B_i)$ for unit
tangents. For $V_B$, the boundary data and radial equation bound the required
nonconstant jets and their first parameter derivatives on a two-sided
annulus by $O(\delta B)$.
Taylor expansion in $|\mathbf a_i|\rho_i=O(\nu+Rk)$ gives the
low-mode derivative error $O(\nu+\delta+Rk)$.

Proposition~\ref{in:family} bounds the normalized Cauchy-data
contribution of the inner correction and its parameter derivative by
$O_R(\nu+s^{2-\mu_3})$. Indeed, the fixed gap
$d=\mu_3-\mu_2>0$ gives
$$
 (1+\log(1/s))s^{2-\mu_2}
 =s^{2-\mu_3}(1+\log(1/s))s^d
 \leq C_d s^{2-\mu_3},
$$
while $\nu^aRk\leq2R\nu$ since $k\leq2\nu^2$ and $\nu\leq1$.
Proposition~\ref{old:product-dtn} gives the
full exterior Poisson flux as its diagonal harmonic operator plus
$O(\nu^{2a})+o_0(1)$. The derivatives of the boundary discrepancy $g_i$, after the own
bubble cancellation, are bounded in the $B_i$-normalized norm by
\eqref{eq:g-estimates}.
The zero-trace nonlinear correction has flux error $O(\delta)$ by
\eqref{eq:exterior-Cauchy}. Since $n\geq3$ and $\nu\leq1$,
$\delta=\nu^n\leq\nu$, $\nu^{2a}=\nu^{n-2}\leq\nu$, and
$Rk\leq2R\nu$. Thus all the universal derivative errors are absorbed
in $C_R(\nu+s^{2-\mu_3})$, proving the derivative estimate.

At $\mathbf p=0$ the high data and its correction vanish. The exterior
correction is bounded by its force times $(1-1/4)^{-1}$, independently
of the parameter-ball radius. Its flux and the Taylor errors therefore
give $C_*$ independently of $R$.
\end{proof}

Choose
$R=2\norm{L^{-1}}(C_*+1)$, and then choose $\eta_*$ universally so
the universal error in \eqref{eq:mismatch-estimates} is less than
$1/(4\norm{L^{-1}})$. The old-dependent radius and coefficient choices
make the remaining derivative error smaller than the same quantity
and the remaining baseline error at most $k$. The map $\mathbf p\mapsto\mathbf p-L^{-1}\cM(\mathbf p)$ contracts the ball $\norm{\mathbf p}\leq Rk$
into itself. At its fixed point, the positive interior and exterior
solutions have matching values and normal derivatives. Green's
identity gives the equation across every interface without a surface
measure, and interior elliptic regularity makes it smooth.

\subsubsection{Representation of decaying Jacobi fields}
The absence of a decaying kernel follows from matching only after
one proves that the inner parameter tangents exhaust all decaying
Jacobi fields. We establish that representation next.

\begin{lemma}\label{match:inner-kernel}
Let $I=\mathcal T_{\mathbf a}U_B+Z$ be an inner solution above, with
$Z\in C^{2,\theta}_{\mu_3}$. Every additive Jacobi field $J$ for $I$
that belongs to $C^{2,\theta}(\overline{B_\rho}\setminus B_{\rho'})$
for every $0<\rho'<\rho$, and whose relative factor $J/I$ is $O(r^d)$ at the puncture for some
$d>0$ is a linear combination of the special conformal and
high-boundary-data tangents of $I$, with $A$ and $B$ fixed. Constants in this representation may depend
on the fixed positive neck.
\end{lemma}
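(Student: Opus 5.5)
The plan is to decompose $J$ by its Cauchy data on the outer sphere $\partial B_\rho$ and reduce to the injectivity of the local linearized operator from \eqref{in:inverse}. First I will upgrade the decay. The hypothesis says $J/I=O(r^d)$ for some $d>0$. Since $I$ is comparable to $r^{-a}v$ with a positive periodic profile bounded above and below, this means $r^aJ=O(e^{-dt})$ in cylindrical variables.

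For the linearized operator $L_I=\Delta+\kappa pI^q$ at the Delaunay end, the indicial roots are:
\begin{itemize}
\item in the radial sector, the two neutral roots (periodic and linearly growing fields);
\item in degree one, the roots $\pm1$;
\item in degrees $\ell\geq2$, roots of modulus exceeding $1$, by the coercivity bound $a^2+2n-\kappa pv^q\geq a+2$.
\end{itemize}
There are no indicial roots in $(0,1)$. Standard asymptotic analysis along the end, via variation of constants mode by mode, which is the same analysis as in Lemma~\ref{old:common-complement}, then shows that any solution decaying at some positive rate decays at every rate $<1$. So $J$ lies in the additive weighted space, with $r^aJ=O(e^{-\mu t})$ for each $\mu<2$ not an indicial root. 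In particular $J\in X_{\mu_1}$ after weighted Schauder estimates on the annuli $\mathcal A_r$. Here $Z\in C^{2,\theta}_{\mu_3}$ is a small perturbation of the potential by \eqref{in:potential}, so the weighted theory for $\mathcal T_{\mathbf a}U_B$ transfers by the Neumann series already used for \eqref{in:inverse}.

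Next I will produce a tangent with the same high-mode trace. Let $\psi_J=B_*^{-1}\Pi_{\mathrm{hi}}J|_{r=\rho}$. By Proposition~\ref{in:family}, the family $\psi\mapsto I(b,\boldsymbol c,\psi)$ with $b$ fixed (so $A,B$ fixed) is $C^1$. Its $\psi$-derivative $D_\psi I[\dot\psi]=H_{B_*\dot\psi}+D_\psi W[\dot\psi]$ has high-mode outer trace exactly $B_*\dot\psi$, because $W$ has zero high trace. This derivative is a decaying Jacobi field for $I$. Similarly, the $\boldsymbol c$-derivatives are decaying Jacobi fields. By \eqref{in:sct-derivative} they are $O(r/\rho)$ relative to $I$, so they vanish at the puncture at rate $1$. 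Their low-mode traces on $\partial B_\rho$ are given by the collar formulas: the degree-one part is $2aB\,\dot{\boldsymbol c}\cdot\boldsymbol\omega$ up to $O(B(\sigma+s^2))$, and the high part is zero.

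I will subtract $D_\psi I[\psi_J]$ and a suitable $\partial_{\boldsymbol c}I[\dot{\boldsymbol c}]$ from $J$. This leaves a decaying Jacobi field $J'$ whose high-mode trace vanishes and whose degree-one trace vanishes. The degree-one map is invertible because its leading part is the scalar $2aB$.

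Finally I need $J'=0$. It lies in $X_{\mu_1}^{\mathrm{hi},0}$ and solves $L_IJ'=0$. Injectivity of $L_I$ on $X^{\mathrm{hi},0}_\mu$ is the uniqueness statement around \eqref{in:inverse}, perturbed by the small $Z$. That statement rests on three facts:
\begin{itemize}
\item no radial Jacobi field decays;
\item the degree-one decaying mode $e^{-t}(av-v')$ has a nonvanishing trace factor;
\item high modes with zero trace vanish by integration by parts.
\end{itemize}
Together these give $J'=0$. Strictly, injectivity on $X^{\mathrm{hi},0}$ already kills $J'$ without the degree-one subtraction. In that case the $\boldsymbol c$ tangents are not needed for uniqueness, only the $\psi$ tangents.

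Let me check this for consistency. $L_I$ is an isomorphism $X^{\mathrm{hi},0}_\mu\to Y_\mu$, so $J$ is determined by its high trace. Hence $J=D_\psi I[\psi_J]$ alone, and the special conformal tangents must themselves be combinations of the $\psi$-tangents, since they decay and have some high trace. This is consistent, and the statement only requires a combination of both kinds.

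The main obstacle is the decay upgrade. I must show that $O(r^d)$ relative decay for arbitrarily small $d$ forces membership in $X_{\mu_1}$ with $\mu_1>1$, that is, additive decay faster than $r^{1-a}$. This needs the indicial-gap argument to hold for the perturbed potential $\mathcal T_{\mathbf a}U_B+Z$, not just the exact Delaunay one. I would handle it by treating $\kappa p(I^q-U^q)$ as an $O(r^{\mu_3})$-decaying perturbation and iterating the model end inverse. Each iteration gains a fixed amount of decay, until the weight passes $1$ and lands below $2$. Constants here may depend on the neck, as the statement allows.
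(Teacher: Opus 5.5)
There is a genuine gap at your decay upgrade, and it sits exactly where the special conformal tangents are needed.

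First, a weight mismatch. $X_\mu$ controls the additive field itself, $|J|\lesssim(r/\rho)^{\mu}$, which in cylindrical variables means $r^aJ=O(e^{-(a+\mu)t})$. Your bound $r^aJ=O(e^{-\mu t})$ would only give $J\in X_{\mu-a}$.

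More seriously, a decaying Jacobi field cannot in general be pushed past cylindrical rate $1$. In degree one the decaying solutions are $e^{-t}(av-v')\omega_j$, that is, the additive fields $r^{1-a}(av-v')\omega_j$, of additive order $1-a<\mu_1$. Differentiating \eqref{in:transform} at $\boldsymbol z=0$ gives $\partial_{\boldsymbol z}\mathcal T_{\boldsymbol z}U_B[\dot{\boldsymbol z}]=r^{1-a}(av-v')\,\dot{\boldsymbol z}\cdot\omega$. So these fields are precisely the special conformal generators. You noted yourself that the $\boldsymbol c$-tangents decay only at relative rate $1$, so they do not lie in $X_{\mu_1}$.

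Your ``consistency check'' therefore exhibits a contradiction, not consistency. Take $I=U_B$ (that is, $\boldsymbol z=0$ and $\psi=0$, so $W\equiv0$). Then $\partial_{\boldsymbol c}I=\rho^{-1}r^{1-a}(av-v')\,\dot{\boldsymbol c}\cdot\omega$ is a nonzero decaying Jacobi field with zero high trace. Every $\psi$-tangent, by contrast, lies in $X_{\mu_3}$. Hence $J$ is not determined by its high trace, injectivity on $X^{\mathrm{hi},0}_\mu$ says nothing about such fields, and your remark that the $\boldsymbol c$-tangents are unnecessary is false.

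Relatedly, reaching additive weight $\mu_1$ requires the first high-mode indicial root to exceed $a+\mu_1>a+1$. The coercivity bound only guarantees roughly $\sqrt{a+2}$, which is already below $a+1$ once $n\geq4$. You need the small-neck asymptotics of \cite[Section~5, Lemma~2]{MP1999}, which place that root near $a+2$.

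What is missing is the correct structure statement together with a normalization at the puncture rather than at the outer sphere. After undoing the transformation, set $j=r^a\widetilde J$; its forcing decays at rate $a+\mu_3+d$. Corollary~1 of \cite{KMPS1999} then gives $j\in\mathcal C^{2,\theta}_{a+\mu_1}\oplus E_{\varepsilon,1}$. The group law $\mathcal T_{\mathbf a+\mathbf z}=\mathcal T_{\mathbf a}\mathcal T_{\mathbf z}$, together with $\partial_{\boldsymbol c}W\in X_{\mu_3}$, shows that the $\boldsymbol c$-tangents of $I$ carry exactly these $E_{\varepsilon,1}$ leading terms. Subtracting them so as to cancel the asymptotic $E_{\varepsilon,1}$ coefficient puts the remainder in $X_{\mu_1}$. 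Only then should you subtract a $\psi$-tangent to remove the high trace and apply the contraction $J_0=-G_U[(L_I-L_U)J_0]$ on $X^{\mathrm{hi},0}_{\mu_1}$.

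Your choice to match the outer degree-one trace instead of the asymptotic coefficient would need a separate uniqueness result on $E_{\varepsilon,1}\oplus X^{\mathrm{hi},0}_{\mu_1}$ for the perturbed potential, where $Z$ and $\boldsymbol z\neq0$ couple the modes. You do not supply that argument.
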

\begin{proof}
The assumed regularity gives a $C^{2,\theta}$ boundary trace.
Since $I\asymp r^{-a}$ for the fixed positive neck, the relative
bound implies $J=O(r^{-a+d})$. Annular Schauder estimates for the
Jacobi equation give the corresponding scaled derivative bounds. Write $U=\mathcal T_{\mathbf a}U_B$.
For a positive conformal factor $f$, write $L_f=\Delta+\kappa p f^q$
for its additive Jacobi operator, as in \eqref{in:inverse}.
Undo the special conformal transformation on a smaller concentric ball
in the transformed domain. The pullback fixes the origin, has comparable
radial variables and a smooth nonvanishing conformal factor, and
preserves the strong additive weights.
On this ball the transformed solution and Jacobi field satisfy
$$
 \widetilde I=U_B+\widetilde Z,
 \qquad \widetilde Z\in C^{2,\theta}_{\mu_3},
 \qquad \widetilde J=O(r^{-a+d}),
$$
with scaled derivatives. In the following argument $\varepsilon>0$
denotes the fixed necksize, and $d>0$ denotes the decay exponent of
the field. Since $U_B\asymp r^{-a}$,
Taylor expansion gives
$$
 (L_{\widetilde I}-L_{U_B})\widetilde J
               =O(r^{\mu_3+d-2}).
$$
Set $j=r^a\widetilde J$ and $t=-\log r$ (up to an additive constant).
The radial cylindrical Jacobi equation is
$$
 P_\varepsilon j=f,\qquad
 P_\varepsilon=\partial_t^2+\Delta_{\mathbb S^{n-1}}
                   -a^2+\kappa p v^q,
 \qquad f=O(e^{-(a+\mu_3+d)t}),
$$
where $v=r^aU_B$ is a phase translate of the Fowler profile.
Since $\kappa p=n(n+2)/4$, this is the cylindrical normalization of
\cite[Section~2.2]{KMPS1999}.
Initially $j\in \mathcal C_{d_0}^{2,\theta}$ on the cylinder for any
$0<d_0<\min\{d,1\}$.

Choose $\gamma=a+\mu_1$, so $\gamma>1$ and $\gamma<a+2$.
By Lemma~2 of \cite[Section~5]{MP1999}, the first high-mode indicial
exponent $\gamma_{\varepsilon,n+1}$ has a lower bound arbitrarily close
to $a+2$ for small necks. Reducing the small-neck threshold in terms
of $n$ and $\mu_1$ gives $\gamma_{\varepsilon,n+1}>\gamma$. Since
$a+\mu_3+d>\gamma$, Corollary~1 of
\cite[Section~2.2]{KMPS1999} applies and gives
$$
 j\in \mathcal C_{\gamma}^{2,\theta}\oplus E_{\varepsilon,1}.
$$
Here $E_{\varepsilon,1}$ is the span of the stable first spherical
Jacobi fields. In additive Euclidean variables these are precisely
the fixed-center special conformal generators,
$r^{1-a}(av-v')\omega_j$. Hence $\widetilde J$ is a combination of
these generators plus a field in $X_{\mu_1}$. The initial positive decay excludes the two neutral radial fields
and the growing first-degree fields. The weight conversion is
$j\in \mathcal C_{\gamma}^{2,\theta}\Longleftrightarrow
\widetilde J\in X_{\gamma-a}$; with $\gamma=a+\mu_1$, the remainder
therefore belongs to $X_{\mu_1}$. All norms are finite on the remaining compact annulus.

Return through the conformal transformation. The special conformal
transformations satisfy the local group law
$\mathcal T_{\mathbf a+\mathbf z}
=\mathcal T_{\mathbf a}\mathcal T_{\mathbf z}$.
Consequently their parameter tangents have precisely the stable
first-degree leading terms found above.
Proposition~\ref{in:family} places the derivatives of the correction
in $X_{\mu_3}$. Subtracting suitable special conformal tangents of
$I$ therefore leaves a field in $X_{\mu_1}$, with positive relative
decay. Choose $\dot\psi$ so that
$B_*\dot\psi$ equals the remaining high trace, and subtract the
corresponding high-boundary tangent of $I$, which belongs to $X_{\mu_3}$.
The remainder $J_0$ is a strong Jacobi field with zero high trace.
The local strong inverse gives
$$
 J_0=-G_U\bigl[(L_I-L_U)J_0\bigr].
$$
The operator on the right has norm less than one on $X_{\mu_1}$ by the
small relative potential estimate, so $J_0=0$.
\end{proof}

\begin{proposition}\label{match:nondegenerate}
The matched finite-ended metric is marked nondegenerate.
\end{proposition}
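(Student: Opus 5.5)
Let $j$ be a global solution of $(\Delta_g+n)j=0$ on $\Sph^n\setminus(F\cup\{y_i\})$ that decays exponentially at every end, and put $J=U_1 j$, an additive Jacobi field for $U_1$ in flat coordinates. The plan is to restrict $J$ to each inner ball and to the perforated core, identify each restriction with a tangent vector of the corresponding solution family, and then use the derivative of the matching operator to force all tangents to vanish.

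\emph{Inner balls.} On each $B(y_i,\rho_i)$ the matched solution is an inner solution $I_i=\mathcal T_{\mathbf a_i}U_B+Z$ with $Z\in C^{2,\theta}_{\mu_3}$. The relative factor $J/I_i=j$ decays like $r^d$ at $y_i$, so Lemma~\ref{match:inner-kernel} applies. It gives
\[
J|_{B(y_i,\rho_i)}=\partial_{\mathbf c}I_i[\dot{\mathbf c}_i]+\partial_\psi I_i[\dot\psi_i],
\]
with $\dot b_i=0$. We set $\dot{\mathbf p}=((0,\dot{\mathbf c}_i,\dot\psi_i))_i\in\cE_N$.

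\emph{Exterior piece.} On the old tails, $J$ has zero radial coefficients and positive exponential decay. Proposition~\ref{old:graph} therefore represents it through the derivative of the boundary graph, so $J$ satisfies the linearized old interface relation on $\Sigma$. On $D_N$ consider $J-D_{\mathbf p}E_{\mathbf p}[\dot{\mathbf p}]$. By construction, $E_{\mathbf p}$ and $I_i$ have equal Dirichlet traces on $\Gamma_i$ for every $\mathbf p$. Differentiating this identity shows that the difference has zero full trace on the new spheres. Its old radial coefficients lie in $\mathcal W'$: those of $J$ are zero, and those of the tangent lie in $\mathcal W'$ by construction. Its decay on the eliminated tails is at least $d_-$. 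The kernel statement at the end of Lemma~\ref{ext:family} then gives $J=D_{\mathbf p}E_{\mathbf p}[\dot{\mathbf p}]$ on $D_N$.

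\emph{Matching.} Since $J$ is $C^1$ across each $\Gamma_i$, its normal derivatives from the two sides agree. Because $\cM$ is defined by \eqref{eq:mismatch} with the frozen $B_i$ and $\rho_i$, this says exactly $D\cM(\mathbf p)[\dot{\mathbf p}]=0$ at the fixed point. Lemma~\ref{match:estimate} together with the choice of $\eta_*$ gives $\|D\cM-L\|<1/(4\|L^{-1}\|)$, and Lemma~\ref{match:linear} makes $L$ invertible. Hence $D\cM$ is injective, so $\dot{\mathbf p}=0$. Then $J=0$ on every inner ball and on $D_N$, so $j=0$ and the matched solution is marked nondegenerate.

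\emph{Main obstacle.} The delicate step is justifying that $J$ lies in the spaces where the uniqueness statements apply. On the inner side we need $J\in C^{2,\theta}$ up to $\Gamma_i$ with the stated relative decay; on the exterior side we need finite $X_W$ norms and decay at rate $d_-$ on the tails. Both follow from local elliptic regularity and from the exponential decay assumed in the definition of marked nondegeneracy. A second point to check is that the $b$-direction is genuinely absent from the inner representation: the neck-variation field does not decay, so positive decay of $j$ excludes it. This is what allows the tangent vector $\dot{\mathbf p}$ to have $\dot b_i=0$ while still being tested against the full operator $D\cM$.
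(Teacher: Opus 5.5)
Your proposal is correct and follows the paper's proof essentially step for step. Lemma~\ref{match:inner-kernel} represents the field on each inner ball as a tangent with $\dot b_i=0$. Proposition~\ref{old:graph}, together with the kernel statement of Lemma~\ref{ext:family}, identifies it with $D_{\mathbf p}E_{\mathbf p}[\dot{\mathbf p}]$ on the perforated core. Then $D\cM\,\dot{\mathbf p}=0$ and the invertibility from Lemmas~\ref{match:linear} and~\ref{match:estimate} force $\dot{\mathbf p}=0$.

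The paper also records two details you leave implicit, neither of which changes the argument. It rescales the physical coefficients by the positive factors $\rho_i$ and $B_i^{-1}$ to place $\dot{\mathbf p}$ in $\cE_N$. It obtains the decay rate $d_-$ on the old tails from the absence of indicial exponents in $(0,1)$, as noted at the end of the proof of Proposition~\ref{old:graph}, rather than from elliptic regularity alone.
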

\begin{proof}
Let $J$ be a global marked Jacobi field. Its smoothness at the matching
spheres allows Lemma~\ref{match:inner-kernel} to represent it at every
new puncture as an inner tangent with $\dot b_i=0$. Passing from physical coefficients
to \eqref{eq:parameter-space} multiplies the special conformal
coefficient by $\rho_i$ and divides the high trace by $B_i$.
Since these finitely many factors are positive, this defines
$\dot{\mathbf p}\in\cE_N$. No uniform bound on the representation coefficients
is needed for this kernel argument.

At each old end the radial coefficients of a marked field are zero.
It therefore satisfies the current common gauge and the lower-weight
boundary graph of Proposition~\ref{old:graph}. Because $E_{\mathbf p}|_{\Gamma_i}=I_i(\mathbf p)|_{\Gamma_i}$ for all parameters,
$D_{\mathbf p}E_{\mathbf p}[\dot{\mathbf p}]$ has exactly the same full new trace as $J$.
The old radial coefficient vector of this exterior tangent belongs to
$\mathcal W'$, and its physical remainder has decay $d_-$ by
Proposition~\ref{old:graph}. Since the old radial coefficients of $J$
vanish, their difference belongs to the same current Jacobi space
and has zero full trace on the new spheres. Exterior uniqueness in
Lemma~\ref{ext:family} therefore makes it zero. The vanishing global
normal jump then says $D\cM\dot{\mathbf p}=0$. The bound in
\eqref{eq:mismatch-estimates} makes $D\cM$ invertible; hence
$\dot{\mathbf p}=0$, and all restrictions of $J$ vanish.
\end{proof}

\subsubsection{Order of choices and lower bounds}
\begin{proof}[Proof of Theorem~\ref{thm:insertion}]
The preceding matching argument and
Proposition~\ref{match:nondegenerate} give a smooth positive
marked nondegenerate solution. It remains to verify the uniform
order of choices, lower bound, and approximation statements.

Fix
the universal data, the three weights, and the parameter-ball radius
$R$ determined above. Choose $\eta_*$ from the local inverse thresholds, the high-mode
indicial bound in Lemma~\ref{match:inner-kernel},
inner Cauchy estimates, exterior contraction, and matching estimates.
Decrease $\eta_*$ so that $\eta_*\leq1$ and
$\eta_*^{2/(n+2)}<1/4$.
Fix $\eta\leq\eta_*$, then choose the old cuts, the regular infinity
cap, the two old weights, the common complement, and the old inverse
constants. Choose the fixed buffers, $r_0$, and $A_0$ next.
Reduce the maximal radius to meet the old jet bounds and
the localized Poisson comparison estimates, and impose
$10r_{\max}<\dist(K,\partial D_{\rm pre})$ on the prescribed core.
If $K$ is empty, only the trivial empty insertion is needed. Finally reduce $Q$ to
meet the old spectral, boundary-graph, and smoothing estimates.
The choices are compatible: $b=Q^{1/n}\to0$ and
$Q/b=Q^{1-1/n}\to0$, while the capacity estimate at the end of
Section~\ref{old:section} gives $\sum_i\rho_i^{2a}\to0$.
On the core the matched factor is $S_{\mathbf p}+P+v$ and
$|P+v|\leq C\delta(h+b U_0)$. The transformed bubbles are
uniformly comparable with $h_i$, and $S_{\mathbf p}\asymp U_0+h$.
For small universal $\delta$ and small old-dependent $b$ this
gives \eqref{eq:core-lower}. The own term gives
\eqref{eq:shell-lower}. The inner estimates imply complete,
positive, noncylindrical new ends; the old graph preserves the old
positive necks.

On a protected compact set disjoint from the new centers, all bubble
jets are $O(Q)$, and the correction bounded by $C\delta\Phi$ tends to zero with $b$.
Elliptic bootstrap gives convergence in every prescribed finite
smoothness norm. On an old tail the compact set occupies a finite
cylindrical interval, where the period-aligned family is continuous
in these norms. The compact Dirichlet implicit function theorem gives
the same conclusion on the regular cap.
\end{proof}

\section{Passage to a complete limit}
\label{sec:limit}

We now apply Theorem~\ref{thm:insertion} to construct a complete limit.
The argument uses summable
perturbations on compact sets, as in \cite[Theorem~4.1]{MPU1995}, and
cutoff functions whose transition regions remain disjoint throughout
the construction. The gluing parameter $\eta$ is fixed before the
induction; the permissible radii and total Newton coefficient may depend on the
solution at the preceding step.

\subsection{The induction}

We prove Theorem~\ref{thm:main}.

\begin{proof}[Proof of Theorem~\ref{thm:main}]
Choose $\dim_{\mathrm H}\Lambda<\beta<a$, fix the packing constant
from Lemma~\ref{pot:cover}, and fix
$0<\eta\leq\min\{\eta_*,1\}$. Denote by $c_0\in(0,1]$ and $c_1>0$
the constants in \eqref{eq:core-lower} and \eqref{eq:shell-lower},
respectively. All constants in the length estimates below are
independent of the step.

Start with a noncylindrical Delaunay metric with singularities at two
chosen points of $\Lambda$, transported by a conformal automorphism
of the sphere. This metric is marked nondegenerate: its radial Jacobi
fields are neutral, the first spherical modes grow at one end, and
the higher modes are coercive; see \cite[Section~II, Lemma~2.21]{MPU1996}
and \cite[Section~II]{MPU1995}. If $\Lambda$ is finite, insert its
remaining points by successive single-ball applications of the finite
gluing statement. The resulting finite-ended metric is complete,
and Lemma~\ref{pot:oldrepresentation} gives the asserted integrability
and global equation. Henceforth assume that $\Lambda$ is infinite.

Fix a stereographic pole outside $\Lambda$. All balls used in the
construction are Euclidean balls in this chart, whereas
$N_\sigma(\Lambda)$ denotes a round-metric neighborhood. Choose a
compact exhaustion $L_j\Subset\Omega$ with
$L_j\subset\operatorname{int}L_{j+1}$, and choose $\epsilon_j\in(0,1)$
so that
\begin{equation}\label{eq:product-budget}
 \sum_{j=1}^{\infty}\epsilon_j<\frac14.
\end{equation}
Thus every finite or infinite product of factors $1-\epsilon_j$ is
greater than $3/4$.

Suppose that $g_{j-1}=u_{j-1}^{2/a}g_{\mathrm{rd}}$ is a marked
nondegenerate finite-ended solution with singular set
$F_{j-1}\subset\Lambda$, and that compact sets
$W_1,\ldots,W_{j-1}\Subset\Omega$ have been constructed. Choose
$\sigma_j<2^{-j}$, decreasing with $j$, so that
$\overline{N_{\sigma_j}(\Lambda)}$ misses
$L_j\cup\bigcup_{\ell<j}W_\ell$.

For every $z\in F_{j-1}$, choose an outer radius $b_z$ such that the
closed balls $\overline{B(z,b_z)}$ are pairwise disjoint, lie in
$N_{\sigma_j}(\Lambda)$, and satisfy
\begin{equation}\label{eq:old-end-bound}
 U_{j-1}(x)\geq m_z|x-z|^{-a}
 \qquad (0<|x-z|\leq b_z)
\end{equation}
for some $m_z>0$. This follows from the positive Delaunay profile of
each end. Choose $a_z\in(0,b_z)$ and a smooth logarithmic cutoff
$f_z$, equal to one for $|x-z|\leq a_z$ and zero for
$|x-z|\geq b_z$, with
$$
 |D f_z|\leq\frac{C}{|x-z|\log(b_z/a_z)}.
$$
By decreasing $a_z$, arrange
\begin{equation}\label{eq:old-cutoff-gradient}
 |\dd f_z|_{g_{j-1}}\leq\frac{c_0^{1/a}}{\eta}
 \qquad\text{on the transition annulus}.
\end{equation}
Indeed, \eqref{eq:old-end-bound} bounds this gradient by
$C/(m_z^{1/a}\log(b_z/a_z))$.

Choose the old end sections strictly inside $B(z,a_z/2)$, and choose
the regular cap at the stereographic pole away from the logarithmic
annuli and the compact set
$$
 E_j=\Lambda\setminus\bigcup_{z\in F_{j-1}}B(z,a_z).
$$
The resulting truncated core contains these annuli and $E_j$ in its
interior. Specify $K=E_j$ before selecting the radius and coefficient
bounds in the finite gluing statement.

Since $\Hh^\beta(E_j)=0$, Lemma~\ref{pot:cover} gives finitely many
disjoint balls $B(y_i,r_i)$, with centers $y_i\in E_j$, whose
open fivefold enlargements cover $E_j$. Their tenfold enlargements
can be required to lie in the interior of the old core and in
$N_{\sigma_j}(\Lambda)$. Their maximum radius and total $\beta$-cost
can be made arbitrarily small. Equation~\eqref{pot:charge}
therefore makes $Q$ satisfy all prescribed coefficient bounds.

Apply the finite gluing statement, requiring in addition that
\begin{align}
 \|u_j-u_{j-1}\|_{C^j(L_j)}&\leq2^{-j},
       \label{eq:absolute-budget}\\
 |u_j-u_{j-1}|&\leq\epsilon_j u_{j-1}
       &&\text{on }L_j\cup\bigcup_{\ell<j}W_\ell.
       \label{eq:relative-budget}
\end{align}
These are finitely many compact approximation requirements; the
relative bounds are possible because $u_{j-1}$ has a positive minimum
on each protected compact set. The output has singular set
$F_j=F_{j-1}\cup\{y_i\}$ and remains marked nondegenerate with
noncylindrical ends. If $E_j=\varnothing$, take $u_j=u_{j-1}$.
By Lemma~\ref{pot:oldrepresentation}, every finite-step factor $u_j$
belongs to $L^p(\Sph^n)$ and satisfies the global distributional
equation.

For every new ball choose a smooth cutoff $\chi_i$, equal to one on
$B(y_i,5r_i)$ and zero outside $B(y_i,10r_i)$, with $|D\chi_i|\leq C/r_i$.
Extend all the cutoffs smoothly over the sphere, and set
\begin{equation}\label{eq:barrier-definition}
 f_j=\max\bigl(\{f_z:z\in F_{j-1}\}\cup\{\chi_i\}_i\bigr),
 \qquad W_j=\{1/4\leq f_j\leq3/4\}.
\end{equation}
The old inner balls and new open fivefold balls cover $\Lambda$.
Consequently, $f_j=1$ on an open neighborhood $V_j$ of $\Lambda$,
and both $\Sph^n\setminus V_j$ and $W_j$ are compact subsets of
$\Omega$. Since $\supp f_j\subset N_{\sigma_j}(\Lambda)$, the later
supports miss $W_j$; hence the sets $W_j$ are pairwise disjoint.

At a point of $W_j$, every new fivefold ball is avoided. If a new
cutoff $\chi_i$ is active in the maximum, its point lies in $B(y_i,10r_i)$,
and \eqref{eq:shell-lower} gives
$$
 U_j^{1/a}\geq c_1^{1/a}\frac{\eta}{r_i},
 \qquad |\dd\chi_i|_{g_j}\leq\frac{C}{\eta}.
$$
An active old cutoff lies in its logarithmic annulus, in the old core.
Every point of $W_j$ lies outside all the balls $B(y_i,5r_i)$,
and hence outside all the matching balls $B(y_i,\rho_i)$. Thus
\eqref{eq:core-lower} and \eqref{eq:old-cutoff-gradient} give
$|\dd f_z|_{g_j}\leq1/\eta$. The same estimate holds when no new
balls are inserted. Every active cutoff on $W_j$ therefore has
metric gradient bounded by $C/\eta$.

The bounds \eqref{eq:absolute-budget} imply convergence to a smooth
function $u\geq0$ locally on $\Omega$: for any fixed $m,k$, the
$C^k(L_m)$ tail is summable. Passing to the local equations yields
\begin{equation}\label{eq:local-limit-equation}
 \mathcal A_{\mathrm{rd}}u=\kappa u^p\qquad\text{on }\Omega.
\end{equation}
For $x\in L_m$, \eqref{eq:relative-budget} gives
$$
 u(x)\geq u_m(x)\prod_{j>m}(1-\epsilon_j)>0.
$$
Hence $g=u^{2/a}g_{\mathrm{rd}}$ is a smooth metric with scalar
curvature $n(n-1)$. The same product estimate on the protected sets
gives
\begin{equation}\label{eq:protected-limit}
 u\geq\frac34u_j\qquad\text{on }W_j.
\end{equation}

Each $f_j$ is the finite maximum of smooth functions and is therefore
locally Lipschitz. Along a locally absolutely continuous curve,
its derivative agrees almost everywhere with that of an active
cutoff. Together with \eqref{eq:protected-limit}, the gradient
estimates give
\begin{equation}\label{eq:curve-cutoff}
 |(f_j\circ\gamma)'(t)|
 \leq\frac{C'}{\eta}|\dot\gamma(t)|_g
 \quad\text{for almost every }t\text{ with }\gamma(t)\in W_j,
\end{equation}
where $C'$ is independent of $j$.

Let $\gamma$ be a locally absolutely continuous curve leaving
every compact subset of $\Omega$. For every sufficiently large
$j$, its starting point is outside $\supp f_j$, whereas it is
eventually in $V_j$, because $\Sph^n\setminus V_j\Subset\Omega$.
Thus it crosses $W_j$ from level $1/4$ to level $3/4$.
Integrating \eqref{eq:curve-cutoff} gives
$$
 \int_{\gamma^{-1}(W_j)}|\dot\gamma(t)|_g\,\dd t
       \ge\frac{\eta}{2C'}.
$$
The $W_j$ are pairwise disjoint, so every such curve has infinite
length. The divergent-curve criterion for Riemannian completeness
therefore proves that $g$ is complete.
Finally, $\Hh^a(\Lambda)=0$, so Proposition~\ref{prop:no-defect}
below gives the global equation and the asserted $L^p$ integrability.
\end{proof}

\subsection{Global convergence of the nonlinear sources}

The following argument is independent of the gluing construction.

\begin{proposition}\label{prop:no-defect}
Let $\Lambda\subset\Sph^n$ be closed, with $\Hh^a(\Lambda)=0$.
Suppose that $u_j>0$ belong to $L^p(\Sph^n)$, satisfy
$\mathcal A_{\mathrm{rd}}u_j=\kappa u_j^p$ in $\mathcal D'(\Sph^n)$, and converge smoothly
locally on $\Sph^n\setminus\Lambda$ to $u$. Then
$$
 u_j^p\longrightarrow u^p\quad\text{in }L^1(\Sph^n),
 \qquad
 u_j\longrightarrow u\quad\text{in }L^p(\Sph^n),
$$
and $\mathcal A_{\mathrm{rd}}u=\kappa u^p$ globally in distributions. Moreover, the source
measures $\mu_j=\kappa u_j^p\,\dd V_{g_{\mathrm{rd}}}$ satisfy
\begin{equation}\label{eq:source-morrey}
 \mu_j(B_{g_{\mathrm{rd}}}(x,r))\leq C_n r^a
 \qquad(0<r\leq r_0),
\end{equation}
where $C_n,r_0>0$ depend only on $n$. In particular, no additional
limiting source measure is supported on $\Lambda$.
\end{proposition}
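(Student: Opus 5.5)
The plan is to prove the Morrey bound \eqref{eq:source-morrey} first, uniformly in $j$. It should come from the distributional equation alone, through a test-function argument; the limit statements then follow by a covering argument.

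Fix $x\in\Sph^n$ and $0<r\le r_0$, and work in a normal or stereographic chart around $x$, in which $\mathcal A_{\mathrm{rd}}$ is a uniformly elliptic operator with smooth coefficients. Take a cutoff $\zeta$ equal to one on $B(x,r)$, supported in $B(x,2r)$, with $|\nabla\zeta|\le C/r$ and $|\nabla^2\zeta|\le C/r^2$. The idea is to test the equation against $\zeta^{m}u_j^{-s}$ for a suitable exponent, or more simply against $\zeta^m$ itself, and combine the result with Hölder. Testing with $\varphi=\zeta^m$, $m$ large, gives
\begin{equation*}
 \kappa\int u_j^p\zeta^m=\int u_j\,\mathcal A_{\mathrm{rd}}(\zeta^m)\le C r^{-2}\int_{B(x,2r)}u_j\zeta^{m-2}+C\int u_j\zeta^m .
\end{equation*}
Since $\mathcal A_{\mathrm{rd}}\zeta^m$ is controlled by $\zeta^{m-2}$ times $r^{-2}$, Hölder with exponents $p$ and $p'$ gives
\begin{equation*}
 \int u_j\zeta^{m-2}\le\Bigl(\int u_j^p\zeta^{(m-2)p}\Bigr)^{1/p}|B_{2r}|^{1/p'} .
\end{equation*}
With $m$ chosen so that $(m-2)p\ge m$, the source integral absorbs: $X:=\int u_j^p\zeta^m$ satisfies $X\le Cr^{-2}X^{1/p}r^{n/p'}+Cr^{n/p'}X^{1/p}$. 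Solving yields
\begin{equation*}
 X\le C\bigl(r^{-2+n(p-1)/p}\bigr)^{p'}=C r^{n-2p'},
\end{equation*}
and with $p'=(n+2)/4$ this exponent is $n-(n+2)/2=a$. This is exactly \eqref{eq:source-morrey}, with constants depending only on $n$; the round curvature term is harmless for $r\le r_0$. The step that needs care is justifying the test function, since $u_j$ is only in $L^p$ and positive. The distributional equation does allow smooth test functions, and $u_j\in L^p$ makes all integrals finite, so the absorption is legitimate.

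Next comes convergence. Since $\Hh^a(\Lambda)=0$ and $\Lambda$ is compact, for any $\epsilon>0$ I cover $\Lambda$ by finitely many balls $B(x_k,r_k)$ with $r_k\le r_0$ and $\sum r_k^a<\epsilon$. Their union $N$ is a neighborhood of $\Lambda$, and \eqref{eq:source-morrey} gives $\sup_j\int_N u_j^p\le C_n\epsilon$. On the compact set $\Sph^n\setminus N$, $u_j\to u$ uniformly, so $\int_{\Sph^n\setminus N}|u_j^p-u^p|\to0$. By Fatou, $\int_N u^p\le C_n\epsilon$. Together these give $u_j^p\to u^p$ in $L^1$, and in particular $u\in L^p$. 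The convergence $u_j\to u$ in $L^p$ then follows from a Vitali or Brezis--Lieb type argument, using the uniform integrability of $|u_j-u|^p\le 2^{p-1}(u_j^p+u^p)$ and a.e. convergence.

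Finally, I pass to the limit in $\int u_j\mathcal A_{\mathrm{rd}}\varphi=\kappa\int u_j^p\varphi$ using the $L^p$ (hence $L^1$) and $L^1$ convergences, which gives the global equation. The statement that no defect measure sits on $\Lambda$ is just the $L^1$ convergence of the sources. I expect the main obstacle to be the exponent bookkeeping in the absorption step, namely making sure the power of $\zeta$ closes and that the lower-order term from $\kappa$ and the metric does not spoil the scaling for small $r$.
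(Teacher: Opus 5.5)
Your proposal is correct and follows essentially the paper's proof: test with $\psi^m$ with $m\ge 2p'$, absorb via H\"older to get the Morrey bound $Cr^{n-2p'}=Cr^a$, then use a finite cover of $\Lambda$ with small $\sum r_k^a$ together with uniform convergence off the cover to get $L^1$ convergence of the sources. The differences are cosmetic: you obtain $u_j\to u$ in $L^p$ by a Vitali/generalized dominated convergence argument instead of the elementary inequality $|s-t|^p\le|s^p-t^p|$, and you deduce $u\in L^p$ from the cover and Fatou rather than from testing the equation with the constant function $1$.
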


\begin{proof}
Define $u$ arbitrarily on the measure-zero set $\Lambda$.

For the local estimate, we use the test-function method of
Mitidieri and Pohozaev; see \cite[Section~1, equations~(1.2)--(1.5)]{MitidieriPohozaev2004},
with numbering from their March 2004 manuscript. Put
$p'=p/(p-1)=(n+2)/4$ and choose an integer $m>2p'$.
For $B=B_{g_{\mathrm{rd}}}(x,r)$, $0<r\leq r_0\leq1$, choose
$0\leq\psi\leq1$, equal to one on $B$, supported in $2B$, with
$|\nabla\psi|\leq C_n/r$ and $|\Delta\psi|\leq C_n/r^2$.
Then $\varphi=\psi^m$ satisfies
$$
 |\mathcal A_{\mathrm{rd}}\varphi|\leq C_n r^{-2}\psi^{m-2}
             \leq C_n r^{-2}\varphi^{1/p}.
$$
Writing $X_j=\int u_j^p\varphi\,\dd V_{g_{\mathrm{rd}}}$, the equation and H\"older's
inequality give
$$
 \kappa X_j=\int u_j\mathcal A_{\mathrm{rd}}\varphi\,\dd V_{g_{\mathrm{rd}}}
 \leq C_n r^{-2}X_j^{1/p}
           \operatorname{Vol}(2B)^{1/p'}.
$$
If $X_j>0$, divide by $X_j^{1/p}$ and raise to the power $p'$;
the zero case is immediate. Since $n-2p'=a$, this yields
$$
 \int_Bu_j^p\,\dd V_{g_{\mathrm{rd}}}\leq X_j\leq C_n r^a,
$$
which proves \eqref{eq:source-morrey} in every dimension $n\geq3$.
Testing with one gives $\int u_j^p=\int u_j$; H\"older's
inequality implies $\int u_j^p\le\operatorname{Vol}(\Sph^n)$.
Fatou's lemma therefore yields $u\in L^p(\Sph^n)$.

Given $\varepsilon>0$, cover $\Lambda$ by finitely many open round
balls $B(x_i,r_i)$, with $r_i\leq r_0$ and
$\sum_i r_i^a<\varepsilon$, and let $O$ be their union.
The local estimate gives
$$
 \kappa\int_Ou_j^p\,\dd V_{g_{\mathrm{rd}}}\leq C_n\varepsilon
 \qquad\text{for every }j,
$$
and Fatou's lemma gives the same bound for $u^p$.
On $\Sph^n\setminus O\Subset\Sph^n\setminus\Lambda$, the sources
converge uniformly. Therefore
$$
 \limsup_{j\to\infty}\kappa
 \int_{\Sph^n}|u_j^p-u^p|\,\dd V_{g_{\mathrm{rd}}}\leq2C_n\varepsilon.
$$
Letting $\varepsilon\downarrow0$ proves strong $L^1$ convergence.

For $s,t\geq0$, $|s-t|^p\leq|s^p-t^p|$, since
$(t+h)^p\geq t^p+h^p$ for $h\geq0$. Thus
$$
 \|u_j-u\|_{L^p}^p\leq\|u_j^p-u^p\|_{L^1}\longrightarrow0.
$$
The factors also converge in $L^1$, so for every
$\varphi\in C^\infty(\Sph^n)$ we can pass to the limit in
$$
 \int_{\Sph^n}u_j\mathcal A_{\mathrm{rd}}\varphi\,\dd V_{g_{\mathrm{rd}}}
 =\kappa\int_{\Sph^n}u_j^p\varphi\,\dd V_{g_{\mathrm{rd}}}.
$$
This proves the global distributional equation. Strong $L^1$
convergence of the sources excludes an additional measure on
$\Lambda$.
\end{proof}

\subsection*{Declaration of competing interest}
	The authors declare that they have no known competing financial interests in this paper.
	
\subsection*{Data availability}
	No data were used for the research described in the article.
	
\subsection*{On the use of AI}
ChatGPT 6 was employed to discuss potential ideas. Kimi K3 was used for grammatical refinement, spelling correction, and suggestions on the phrasing of section and lemma headings. The authors take full responsibility for this work.

\end{document}